\renewcommand{\>}{\rangle}
\renewcommand{\a}{\alpha}
\renewcommand{\b}{\beta}
\renewcommand{\d}{\delta}
\newcommand{\e}{\varepsilon}
\newcommand{\g}{\gamma}
\renewcommand{\l}{\lambda}
\newcommand{\s}{\sigma}
\renewcommand{\th}{\theta}
\renewcommand{\o}{\omega}
\renewcommand{\O}{\Omega}
\renewcommand{\S}{\Sigma}
\newcommand{\U}{\Upsilon}
\newcommand{\cA}{{\mathcal A}}
\newcommand{\cB}{{\mathcal B}}
\newcommand{\cC}{{\mathcal C}}
\newcommand{\cD}{{\mathcal D}}
\newcommand{\cE}{{\mathcal E}}
\newcommand{\cH}{{\mathcal H}}
\newcommand{\cK}{{\mathcal K}}
\newcommand{\cL}{{\mathcal L}}
\newcommand{\cP}{{\mathcal P}}
\newcommand{\bZ}{{\mathbb Z}}
\newcommand{\bR}{{\mathbb R}}
\newcommand{\bC}{{\mathbb C}}
\newcommand{\Bn}{{\mathcal B^n}}
\newcommand{\Hn}{{\mathcal H^n}}
\newcommand{\Hnk}{{\mathcal H^{k+n}}}
\newcommand{\Hk}{{\mathcal H^{|k|}}}
\newcommand{\Onk}{{\O^n_k}}
\newcommand{\Ukx}{U^{|k|}_x}
\newcommand{\Unk}{U^{k+n}}
\newcommand{\Lat}{{Latr\'emoli\`ere}}
\newtheorem{thm}{Theorem}[section]
\newtheorem{cor}[thm]{Corollary} 
\newtheorem{prop}[thm]{Proposition} 
\newtheorem{keylemma}[thm]{Key Lemma} 
\newtheorem{lem}[thm]{Lemma} 
\theoremstyle{definition}   
\newtheorem{defn}[thm]{Definition} 
\newtheorem{exam}[thm]{Example} 
\newtheorem{notation}[thm]{Notation} 
\numberwithin{equation}{section}
\title
[Vector bundles for matrix algebras]
{Vector bundles for ``Matrix algebras converge to the sphere''  
}
\author{Marc A. Rieffel}
\address{Department of Mathematics \\
University of California \\ Berkeley, CA 94720-3840}
\email{rieffel@math.berkeley.edu}
\thanks{This work is part of the project supported by the grant H2020-MSCA-RISE-2015-691246-QUANTUM DYNAMICS-3542/H2020/2016/2.
}
\dedicatory{In celebration of   
the seventieth birthday of Alain Connes}
\subjclass[2010]{Primary 46L87; 
Secondary 58B34, 81R05, 81R15 
}
\keywords{C*-metric space, 
quantum Gromov-Hausdorff distance, vector bundles,
projective modules, sphere, matrix algebras}
\begin{document}

\begin{abstract}
In the high-energy quantum-physics literature, one finds 
statements such as ``matrix algebras converge to the sphere''.
Earlier I provided a general precise setting for understanding such
statements, in which the matrix algebras are viewed as
quantum metric spaces, and convergence is with respect
to a quantum Gromov-Hausdorff-type distance. 

But physicists want even more to treat structures on spheres (and other spaces), such as vector bundles, Yang-Mills functionals, Dirac operators, etc., and they want to approximate these by corresponding structures on matrix algebras. In the present paper we treat this
idea for vector bundles. We develop a general precise way for understanding how,
for two compact quantum metric spaces that are close together, to a given vector bundle on one of them there can correspond in a natural way a unique vector bundle on the other. 
We then show explicitly how this works 
for the case of matrix algebras converging to the
2-sphere.
\end{abstract}

\maketitle
\allowdisplaybreaks



\tableofcontents


\section{Introduction}
\label{secintro}

In several earlier papers \cite{R7, R25, R21, R29} I showed how to give 
a precise meaning to statements in the literature of high-energy
physics and string theory of the kind ``matrix algebras 
converge to the sphere''. (See the references to the quantum physics
literature given in 
\cite{R7, R17, Stn2, ChZ, DiG, AHI, AcD}.) I did this by introducing
and developing a concept of ``compact quantum metric spaces'', 
and a corresponding quantum Gromov-Hausdorff-type distance 
between them. The compact quantum spaces are unital C*-algebras,
and the metric data is given by equipping the
algebras with suitable seminorms that play the role of the usual Lipschitz
seminorms on the algebras of continuous functions on ordinary 
compact metric spaces. The natural setting for ``matrix algebras 
converge to the sphere'' is that of coadjoint orbits of compact
semi-simple Lie groups, as shown in \cite{R7, R21, R29}.

But physicists need much more than just the algebras. They need vector
bundles, gauge fields, Dirac operators, etc. In the present
paper I provide a general method for giving
precise quantitative meaning to statements in the physics literature of the kind
``here are the vector bundles over the matrix algebras that
correspond to the monopole bundles on the sphere''
\cite{GKP, BBV, Vlt1, Ydr, GRSt, BKV, Stn, Vlt2, Imn, DFHL, Stn2}. 
I then apply this method to the case
of the 2-sphere, with full proofs of convergence. 
Many of the considerations in this paper apply
directly to the general case of coadjoint orbits. But some of the
detailed estimates needed to prove convergence require
fairly complicated considerations (see Section \ref{core2}) 
concerning highest weights for representations of
compact semi-simple Lie groups. It appears to me
that it would be quite challenging to carry
out those details for the general case, though I expect that
some restricted cases, such as matrix-algebra approximations
for complex projective spaces \cite{CSW, Ydr, MrS, CHMO,HlW}, 
are quite feasible to deal with.

In \cite{R17} I studied the convergence of ordinary 
vector bundles on ordinary compact metric spaces for the ordinary 
Gromov-Hausdorff distance. The approach that worked for me
was to use the correspondence between vector bundles
and projective modules (Swan's theorem \cite{Blk}), and
by this means represent 
vector bundles by corresponding projections
in matrix algebras over the algebras of continuous functions
on the compact metric spaces; and then to prove appropriate
convergence of the projections. In the present paper we
follow that same approach, in which now we also
consider projective modules over the matrix
algebras that converge to the 2-sphere, 
and thus also projections in matrix algebras
over these matrix algebras.

For this purpose, one needs
Lipschitz-type seminorms on all of the matrix algebras over the
underlying algebras, with these seminorms coherent in the
sense that they form a ``matrix seminorm'' . In my
recent paper \cite{R29} the theory of
these matrix seminorms 
was developed, and properties 
of such matrix seminorms for the setting of coadjoint orbits were
obtained. In particular, some general methods 
were given for obtaining estimates
related to how these matrix seminorms mesh with an appropriate
quantum analog of the Gromov-Hausdorff distance. The results of that paper
will be used here.

Recently \Lat \ introduced an improved version of quantum
Gromov-Hausdorff distance \cite{Ltr2}, which he calls 
``quantum Gromov-Hausdorff propinquity''. 
In \cite{R29} I
showed that his propinquity works very well 
for our setting of coadjoint orbits,
and so propinquity is the form of quantum Gromov-Hausdorff
distance that we use in the present paper. 
\Lat \ defines his propinquity
in terms of an improved version of the ``bridges'' that I had used
in my earlier papers. For our matrix seminorms we need
corresponding ``matricial bridges''. In \cite{R29}  
natural such matricial
bridges were constructed for the setting of coadjoint orbits.
They will be used here.

To give somewhat more indication
of the nature of our approach, we give now an imprecise version
of one of the general theorems that we apply.
For simplicity of
notation, we express it here only for the C*-algebras, rather than for
matrix algebras over the C*-algebras as needed later.
Let $(\cA, L^\cA)$ and $(\cB, L^\cB)$ be compact quantum 
metric spaces, where $\cA$ and $\cB$ are unital C*-algebras
and $L^\cA$ and $L^\cB$ are suitable seminorms on them. 
Let
$\Pi$ be a bridge between $\cA$ and $\cB$. Then 
$L^\cA$ and $L^\cB$ can be used to measure 
$\Pi$. We denote the resulting length 
of $\Pi$ by $l_\Pi$.
Then we will see, imprecisely speaking, that $\Pi$
together with $L^\cA$ and $L^\cB$ determine a
suitable seminorm, $L_\Pi$, on  $\cA \oplus \cB$.
 
\begin{thm} [Imprecise version of Theorem \ref{th4.5}]
Let $(\cA, L^\cA)$ and $(\cB, L^\cB)$ be compact quantum 
metric spaces,
and let $\Pi$ be a bridge between $\cA$ and $\cB$, with
corresponding  seminorm $L_\Pi$
on  $\cA \oplus \cB$. Let $l_\Pi$ be the length of $\Pi$ as measured
using $L^\cA$ and $L^\cB$. Let $p \in \cA$ and $q \in \cB$ be
projections. If \ 
$
l_\Pi L_\Pi(p,q) < 1/2   , 
$ \
and if $q_1$ is another projection in $\cB$ such that \ 
$
l_\Pi L_\Pi(p,q_1) < 1/2   , 
$ \ 
then there is a continous path, $t \to q_t$ of projections in $\cB$
going from $q$ to $q_1$, so that the projective modules corresponding
to $q$ and $q_1$ are isomorphic. In this way, 
to the projective $\cA$-module
determined by $p$ we have associated a 
uniquely determined isomorphism
class of projective $\cB$-modules.
\end{thm}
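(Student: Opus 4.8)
The plan is to reduce the entire statement to the single classical fact that two projections in a unital C*-algebra lying at C*-norm distance strictly less than $1$ are joined by a norm-continuous path of projections, and hence determine isomorphic finitely generated projective modules. Thus the whole problem becomes: show that the two hypotheses force $\|q - q_1\|_\cB < 1$.

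First I would prove the key estimate
\[
\|q - q_1\|_\cB \ \le\ l_\Pi L_\Pi(p, q) + l_\Pi L_\Pi(p, q_1).
\]
The mechanism is a triangle inequality carried out across the bridge $\Pi$, using $p$ as the common reference point. By construction $L_\Pi(p, q)$ dominates both the $\cB$-seminorm $L^\cB(q)$ and the (normalized) bridge term measuring $p$ against $q$ through the pivot $\o$ of $\Pi$, and similarly for $q_1$; so the ``reach'' part of $l_\Pi$ controls the pivot-weighted difference $\o q - \o q_1 = (\o q - p\o) - (\o q_1 - p \o)$, while the ``height'' part of $l_\Pi$ is exactly what is needed to pass from pivot-weighted functionals back to honest states of $\cB$, at a cost governed by $L^\cB(q - q_1)$. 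Writing the self-adjoint element $c = q - q_1$ and evaluating $\|c\|_\cB = \sup\{|\psi(c)| : \psi \in \cS(\cB)\}$, these two contributions combine into precisely the product form on the right-hand side. Verifying this estimate, and in particular checking that the length $l_\Pi$ packages the reach and height so that the constants come out as stated, is the part I expect to be the main obstacle; the factor $1/2$ in each hypothesis is calibrated exactly so that the two summands total strictly less than $1$.

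Granting the estimate, the two hypotheses give $\|q - q_1\|_\cB < 1/2 + 1/2 = 1$. I would then invoke the classical result that projections $q$ and $q_1$ with $\|q - q_1\| < 1$ are unitarily equivalent by a unitary near the identity and, more to the point, are connected by a norm-continuous path $t \mapsto q_t$ of projections in $\cB$; equivalent projections yield isomorphic projective $\cB$-modules $q\cB \cong q_1 \cB$. Since this holds for \emph{every} projection in $\cB$ satisfying $l_\Pi L_\Pi(p, \cdot) < 1/2$, the isomorphism class of the resulting projective $\cB$-module depends only on $p$ and not on the choice of nearby approximant, which is the asserted uniqueness. This is the skeleton; in the genuine application one runs the same argument with $p$, $q$, $q_1$ replaced by projections in matrix algebras over $\cA$ and $\cB$ and with the matricial bridge and matrix seminorms of \cite{R29} in place of $\Pi$ and $L_\Pi$.
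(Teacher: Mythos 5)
Your proposal is correct for the statement as quoted, and its core estimate is in substance the paper's own, but your endgame takes a genuinely shorter route. The inequality you want is obtained in the paper by applying Key Lemma \ref{keylem} to the self-adjoint element $(0,\,q-q_1)=(p,q)-(p,q_1)$ of $\cA\oplus\cB$ and using subadditivity of $L^r$, which gives $\|q-q_1\|\le (h_\Pi+r)\bigl(L^r(p,q)+L^r(p,q_1)\bigr)$; the mechanism you sketch --- pass from a state $\psi$ of $\cB$ to a pivot-definite state $\phi\in S_1(\o)$ at cost $h_\Pi\,L^\cB(q-q_1)$, then use $\phi(c)=\phi(\o c)$ together with the bridge-term normalization $\|p\o-\o q\|\le r\,L^r(p,q)$ --- is precisely the computation in the proof of that lemma, and the resulting norm bound is also the first display in the proof of Theorem \ref{thm4.2}. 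One calibration caveat you rightly flagged: the constant the argument delivers is $h_\Pi+r$, not $l_\Pi$; since $h_\Pi+r$ can be as large as $2\,l_\Pi$ when $r=l_\Pi$, your literal inequality with $l_\Pi$ alone does not follow, and the paper's precise Theorem \ref{th4.5} states its hypotheses with the factor $(h_\Pi+r)$ for exactly this reason (the statement at hand is explicitly the imprecise version, so this is fuzz the paper itself allows). Where you genuinely diverge is after the norm bound: you finish with the bare classical fact that projections at distance $<1$ are homotopic (Proposition \ref{prop3.1} territory, \cite{Blk, RLL}), which suffices for the quoted statement and bypasses the Leibniz machinery entirely. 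The paper instead routes through Theorem \ref{thm4.2} and Proposition \ref{prop4.3}, whose proofs rest on Proposition \ref{prop3.3} and hence on the strongly-Leibniz property and holomorphic functional calculus; what that buys is the quantitative control $L^r(p,q_t)\le(1-\d)^{-1}\max\{L^r(p,q),L^r(p,q_1)\}$ along the homotopy (and, in Proposition \ref{prop4.3}, a path staying in the fiber over $p$), which the precise Theorem \ref{th4.5} asserts and which is used downstream in Corollary \ref{cor4.4} and Theorem \ref{th4.7} to concatenate paths and keep them inside $\cP^{s'}$. So your route is a leaner proof of the weaker, imprecise statement; the paper's heavier machinery is the price of the seminorm control along the path, which your argument does not provide.
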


In sections \ref{secprojA} through \ref{secproof}, theorems 
of this type are then applied
to the specific situation of matrix algebras converging to
the 2-sphere, in order to obtain our 
correspondence between projective modules
over the 2-sphere and projective modules over the matrix algebras. 

Very recently \Lat \ introduced a fairly different way of saying when 
two projective modules over compact quantum 
metric spaces that are close
together correspond \cite{Ltr6}. For this purpose he equips projective 
modules with seminorms that play the role of a weak analog
of a connection on a vector bundle over a Riemannian manifold, much 
as our seminorms on a C*-algebra 
are a weak analog of the total derivative (or of
the Dirac operator) of a Riemannian manifold. 
As experience is gained with more examples it will be interesting
to discover the relative strengths and weaknesses of these two
approaches.

My next project is to try to understand how the Dirac operator
on the 2-sphere is related to ``Dirac operators'' on the
matrix algebras that converge to the 2-sphere, especially since in the
quantum physics literature there are at least three
inequivalent Dirac operators suggested for the matrix
algebras. This will involve results from \cite{R22}.


\section{Matrix Lip-norms and state spaces}
\label{secmats}

As indicated above, the projections representing projective 
modules are elements of matrix algebras over the basic
C*-algebras.
In this section we give some useful perspective on the relations
between certain types of 
seminorms on matrix algebras over a given C*-algebra
and the metrics on the state spaces of the matrix algebras that
come from the seminorms.

Let $\cA$ be a unital C*-algebra. For a given natural number $d$ 
let $M_d$ denote
the algebra of $d\times d$ matrices with entries in $\bC$, and
let $M_d(\cA)$ denote
the C*-algebra of $d\times d$ matrices with entries in $\cA$. Thus
$M_d(\cA) \cong M_d \otimes \cA$. Since $\cA$ is unital, we
can, and will, identify $M_d$ with the subalgebra 
$M_d \otimes 1_\cA$ of $M_d(\cA)$.

We recall from definition 2.1 of \cite{R29} that by a
``slip-norm" on a unital C*-algebra $\cA$ we mean a
$*$-seminorm $L$ on $\cA$ that is permitted to take
the value $+\infty$ and is such that $L(1_A) = 0$.
Given a slip-norm $L^\cA$ on $\cA$, we will need 
slip-norms, $L_d^\cA$, on each $M_d(\cA)$ that correspond
somewhat to $L^\cA$. 
It is reasonable to want these seminorms
to be coherent
in some sense as $d$ varies. As discussed 
before definition 5.1 of \cite{R29}, the appropriate
coherence requirement is that the sequence
$\{L^\cA_d\}$ forms a ``matrix slip-norm''.
To recall what 
this means, for any positive integers $m$ and $n$ we let
$M_{mn}$ denote  the linear space of $m \times n$ 
matrices with complex entries, equipped with the norm obtained
by viewing such matrices as operators from the Hilbert
space $\bC^n$ to the Hilbert space $\bC^m$. We then note
that for any $A \in M_n(\cA)$, for any $\a \in M_{mn}$,
and any $\b \in M_{nm}$, the
usual matrix product $\a A \b$ is in $M_m(\cA)$. 

\begin{defn}
\label{defmtx}
A sequence $\{L^\cA_d\}$  
is a  \emph{matrix slip-norm} for $\cA$ if 
$L^\cA_d$ is a $*$-seminorm (with value $+\infty$ allowed)
on $M_d(\cA)$ 
for each integer $d \geq 1$, and if this family of
seminorms has the following properties:
\begin{enumerate}
\item For any  $A \in M_d(\cA)$, any $\a \in M_{md}$, and
any $\b \in M_{dm}$, we have
\[
L_m^\cA(\a A \b) \leq \|\a\|L_d^\cA(A)\|\b\|.
\]
\item For any  $A \in M_m(\cA)$ and any  $C \in M_n(\cA)$
we have
\[
L_{m+n}^\cA\left(
\begin{bmatrix}
A & 0 \\
0 & C
\end{bmatrix}
\right)
= \max(L_m^\cA(A), L_n^\cA(C))    .
\]   
\item $L_1^\cA$ is a slip-norm, in the sense that  $L_1^\cA(1_\cA) = 0$.
(But $L_1^\cA$ is also allowed to take value 0 on elements that are
not scalar multiples of the identity.) 
\end{enumerate}
We will say that such a matrix slip-norm is \emph{regular} if
$L^\cA_1(a) = 0$ only for $a \in \bC 1_\cA$.
\end{defn}

The properties above imply that for $d \geq 2$ 
the null-space of $L_d^\cA$ will contain the subalgebra $M_d$,
not just the scalar multiples of the identity,
so that $L_d^\cA$ is a slip-norm . This is why 
our definition of a slip-norm does not require that the
null-space be exactly the scalar multiples of the identity. 
When $\{L^\cA_d\}$ is regular,
the properties above imply that for $d \geq 2$ 
the null-space of $L_d^\cA$ will be exactly $M_d$.

In generalization of the relation between $M_d(\cA)$ and $M_d$,
let now $\cA$ be any unital C*-algebra, and let $\cB$ be a unital
C*-subalgebra of $\cA$ ($1_\cA \in \cB$). We let $S(\cA)$ denote
the state space of $\cA$, and similarly for $S(\cB)$. 
It will be useful for us to view $S(\cA)$ as fibered over $S(\cB)$
in the following way. For any $\nu \in S(\cB)$ let
\[
S_\nu(\cA) = \{\mu \in S(\cA) : \mu | _\cB = \nu\}   .
\] 
Since each $\nu \in S(\cB)$ has at least one extension
to an element of $S(\cA)$, no $S_\nu(\cA)$ is empty,
and so the $S_\nu(\cA)$'s form a partition, or fibration,
of $S(\cA)$ over $S(\cB)$. We will apply this observation
to view $S(M_d(\cA))$ as fibered over $S(M_d)$.

Now let $\{L^\cA_d\}$ be a matrix slip-norm for $\cA$.
For any given $d$ the seminorm $L^\cA_d$ determines
a metric $\rho^{L^\cA_d}$ (with value $+\infty$ permitted,
so often referred to as an ``extended metric'') 
on $S(M_d(\cA))$ defined by
\begin{equation}
\label{eqmatlip}
\rho^{L^\cA_d}(\mu_1, \mu_2) = 
\sup\{|\mu_1(A) - \mu_2(A)|: L^\cA_d(A) \leq 1\}  .
\end{equation}
(Notation like $\rho^{L^\cA_d}$ will be used through most of 
this paper.)
We then observe that if $\mu_1|_{M_d} \neq \mu_2|_{M_d}$
then $\rho^{L^\cA_d}(\mu_1, \mu_2) = +\infty$, 
because there will
exist an $A \in M_d$ such that $L^\cA_d(rA) = 0$ for all
$r \in \bR^+$ while 
$|\mu_1(rA) - \mu_2(rA)| = r|\mu_1(A) - \mu_2(A)| \neq 0$.
Thus $\rho^{L^\cA_d}$ can be finite only on the fibers of
the fibration of $S(M_d(\cA))$ over $S(M_d)$.
 
Consistent with definition 5.1 of \cite{R5} (which treats the
more general case of order-unit spaces) and theorem
1.8 of \cite{R4}, we have:

\begin{defn}
\label{deflip}
Let $\cA$ be a unital C*-algebra. By a \emph{Lip-norm}
on $\cA$ we mean a $*$-seminorm $L$ on $\cA$ (with value
$+ \infty$ allowed) that satisfies
\begin{enumerate}
\item $L$ is semifinite, i.e. $\{a \in \cA:L(a) < \infty\}$ is dense
in $\cA$.
\item For any $a \in \cA$ we have $L(a) = 0$ if and only if
$a \in \bC 1_\cA$.
\item $L$ is lower semi-continuous with respect to the
norm of $\cA$, i.e. for any $r \in \bR^+$ the set
$\{a \in \cA: L(a) \leq r\}$ is norm-closed.
\item The the topology  on $S(\cA)$ from the metric $\rho^L$,
defined much as in equation \ref{eqmatlip} above, 
coincides with the weak-$*$ topology.
This is equivalent to the property that the image of 
\[
\cL_\cA^1 = \{a \in \cA: a^* = a, \ L(a) \leq 1\} 
\] 
in $\tilde \cA = \cA / \bC  1_\cA$
is totally bounded for the quotient norm $\| \cdot \|\hspace{2pt}\tilde{}$ on 
$\tilde \cA$. (Or, equivalently, that the image in $\tilde \cA$ of
$\{a \in \cA: L(a) \leq 1\}$ is totally bounded.) 
\end{enumerate}
\end{defn}

\begin{defn}
\label{defmlip}
Let $\cA$ be a unital C*-algebra. By a \emph{ matrix Lip-norm}
on $\cA$ we mean a matrix slip-norm $\{L^\cA_d\}$ for $\cA$
which has the property that $L^\cA_1$ is a Lip-norm for $\cA$
and each  $L^\cA_d$ is lower semi-continuous.   
\end{defn}

We remark that from property (1) of Definition \ref{defmtx} it
follows then that each $L^\cA_d$ is semi-finite.

\begin{prop}
\label{proptb}
Let $\cA$ be a unital C*-algebra and let $\{L^\cA_d\}$ be a
matrix Lip-norm on $\cA$. For each natural number $d$ let
\[
\cL^1_{M_d(\cA)} = \{A \in M_d(\cA): A^* = A, \ L^\cA_d(A) \leq 1\}.
\] 
Then the image of $\cL^1_{M_d(\cA)} $ in the quotient $M_d(\cA) / M_d$
is totally bounded for the quotient norm.
\end{prop}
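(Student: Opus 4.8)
The plan is to reduce the statement for general $d$ to the known totally-bounded property of $L^\cA_1$ coming from the fact that it is a Lip-norm (Definition \ref{deflip}(4)). The matrix structure should let me control the $d^2$ matrix entries of an element $A \in \cL^1_{M_d(\cA)}$ individually, each in terms of $L^\cA_1$, and then reassemble finitely many approximants into a net for $\cL^1_{M_d(\cA)}$ modulo $M_d$.

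First I would use property (1) of Definition \ref{defmtx} to extract entrywise estimates. Writing $e_{ij} \in M_d$ for the matrix units and identifying $M_{1d}, M_{d1}$ with row and column vectors, the entry $A_{ij} \in \cA$ is recovered as $A_{ij} = e_i^{*} A e_j$ where $e_i \in M_{d1}$ is the $i$th standard basis vector, each of norm $1$. Property (1) then gives $L^\cA_1(A_{ij}) \le L^\cA_d(A) \le 1$, so every entry of $A$ lies in $\cL^1_\cA = \{a : L^\cA_1(a) \le 1\}$ (up to the mild point that the diagonal entries are self-adjoint while off-diagonal ones need not be, but each such $A_{ij}$ still has $L^\cA_1 \le 1$, and I can separately bound its self-adjoint and skew-adjoint parts since $L^\cA_d$ is a $*$-seminorm). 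Because $L^\cA_1$ is a Lip-norm, the image of $\{a \in \cA : L^\cA_1(a) \le 1\}$ in $\tilde\cA = \cA/\bC 1_\cA$ is totally bounded. Hence for each entry position $(i,j)$ I can choose, for any $\e > 0$, a finite $\e$-net in $\tilde\cA$ capturing all possible $A_{ij}$ modulo scalars.

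Next I would assemble these into a net at the matrix level. Taking the product of the $d^2$ finite entrywise nets yields finitely many ``model'' matrices $B$ (with entries chosen from lifts of the net elements) such that every $A \in \cL^1_{M_d(\cA)}$ has each entry $A_{ij} - B_{ij}$ within $\e$ of $\bC 1_\cA$ in norm; that is, $A_{ij} - B_{ij} = c_{ij} 1_\cA + r_{ij}$ with $c_{ij} \in \bC$ and $\|r_{ij}\| < \e$. The scalar parts $c_{ij} 1_\cA$ assemble into a scalar matrix $C = (c_{ij}) \otimes 1_\cA \in M_d = M_d \otimes 1_\cA$, which is precisely the subspace we are quotienting by, so it is harmless. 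The remainder $R = (r_{ij})$ satisfies $\|R\|_{M_d(\cA)} \le d\max_{ij}\|r_{ij}\| < d\e$ (using the crude bound that the operator norm of a $d \times d$ matrix is at most $d$ times its largest entry norm). Thus $A - B \equiv R \pmod{M_d}$, and $\|A - B\|\hspace{2pt}\tilde{}\ \le \|R\| < d\e$, exhibiting the finitely many images of the $B$'s as a $d\e$-net for the image of $\cL^1_{M_d(\cA)}$ in $M_d(\cA)/M_d$.

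The main obstacle, and the only step requiring genuine care, is the bookkeeping in passing from entrywise approximation modulo $\bC 1_\cA$ to matrix approximation modulo $M_d$: one must verify that the entrywise scalar ambiguities assemble exactly into the subspace $M_d$ being quotiented out, so that they can be discarded without cost, and that the norm of the assembled remainder is controlled by the entrywise remainders (which is where the factor $d$ enters and why total boundedness is preserved under the $\e \to d\e$ rescaling). Everything else is a routine consequence of property (1) of the matrix slip-norm axioms together with the already-established total boundedness at level $d=1$; since $d$ is fixed throughout, the constant $d$ causes no difficulty for the totally-bounded conclusion.
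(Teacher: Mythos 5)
Your proposal is correct and follows essentially the same route as the paper's proof: extract the matrix entries via property (1) of Definition \ref{defmtx} to get $L^\cA_1(A_{ij}) \le 1$, invoke total boundedness of the level-one set in $\tilde\cA$ from Definition \ref{deflip}(4), and conclude via finiteness of the product of the $d^2$ entrywise nets. The paper states the last step simply as ``the finite product of totally bounded sets is totally bounded''; your added bookkeeping --- checking that the entrywise scalar ambiguities assemble exactly into $M_d = M_d \otimes 1_\cA$ and that the remainder is controlled with a harmless factor of $d$ --- is precisely what that terse remark leaves implicit.
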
 

\begin{proof}
Let $A \in \cL^d_1$, with $A$ the matrix $\{a_{jk}\}$. Then 
$L_d^\cA(A) \leq 1$, and so by property (1) of Definition \ref{defmtx}
we have $L_1^\cA(a_{jk}) \leq 1$ for all $j, \ k$. Thus for each
fixed pair $(j, \ k)$ the set of $(j, \ k)$-entries of all the elements
of $\cL^1_{M_d(\cA)} $ lie in $\{a\in\cA:L(a) \leq 1\}$, 
whose image in $\tilde \cA$ is
totally bounded. But the finite product of totally bounded sets is
totally bounded for any of the equivalent natural metrics
on the product.
\end{proof}

\begin{prop}
\label{propweak}
Let $\cA$ be a unital C*-algebra and let $\{L^\cA_d\}$ be a
matrix Lip-norm on $\cA$. For each natural number $d$
and each $\nu \in S(M_d) $ the topology on
the fiber $S_\nu(M_d(\cA))$
in $S(M_d(\cA))$ determined by the restriction to 
$S_\nu(M_d(\cA))$ of the metric $\rho^{L^\cA_d}$,
agrees with the weak-$*$ topology restricted to
$S_\nu(M_d(\cA))$ (and so $S_\nu(M_d(\cA))$ is compact). 
\end{prop}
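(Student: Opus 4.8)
The plan is to follow the template of Rieffel's characterization (theorem 1.8 of \cite{R4}) of when a Lip-norm metric induces the weak-$*$ topology, adapted to the present fibered situation, with Proposition \ref{proptb} supplying the crucial total-boundedness ingredient. Throughout, fix $\nu \in S(M_d)$ and abbreviate $L = L^\cA_d$ and $\rho = \rho^{L^\cA_d}$. The structural observation that makes the argument work, despite the null-space of $L$ being all of $M_d$ rather than just $\bC 1_\cA$, is that any two states $\mu_1, \mu_2$ lying in the single fiber $S_\nu(M_d(\cA))$ agree on $M_d = M_d \otimes 1_\cA$; hence their difference $\mu_1 - \mu_2$ annihilates $M_d$.

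First I would reduce to self-adjoint elements. Since $L$ is a $*$-seminorm, for any $A$ with $L(A) \leq 1$ both $(A+A^*)/2$ and $(A-A^*)/(2i)$ lie in $\cL^1_{M_d(\cA)}$, and for states one has $|(\mu_1-\mu_2)(A)| \leq |(\mu_1-\mu_2)((A+A^*)/2)| + |(\mu_1-\mu_2)((A-A^*)/(2i))|$; hence $\rho(\mu_1,\mu_2) \leq 2\sup\{|(\mu_1-\mu_2)(a)| : a \in \cL^1_{M_d(\cA)}\}$, so it suffices to control the supremum over the self-adjoint ball, to which Proposition \ref{proptb} applies. I would then record two soft facts. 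The fiber $S_\nu(M_d(\cA))$ is weak-$*$ compact, being the preimage of the closed point $\{\nu\}$ under the weak-$*$ continuous restriction map $S(M_d(\cA)) \to S(M_d)$. And $\rho$ restricts to a genuine point-separating metric on the fiber: the Lipschitz estimate $|(\mu_1-\mu_2)(a)| \leq L(a)\,\rho(\mu_1,\mu_2)$ together with the semifiniteness of $L$ (the remark following Definition \ref{defmlip}) shows that $\rho(\mu_1,\mu_2) = 0$ forces $\mu_1 = \mu_2$.

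The substance is then to show that the identity map from $(S_\nu(M_d(\cA)), \text{weak-}*)$ to $(S_\nu(M_d(\cA)), \rho)$ is continuous; granting this, a continuous bijection from a compact space to a Hausdorff space is a homeomorphism, which yields simultaneously the asserted agreement of topologies and the compactness of the fiber in the metric topology. For continuity at a point $\mu$, given $\e > 0$ I would invoke Proposition \ref{proptb} to choose $a_1, \dots, a_N \in \cL^1_{M_d(\cA)}$ whose images form an $\e$-net for the image of $\cL^1_{M_d(\cA)}$ in $M_d(\cA)/M_d$, and take the weak-$*$ neighborhood $U = \{\mu' : |(\mu'-\mu)(a_i)| < \e, \ 1 \leq i \leq N\}$. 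For $\mu' \in U$ and any self-adjoint $a$ with $L(a) \leq 1$, choosing $a_i$ and $m \in M_d$ with $\|a - a_i - m\| < \e$, the term $(\mu'-\mu)(m)$ vanishes because $\mu'$ and $\mu$ lie in the same fiber, so $|(\mu'-\mu)(a)| \leq \|\mu'-\mu\|\,\|a-a_i-m\| + |(\mu'-\mu)(a_i)| < 2\e + \e$; after the factor-$2$ reduction this bounds $\rho(\mu,\mu')$ by $O(\e)$.

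The main obstacle, and the only place where the fibered hypothesis is essential, is precisely the cancellation $(\mu'-\mu)(m) = 0$ in this final estimate: it is because we remain within a single fiber $S_\nu(M_d(\cA))$ that the large null-space $M_d$ of $L$ does no harm, so that total boundedness merely modulo $M_d$ (Proposition \ref{proptb}), rather than modulo scalars, suffices to force metric convergence. Everything else is routine $\e$-net bookkeeping and the compact-to-Hausdorff packaging.
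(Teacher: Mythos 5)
Your proof is correct and follows essentially the same route as the paper: the paper's proof simply invokes theorem 1.8 of \cite{R4} (with $M_d(\cA)$ as the normed space, $L^\cA_d$ as the seminorm, $M_d$ as the subspace $\cK$, and $\nu$ as the $\eta$ of that theorem) together with Proposition \ref{proptb}, and your argument is a faithful in-line unpacking of exactly that mechanism --- total boundedness modulo $M_d$, the cancellation $(\mu'-\mu)(m)=0$ within a fiber, and the compact-to-Hausdorff homeomorphism argument.
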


\begin{proof}
This is an immediate corollary of theorem 1.8 of \cite{R4}
and Proposition \ref{proptb}
when one lets $M_d(\cA)$ be the normed space $A = \cL$ of
theorem 1.8 of \cite{R4}, lets $L^\cA_d$ be the $L$ of that theorem,
lets $M_d$ be the subspace $\cK$ of that
theorem, and lets the state $\nu$ be the $\eta$ of that theorem. 
\end{proof}

As a consequence, even though $\rho^{L^\cA_d}$ can take
value $+\infty$, it is reasonable to talk about
whether a subset $Y$
of $S(M_d(\cA))$ is $\e$-dense in $S(M_d(\cA))$. This
just means that, as usual, for each $\mu \in S(M_d(\cA))$
there is an element of $Y$ that is within distance $\e$ of it.
This observation will shortly be of importance to us.

A good class of simple examples to keep in mind for all of this
is given next. It is the class that is
central to the paper \cite{R17}.

\begin{exam}
\label{excomm}
Let $(X, \rho)$ be a compact metric space, and let
$\cA = C(X)$. For a fixed natural number $d$ let $L^\cA_d$
be defined on $M_d(\cA) = C(X, M_d)$ by
\[
L^\cA_d(F) = \sup\{\|F(x) - F(y)\|/\rho(x, y): x, y \in X \quad \mathrm{and}
\quad x \neq y \}
\]  
for $F \in M_d(\cA)$. Then for $d \geq 2$ the metric on $S(M_d(\cA))$
determined by $L^\cA_d$ will take on value $+\infty$. But $S(M_d(\cA))$
will be fibered over $S(M_d)$ and the metric will be finite on each
fiber, and the topology it determines on the fiber will coincide with the
weak-$*$ topology there.
\end{exam}

Different ways of dealing with seminorms that may have a large
null-space can be found in definitions 2.1 and 2.3 of \cite{Ltr1a} 
and in definition 2.3 of \cite{AgB}, but they do not seem to
be useful for our present purposes.


\section{Quotients of C*-metric spaces}
\label{secquot}

Let $(Z, \rho_Z)$ be a compact metric space, and let $X$ be
a closed subset of $Z$. Let $\cC = C(Z)$ and let
$\cA = C(X)$. By restricting functions on $Z$ to
the subset $X$ we see that $\cA$ is a quotient
algebra of $\cC$. We need to consider the corresponding
non-commutative situation. We will mostly use it
for the non-commutative analog of the situation in
which $(X, \rho_X)$ and $(Y, \rho_Y)$ are two compact
metric spaces and $Z $ is the disjoint union of
$X$ and $Y$ (with $\rho_Z$ compatible with $\rho_X$
and $\rho_Y$). Then $\cC = \cA \oplus \cB$ where
$\cB = C(Y)$. We will need the matricial version of
this situation.

Accordingly, let $\cA$ and $\cC$ be unital C*-algebras, 
and let $\pi$ be a surjective $*$-homomorphism from
$\cC$ onto $\cA$, so that 
$\cA$ is a quotient of $\cC$. Then by composing with
$\pi$, every state of $\cA$ determines a state of $\cC$.
In this way we obtain a continuous injection of
$S(\cA)$ into $S(\cC)$, and we will often just
view $S(\cA)$ as a subset of $S(\cC)$ without
explicitly mentioning $\pi$.

We think of $\cA$ and $\cC$ as possibly being matrix
algebras over other algebras, and so we will consider
a slip-norm, $L$, on $\cC$, not requiring that $L$
take value 0 only on $\bC 1_\cC$. Thus the metric
$\rho^L$ on $S(\cC)$ can take value $+\infty$,
but we can consider the situation in which, nevertheless,
$S(\cA)$ is $\e$-dense in $S(\cC)$ for some given
$\e \in \bR^+$.

The main result of this section is the following
proposition, which is a generalization of 
key lemma 4.1 of \cite{R17}. A related result in
a more restricted setting, relevant to
our next section, is emphasized in the
paragraph before remark 6.5 of \cite{Ltr2}. The inequality
obtained in our proposition will be basic for
later sections of this paper.

\begin{prop}
\label{prpkey}
Let $\cA$ and $\cC$ be unital C*-algebras, 
and let $\pi$ be a surjective $*$-homomorphism of
$\cC$ onto $\cA$, so that $S(\cA)$ can be viewed
as a subset of $S(\cC)$. Let $L$ be a slip-norm
on $\cC$, and let there be given $\e \in \bR^+$.
If $S(\cA)$ is $\e$-dense in $S(\cC)$ for $\rho^L$,
then for any $c \in \cC$ satisfying $c^* = c$ we have
\[
\|c\| \leq \| \pi(c) \| + \e L(c)  .
\]
\end{prop}

\begin{proof}
Let  $c \in \cC$ satisfy $c^* = c$. Then there
is a $\mu \in S(\cC)$ such that $|\mu(c)| = \|c\|$.
By assumption there is a $\nu \in S(\cA)$ such
that $\rho^L(\nu \circ \pi, \ \mu) \leq \e$. This 
implies that 
\[
|\nu(\pi(c)) - \mu(c)| \leq \e L(c),
\]
so that
\[
\|c\| = |\mu(c)| \leq |\nu(\pi(c)| + \e L(c) \leq \|\pi(c)\| + \e L(c)  ,
\]
as needed.
\end{proof}

It would be interesting to know whether the converse of this
proposition is true, that is, whether the inequality implies
the $\e$-denseness. It is true for ordinary compact
metric spaces.


\section{Bridges and $\e$-density}
\label{secvbclose}

We now recall how in \cite{R29} we used slip-norms
in connection with \Lat's bridges so that we are able to
deal also with matricial bridges. Let $\cA$ and $\cB$ be
unital C*-algebras, and let
$\Pi = (\cD, \o)$ be a bridge from $\cA$ to $\cB$ in the sense 
of {\Lat}   \cite{Ltr2, Ltr5}. That is, $\cA$ and $\cB$ are 
identified as  C*-subalgebras
of the C*-algebra $\cD$ that each contain $1_\cD$, 
while $\o \in \cD$, 
$\o^* = \o$,   $\|\o\| \leq 1$, and
$1 \in \s(\o)$ (which is more than {\Lat} requires, but which holds 
for our main examples). \Lat \ calls $\o$ the ``pivot'' of
the bridge. The specific
bridges that we will use for the case of $SU(2)$ are described
in Section \ref{secalg}.

Fix a positive integer $d$. We can view $M_d(\cA)$ 
and $M_d(\cB)$ as unital
subalgebras of $M_d(\cD)$. Let
$\o_d = I_d \otimes \o$, where $I_d$ is the identity element
of $M_d$, so $\o_d$ can be viewed as the diagonal
matrix in $M_d(\cD)$ with $\o$ in each diagonal entry.
Then it is easily seen that $\Pi_d = (M_d(\cD), \o_d)$ is a bridge
from $M_d(\cA)$ to $M_d(\cB)$. 

\begin{defn}
\label{defmat}
For each
natural number $d$ let $\o_d = I_d \otimes \o$.
The bridges $\Pi_d = (M_d(\cD), \o_d)$ are called the 
\emph{matricial bridges}
determined by the bridge $\Pi$.
\end{defn}

Let $L^\cA$
and $L^\cB$ be Lip-norms on $\cA$ and $\cB$. 
\Lat \ defines \cite{Ltr2, Ltr5}
how to use them to measure bridges from $\cA$ to $\cB$.
We recall here how in section 2 of \cite{R29} I 
adapted his
definitions to the case of matricial bridges, using
matrix Lip-norms. Let  $\{L^\cA_d\}$ be a matrix Lip-norm
on $\cA$ and let $\{L^\cB_d\}$ be a matrix Lip-norm
on $\cB$. Fix $d$. Then $L^\cA_d$ is a slip-norm
on $M_d(\cA)$ and $L^\cB_d$ is a slip-norm
on $M_d(\cB)$, and they can be used to measure
the bridge $\Pi_d$, by making only minor modifications
to \Lat's definition. We review how this is done, but
for notational simplicity we 
will not restrict attention to matrix algebras
over algebras. Instead we will work with 
general unital C*-algebras $\cA$ and $\cB$, but
we will use slip-norms on them. 
So, let $\cA$ and $\cB$ be equipped with slip-norms $L^\cA$
and $L^\cB$. We use these slip-norms 
to measure a bridge $\Pi$ from $\cA$ to $\cB$, as follows.

Set, much as before,
\[
\cL^1_\cA = \{a \in \cA: a^* = a \ \ \mathrm{and} \ \ L^\cA(a) \leq 1\},
\]
and similarly for $\cL^1_\cB$. We view these as subsets of $\cD$.
\begin{defn}
\label{reach}
The \emph{reach} of $\Pi$ is defined by:
\[
\mathrm{reach}(\Pi) 
= \mathrm{Haus}_\cD\{\cL_\cA^1  \o \ , \     \o  \cL_\cB^1\}   ,
\]
where $\mathrm{Haus}_\cD$ denotes the Hausdorff distance with respect
to the norm of $\cD$, and where the product defining $\cL_\cA^1  \o$ and
$ \o  \cL_\cB^1$ is that of $\cD$. We will often write $r_\Pi$ for
$\mathrm{reach}(\Pi) $. Note that $r_\Pi$ can be $+\infty$.
\end{defn}

We now show that when a slip-norm is part of a matrix
Lip-norm, as defined in Definition \ref{defmlip}, its
reach is always finite. By definition, the metric on the
state space determined by a Lip-norm gives
the weak-$*$ topology. Since the state
space is compact, it therefore has finite
diameter for the metric. Given a unital C*-algebra
$\cA$ and a Lip-norm $L^\cA$ on it, we denote
the diameter of $S(\cA)$ for the corresponding metric, 
$\rho^\cA$, by
$\mathrm{diam}(\cA)$ (not mentioning $L^\cA$ unless
confusion may arise,
as is common practice).

\begin{lem}
\label{lemfr}
Let $\cA$ be a unital C*-algebra, and let
 $L^\cA$ be a Lip-norm
on $\cA$. Let $\nu$ be a state of $\cA$. For
any $a \in \cA$ we have
\[
\|a - \nu(a)1_\cA\| \leq 2 \ \mathrm{diam}(\cA) L^\cA(a).
\]
\end{lem}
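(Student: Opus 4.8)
The plan is to reduce to the self-adjoint case, where the operator norm can be recomputed from the state space, and then recover the general case by splitting $a$ into its self-adjoint and anti-self-adjoint parts; the factor of $2$ in the statement will enter precisely at this splitting.

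First I would recall that for a \emph{self-adjoint} element $b \in \cA$ one has $\|b\| = \sup_{\mu \in S(\cA)}|\mu(b)|$, since the numerical range of a self-adjoint element is the convex hull of its (real) spectrum and $\|b\|$ equals the spectral radius. Applying this to $b = a - \nu(a)1_\cA$ in the case where $a$ is self-adjoint (so $\nu(a) \in \bR$ and $b$ is again self-adjoint), and using $\mu(1_\cA) = 1$, gives
\[
\|a - \nu(a)1_\cA\| = \sup_{\mu \in S(\cA)}|\mu(a) - \nu(a)|.
\]
For each $\mu$ the definition of the metric $\rho^\cA$ together with homogeneity in $a$ yields $|\mu(a) - \nu(a)| \leq \rho^\cA(\mu, \nu)L^\cA(a)$ (the case $L^\cA(a)=0$ being trivial, as then $a \in \bC 1_\cA$ so both sides vanish), and $\rho^\cA(\mu,\nu) \leq \mathrm{diam}(\cA)$ by the definition of the diameter. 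Hence in the self-adjoint case one even obtains the sharper bound $\|a - \nu(a)1_\cA\| \leq \mathrm{diam}(\cA)\, L^\cA(a)$, with no factor of $2$.

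For general $a$ I would write $a = a_1 + i a_2$ with $a_1 = (a + a^*)/2$ and $a_2 = (a - a^*)/(2i)$ self-adjoint, so that $a - \nu(a)1_\cA = (a_1 - \nu(a_1)1_\cA) + i(a_2 - \nu(a_2)1_\cA)$. The triangle inequality together with the self-adjoint bound gives $\|a - \nu(a)1_\cA\| \leq \mathrm{diam}(\cA)\big(L^\cA(a_1) + L^\cA(a_2)\big)$. Since $L^\cA$ is a $*$-seminorm we have $L^\cA(a^*) = L^\cA(a)$, and the seminorm properties then give $L^\cA(a_1) \leq L^\cA(a)$ and $L^\cA(a_2) \leq L^\cA(a)$; summing produces the claimed factor of $2$.

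There is no serious obstacle here: the only point requiring care is the passage from self-adjoint to general $a$, which is exactly where the $2$ becomes unavoidable with this argument. Equivalently, one could argue in a single step using the numerical-radius inequality $\|c\| \leq 2\sup_{\mu \in S(\cA)}|\mu(c)|$ applied to $c = a - \nu(a)1_\cA$, at the cost of invoking that inequality in place of the elementary self-adjoint splitting.
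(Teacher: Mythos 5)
Your proof is correct and takes essentially the same route as the paper's: the paper likewise first treats self-adjoint $a$ via $\|a - \nu(a)1_\cA\| = \sup_{\mu \in S(\cA)}|\mu(a)-\nu(a)| \le \rho^\cA(\mu,\nu)\,L^\cA(a) \le \mathrm{diam}(\cA)\,L^\cA(a)$, and then passes to general $a$ by applying this to the real and imaginary parts, which is exactly where the factor of $2$ enters. Your write-up merely spells out the details (the $*$-seminorm bound $L^\cA(a_j)\le L^\cA(a)$ and the trivial case $L^\cA(a)=0$) that the paper leaves implicit.
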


\begin{proof}
Suppose first that $a \in \cA$ with $a^* = a$.
For any state $\mu$ on $\cA$ we have
\[
|\mu(a - \nu(a)1_\cA)| = |\mu(a) - \nu(a)| \leq  \rho^\cA(\mu, \nu) L^\cA(a)
\leq  \mathrm{diam}(\cA) L^\cA(a).
\]
Consequently $\|a - \nu(a)1_\cA\| \leq  \mathrm{diam}(\cA) L^\cA(a)$.
For general $a \in \cA$, when we apply this inequality to the real and
imaginary parts of $a$ we obtain the desired result.
\end{proof}

\begin{prop}
\label{propfr}
Let $\cA$ and $\cB$ be unital C*-algebras,
and let $\Pi = (\cD, \o)$ be a bridge from $\cA$ to $\cB$. Let
$\{L^\cA_d\}$ be a matrix Lip-norm
on $\cA$ and let $\{L^\cB_d\}$ be a matrix Lip-norm
on $\cB$. Let $\mathrm{diam}(\cA)$ be the diameter
of $\cA$ for the Lip-norm $L^\cA_1$, and similarly
for $\cB$. Then for any natural number $d$ we
have
\[
r_{\Pi_d} \leq 2 \ d \ \max\{\mathrm{diam}(\cA), \ \mathrm{diam}(\cB)\}.
\]
\end{prop}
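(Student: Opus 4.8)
The plan is to exploit the fact that the reach of $\Pi_d$ is a Hausdorff distance, so it suffices to show that every point of $\cL^1_{M_d(\cA)}\,\o_d$ lies within $2d\max\{\mathrm{diam}(\cA),\mathrm{diam}(\cB)\}$ of the set $\o_d\,\cL^1_{M_d(\cB)}$, and symmetrically. The mechanism that connects the two sides is that the ``scalar'' matrices $M_d = M_d \otimes 1_\cD$ sit inside \emph{both} $\cL^1_{M_d(\cA)}$ and $\cL^1_{M_d(\cB)}$: indeed any selfadjoint element of $M_d$ is killed by $L^\cA_d$ and by $L^\cB_d$ (the null-space remark following Definition \ref{defmtx}), and moreover every element of $M_d\otimes 1_\cD$ commutes with $\o_d = I_d\otimes\o$. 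So I would use a selfadjoint scalar matrix as the common element through which to match $A\o_d$ with a suitable $\o_d B$.

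Concretely, given $A=\{a_{jk}\}\in\cL^1_{M_d(\cA)}$, I would first extract entries via $a_{jk}=e_j^*Ae_k$ (with $e_j^*\in M_{1d}$, $e_k\in M_{d1}$ of norm $1$); property (1) of Definition \ref{defmtx} then gives $L^\cA_1(a_{jk})\le L^\cA_d(A)\le 1$ for every $j,k$. Fix a state $\nu$ of $\cA$, set $T=\{\nu(a_{jk})\}\in M_d$, and let $B=T\otimes 1_\cD$. Since $A^*=A$ forces $a_{kj}=a_{jk}^*$ and states satisfy $\nu(x^*)=\overline{\nu(x)}$, we get $T^*=T$, so $B\in M_d\subseteq M_d(\cB)$ is selfadjoint with $L^\cB_d(B)=0$, i.e.\ $B\in\cL^1_{M_d(\cB)}$. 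Because $B$ is scalar in the $\cD$-factor, $\o_d B=T\otimes\o=B\o_d$, and therefore
\[
A\o_d-\o_d B=(A-B)\o_d,\qquad \|A\o_d-\o_d B\|\le\|A-B\|\,\|\o_d\|\le\|A-B\|,
\]
using $\|\o_d\|=\|\o\|\le 1$.

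It remains to bound $\|A-B\|$, and this is where the factor $d$ enters, which I regard as the only genuinely delicate point. The entries of $A-B$ are $a_{jk}-\nu(a_{jk})1_\cA$, and Lemma \ref{lemfr} (applied in $\cA$ with the Lip-norm $L^\cA_1$ and the state $\nu$, together with $L^\cA_1(a_{jk})\le 1$) gives $\|a_{jk}-\nu(a_{jk})1_\cA\|\le 2\,\mathrm{diam}(\cA)$. I would then invoke the standard operator-norm estimate $\|X\|\le d\max_{j,k}\|x_{jk}\|$ for $X=\{x_{jk}\}\in M_d(\cA)$ (proved in one line by applying Cauchy--Schwarz to $\|(X\xi)_j\|=\|\sum_k x_{jk}\xi_k\|$ over each row), which yields $\|A-B\|\le 2d\,\mathrm{diam}(\cA)$, and hence $\|A\o_d-\o_d B\|\le 2d\,\mathrm{diam}(\cA)$. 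The reverse direction is entirely symmetric: starting from $B=\{b_{jk}\}\in\cL^1_{M_d(\cB)}$, fix a state of $\cB$, form the selfadjoint scalar matrix $A\in\cL^1_{M_d(\cA)}$ of its values, use $A\o_d=\o_d A$ to write $A\o_d-\o_d B=\o_d(A-B)$, and bound by $2d\,\mathrm{diam}(\cB)$. Taking the maximum over the two covering radii gives $r_{\Pi_d}\le 2d\max\{\mathrm{diam}(\cA),\mathrm{diam}(\cB)\}$, as claimed.
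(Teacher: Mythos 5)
Your proof is correct and takes essentially the same route as the paper's: both replace the entries $a_{jk}$ of $A$ by their values under a fixed state, obtaining a selfadjoint scalar matrix $B \in M_d \subseteq M_d(\cB)$ with $L^\cB_d(B) = 0$ that commutes with $\o_d$, and then bound $\|A\o_d - \o_d B\| \le \|A-B\| \le d \max_{j,k}\|a_{jk} - \nu(a_{jk})1\| \le 2d\,\mathrm{diam}(\cA)$ via Lemma \ref{lemfr} and property (1) of Definition \ref{defmtx}, finishing by symmetry. The only cosmetic difference is that the paper fixes a state $\psi \in S(\cD)$ with $\psi(\o) = 1$ (a property its estimate never actually invokes), whereas you use an arbitrary state of $\cA$, which works equally well.
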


\begin{proof}
By definition, $1 \in \s(\o)$, so we can find a $\psi \in S(\cD)$
such that $\psi(\o) = 1$. We fix such a $\psi$.
Let $d$ be given. Let $A \in M_d(\cA)$  with $A^* = A$ and
$L_d^\cA(A) \leq 1$, and $A = \{a_{jk}\}$. Define $B= \{b_{jk}\}$ 
in $M_d(\cB)$ by $b_{jk} = \psi(a_{jk})1_\cD$. 
Clearly $B^* = B$, and $L_d(B) = 0$ by conditions 1 and 2
of Definition \ref{defmtx}. Then
\begin{align*}
\|A\o_d - \o_d B\| &= \|\{a_{jk}\o - \o \psi(a_{jk}\}\| 
= \|(A-B)\o_d\|  \\
&\leq \|A-B\| \leq d \max \{\|a_{jk} - \psi |_\cA (a_{jk})\|   \\
&\leq 2 \ d \ \mathrm{diam}(\cA)\max \{ L_1^\cA(a_{jk})\}
\leq 2 \ d \ \mathrm{diam}(\cA),
\end{align*}
where for the next-to-last inequality we have used
Lemma \ref{lemfr}, and for the last inequality we
have used condition 1 of Definition \ref{defmtx}
and the fact that $ L_d^\cA(A)\leq 1$. In this way
we see that $A\o_d$ is within 
distance $2d \ \mathrm{diam}(\cA)$ of
$\o_d   \cL^1_{M_d(\cB)}$. On reversing
the roles of $A$ and $B$, we see that we have the
desired result.
\end{proof}

To define the height of $\Pi$ we need to 
consider the state space, $S(\cA)$,
of $\cA$, and similarly for $\cB$ and $\cD$. Even more, we set
\[
S_1(\o) = \{\phi \in S(\cD): \phi(\o) = 1\}     ,   
\]
the ``level-1 set of $\o$''. 
 It is not empty because by
assumption $1 \in \s(\o)$. 
The elements of $S_1(\o)$ are ``definite'' 
on $\o$ in the sense \cite{KR1} that 
for any $\phi \in S_1(\o)$ and $d \in \cD$ we have
\[
\phi(d\o) = \phi(d) = \phi(\o d)     .
\]
Let $\rho^\cA$ denote the metric on $S(\cA)$ determined by $L^\cA$,
defined, much is in equation \ref{eqmatlip}, by
\begin{equation}
\label{metr}
\rho^\cA(\mu, \nu) = \sup\{|\mu(a) - \nu(a)|: a \in \cL^1_\cA  \}.
\end{equation}
(Since we now are not assuming we have Lip-norms, we must
permit $\rho^\cA$ to take the value $+\infty$. Also, it is
not hard to see that the supremum can be taken equally well
just over all of $\{a \in \cA: L^\cA(a) \leq 1\}$.)
Define $\rho^\cB$ on $S(\cB)$ similarly. 
\begin{notation}
We denote by $S_1^\cA(\o)$ the set of restrictions of the
elements of $S_1(\o)$ to $\cA$. We define $S_1^\cB(\o)$ 
similarly. 
\end{notation}

\begin{defn}
\label{height}
Let $\cA$ and $\cB$ be unital C*-algebras and let
$\Pi = (\cD, \o)$ be a bridge from $\cA$ to $\cB$ . 
Let $L^\cA$ and $L^\cB$ be slip-norms on $\cA$ and $\cB$. 
The \emph{height} of the bridge $\Pi$ is given by
\[
\mathrm{height}(\Pi) =
\max\{\mathrm{Haus}_{\rho^\cA}(S_1^\cA(\o), S(\cA)) , \ 
\mathrm{Haus}_{\rho^\cB}(S_1^\cB(\o) , S(\cB))\}  ,
\]
where the Hausdorff distances are with respect to the indicated
metrics determined by $L^\cA$ and $L^\cB$
(with value $+\infty$ allowed). We will 
often write $h_\Pi$ for
$\mathrm{height}(\Pi) $.  
The length of $\Pi$ is then defined by
\[
\mathrm{length}(\Pi) 
= \max\{\mathrm{reach}(\Pi), \ \mathrm{height}(\Pi)\}  .
\]
\end{defn}

Up to now I have not found a proof of the analog for
height of Proposition \ref{propfr}, namely that when
matrix Lip-norms are involved the height is always
finite, though I suspect that this is true. For the 
``bridges with conditional expectation'' that we will use
later (with matrix Lip-norms) the height, and so the length, 
is always finite.

Anyway, we will now just make the quite strong assumption that 
$\mathrm{length}(\Pi) < \infty$. It is shown in section 6 of \cite{R29}
that this assumption is satisfied for the specific class of examples
that we deal with in the present paper. This will be somewhat 
reviewed later in Section \ref{secproof}. 
The main consequence of this assumption for
our present purposes is a generalization to our 
present non-commutative
setting of key lemma 4.1 of \cite{R17}. 
This generalization will yield for this case the same
inequality as just found in Proposition \ref{prpkey} but with the
added information of a relevant value for $\e$. The core
calculations for
this generalization can essentially be found
in the middle of the proof of proposition 5.3 of \cite{Ltr2}. 
We will call our generalization again the Key Lemma. The set-up
is as follows. As above, let $\cA$ and $\cB$ be unital C*-algebras, and
let $\Pi = (\cD, \o)$ be a bridge between them. Let $L^\cA$ and
$L^\cB$ be slip-norms on  $\cA$ and $\cB$, and let $r_\Pi$
and $h_\Pi$ denote the reach and height of $\Pi$ as measured
by $L^\cA$ and $L^\cB$. Assume that $r_\Pi$
and $h_\Pi$ are both finite. Define a seminorm, $N_\Pi$,
on $\cA \oplus \cB$ by
\[
N_\Pi(a, b) = \|a\o \ - \ \o b\|.
\]
Notice that $N_\Pi$ is in general not a $*$-seminorm. Much
as in theorem 6.2 of \cite{R21}, define a $*$-seminorm, $\hat N_\Pi$,
on $\cA \oplus \cB$ by
\[
\hat N_\Pi(a, b) = N_\Pi(a,b) \vee N_\Pi(a^*, b^*)   ,
\]
where $\vee$ means ``maximum''. Of course, $\hat N_\Pi$
agrees with $N_\Pi$ on self-adjoint elements.

Let $r \geq r_\Pi$ be chosen. (The 
reason for not just taking $r = r_\Pi$ will be given in the fifth 
paragraph of the proof of Theorem \ref{mainthm}.) 
Define a seminorm, $L^r$, on the C*-algebra $\cA \oplus \cB$ by
\begin{equation}
\label{eqlr}
L^r(a,b) = L^\cA(a) \vee L^\cB(b) \vee r^{-1} \hat N_\Pi(a, b)   .
\end{equation}
Then $L^r$ is a slip-norm on $\cA \oplus \cB$, and it determines a metric,
$\rho^{L^r}$, on $S(\cA \oplus \cB)$. Note that $\cA$ and $\cB$
are both quotients of $\cA \oplus \cB$ in an evident way, so that we
can consider the quotient seminorms on them coming from $L^r$.
As discussed around example 5.4 of \cite{R21}, there are complications
with quotients of $*$-seminorms on non-self-adjoint elements.
Accordingly, much as for notation 5.5 of \cite{R21}, we make:

\begin{defn}
\label{admisss}
Let $\cA$ and $\cB$, and $L^\cA$ and $L^\cB$ be as above.
We say that a $*$-seminorm $L$ on $\cA \oplus \cB$ is
\emph{admissible} for $L^\cA$ and $L^\cB$ if its quotient on $\cA$
agrees with $L^\cA$ on self-adjoint elements of $\cA$, and
similarly for its quotient on $\cB$.
\end{defn}

\begin{prop}
\label{admis}
With notation as above, if $r \geq r_\Pi$ then $L^r$ is
admissible for $L^\cA$ and $L^\cB$.
\end{prop}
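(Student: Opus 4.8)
The plan is to compute the quotient seminorm of $L^r$ on $\cA$ directly and compare it with $L^\cA$ on self-adjoint elements; the statement for $\cB$ will then follow from the symmetry built into the reach. Recall that $\cA$ is the quotient of $\cA \oplus \cB$ by the ideal $\{0\} \oplus \cB$, so the quotient seminorm of $L^r$ on $\cA$ sends $a$ to $\inf\{L^r(a,b): b \in \cB\}$. I fix a self-adjoint $a \in \cA$ and show this infimum equals $L^\cA(a)$. One inequality is immediate: since $L^r(a,b) = L^\cA(a) \vee L^\cB(b) \vee r^{-1}\hat N_\Pi(a,b) \geq L^\cA(a)$ for every $b \in \cB$, the infimum is at least $L^\cA(a)$. (This also disposes of the case $L^\cA(a) = +\infty$.)

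For the reverse inequality I would exploit the reach. By homogeneity of $L^\cA$ and of the quotient seminorm it suffices to treat $a$ with $L^\cA(a) = 1$, that is $a \in \cL^1_\cA$ (the degenerate case $L^\cA(a)=0$ being handled by running the same construction on $na$ and dividing the output by $n$, which drives the quotient value to $0$). Given $a \in \cL^1_\cA$: because $r \geq r_\Pi = \mathrm{Haus}_\cD(\cL^1_\cA\o, \o\cL^1_\cB)$, the element $a\o$ lies within distance $r_\Pi$ of $\o\cL^1_\cB$, so for any $\e > 0$ there is a (necessarily self-adjoint) $b \in \cL^1_\cB$ with $\|a\o - \o b\| \leq r_\Pi + \e \leq r + \e$. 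Since $a$ and $b$ are self-adjoint, $\hat N_\Pi(a,b) = N_\Pi(a,b) = \|a\o - \o b\|$, whence
\[
L^r(a,b) = L^\cA(a) \vee L^\cB(b) \vee r^{-1}\hat N_\Pi(a,b) \leq 1 \vee 1 \vee (1 + \e/r) = 1 + \e/r.
\]
Letting $\e \to 0$ shows the quotient seminorm at $a$ is at most $1 = L^\cA(a)$, as required.

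The identical argument with the roles of $\cA$ and $\cB$ interchanged — using that $\mathrm{Haus}_\cD$ is symmetric, so that every $\o b$ with $b \in \cL^1_\cB$ lies within $r_\Pi$ of $\cL^1_\cA\o$ — shows that the quotient of $L^r$ on $\cB$ agrees with $L^\cB$ on self-adjoint elements. The one point that needs care, and the reason admissibility is asserted only on self-adjoint elements, is the step $\hat N_\Pi(a,b) = \|a\o - \o b\|$: this collapse of $\hat N_\Pi$ back to $N_\Pi$ uses $a^* = a$ and $b^* = b$, and it is exactly what lets the reach, which controls $\|a\o - \o b\|$, bound the symmetrized seminorm $\hat N_\Pi$. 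The only other mild subtlety is that the Hausdorff distance is a supremum of infima, so the approximating $b$ realizes the distance only up to $\e$; carrying this $\e$ through the homogeneity reductions is routine.
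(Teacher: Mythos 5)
Your proof is correct and follows essentially the same route as the paper's: the lower bound is immediate from the definition of $L^r$, and the upper bound uses the reach to produce, for $a \in \cL^1_\cA$ self-adjoint, a self-adjoint $b \in \cL^1_\cB$ with $\|a\o - \o b\| \leq r_\Pi + \e$, then lets $\e \to 0$ and scales. Your explicit treatment of the degenerate case $L^\cA(a) = 0$ (applying the construction to $na$ and dividing by $n$) is a small point the paper subsumes under ``by scaling,'' and it is handled correctly.
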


The proof of this proposition is implicit in the proof of theorem 6.3
of \cite{Ltr2}, and amounts to showing that $(a,b) \to r^{-1}N_\Pi(a, b)$,
when restricted to self-adjoint elements,
is a ``bridge'' in the more primitive sense defined in definition 5.1
of \cite{R6}, and then using the main part of the proof of theorem 5.2
of \cite{R6}. For the reader's convenience we give a short
direct proof here, in particular because we will need related facts
in Section \ref{secproof}.

\begin{proof}
Clearly  $L^r(a,b) \geq L^\cA(a)$ for every $b$. It follows
that the quotient of $L^r$ on $\cA$ is no smaller than $L^\cA$.
Let $a \in \cA$ with $a^* = a$ and $L^\cA(a) = 1$. 
Let $\e > 0$ be given. By the definition of $r_\Pi$
there is a $b \in \cB$ with $b^* = b$ and $L^\cB(b) \leq 1$ such that
$\|a\o - \o b\| \leq r_\Pi + \e$. Since $r \geq r_\Pi$ it follows that
\[
L^\cB(b) \vee r^{-1}\|a\o - \o b\|  \ \leq \ 1 + r^{-1}\e = L^\cA(a) + r^{-1}\e.
\]
Since $\e$ is arbitrary, it follow that on $a$ the quotient of
$L^r$ is equal to $L^\cA(a)$. By scaling it follows that  
the quotient of $L^r$ on $\cA$ agrees with $L^\cA$ on
all self-adjoint elements. Reversing the
roles of $\cA$ and $\cB$, we obtain the corresponding fact
for the quotient of $L^r$ on $\cB$.
\end{proof}

The following lemma, which is closely related to 
the comments in the
paragraph before remark 6.5 of \cite{Ltr2},
shows how Proposition \ref{prpkey} is 
relevant to the context of bridges.

\begin{keylemma}
\label{keylem}
With notation as above, let $(a,b) \in \cA \oplus \cB$ with $a^* = a$
and $b^* = b$. Let $r \geq r_\Pi$ be chosen, and let $L^r$ be defined
on $\cA \oplus \cB$ by equation \ref{eqlr}. Then
\[
\|(a,b)\| \leq \|a\| \ + \ (h_\Pi + r)L^r(a,b),
\]
and similarly with the roles of $a$ and $b$ interchanged.
\end{keylemma} 

\begin{proof}
By scaling, it suffices to prove this under the assumption that $L^r(a,b) = 1$,
so we assume this. Let $\nu \in S(\cB)$. By the definition of $h_\Pi$
there is a $\psi \in S_1(\o)$ such that $\rho_{L^\cB}(\nu, \psi |_\cB) \leq h_\Pi$. 
Then, since $\psi$ is definite on $\o$, 
and $L^\cB(b) \leq L^r(a, b) \leq 1$, we have:
\begin{align*}
|\nu(b)| &\leq |\nu(b) - \psi(b)| + |\psi(\o b)|   \\
&\leq \rho_{L^\cB}(\nu, \psi |_\cB)L^\cB(b) 
+ |\psi(\o b) - \psi(a\o)| + |\psi(a\o)|  \\
&\leq h_\Pi + \|a\o \ - \o b\| + |\psi(a)|  \\
&\leq h_\Pi + r +\|a\| .
\end{align*}
Since this holds for all $\nu \in S(\cB)$, and since $b^* = b$, 
it follows that
$
\|b\| \leq \| a \| + h_\Pi + r,
$
and so
\[
\|(a,b)\| \leq \| a \| + h_\Pi + r,
\]
as needed.
\end{proof}


\section{Projections and Leibniz seminorms}
\label{secproleib}

We now assume that the slip-norms 
$L^\cA$ and $L^\cB$ on $\cA$ and $\cB$ are lower semi-continuous
with respect to the norm topologies on $\cA$ and $\cB$. It is then
clear that $L^r$, as defined in equation \ref{eqlr}, 
is lower semi-continuous on $\cA \oplus \cB$
since $N_\Pi$ is norm-continuous on $\cA \oplus \cB$. We now also
assume that $L^\cA$ and $L^\cB$ satisfy the Liebniz inequality,
that is,
\[
L^\cA(aa') \leq L^\cA(a)\|a'\| + \|a\|L^\cA(a')
\]
for any $a, a' \in \cA$, and similarly for $\cB$. We need to assume
in addition that $L^\cA$ and $L^\cB$ are \emph{strongly} Leibniz,
that is, that if $a \in \cA$ is invertible in $\cA$, then
\[
L^\cA(a^{-1}) \leq \|a^{-1}\|^2 L^\cA(a),
\]
and similarly for $\cB$. Then a simple computation, discussed
at the beginning of the proof of theorem 6.2 of \cite{R21},
shows that $L^r$ is strongly Leibniz on $\cA \oplus \cB$.
We will also assume that $L^\cA$ and $L^\cB$ are semi-finite
in the sense that $\{a \in \cA: L^\cA(a) <  \infty\}$ is dense
in $\cA$, and similarly for $\cB$. Then $L^r$ is also semi-finite.

With the above structures as motivation,
we now adapt to our non-commutative setting
many of the basic results of sections 2, 3 and 4 of \cite{R17}.
For notational simplicity 
we first consider a unital C*-algebra $\cC$ (such
as $\cA \oplus \cB$) equipped with a semi-finite 
lower-semi-continuous strongly-Leibniz slip-norm 
$L^\cC$ (defined on all of $\cC$). 

We now consider the relation between the strong Leibniz property and the
holomorphic functional calculus, along the lines of section 2 of
\cite{R17}. Let $c \in \cC$  
and let $\th$ be a $\bC$-valued function
defined and holomorphic in some neighborhood of the spectrum,
$\s(c)$, of $c$.  In the standard way used for ordinary Cauchy integrals,
we let 
$\g$ be a collection of piecewise-smooth oriented closed curves in the
domain 
of $\th$ that surrounds $\s(c)$ but does not meet $\s(c)$, such that
$\th$ on $\s(c)$ is represented by its Cauchy integral using $\g$.
Then 
$z \mapsto (z-c)^{-1}$ will, on the range of $\g$, be a well-defined 
and continuous function with values in $\cC$.  Thus we can define 
$\th(c)$ by
\[
\th(c) = \frac {1}{2\pi i} \int_{\g} \th(z)(z-c)^{-1}dz.
\]
For a fixed neighborhood of $\s(c)$ containing the range of $\g$ 
the mapping $\th \mapsto \th(c)$
is a unital homomorphism 
from the algebra of holomorphic functions on this 
neighborhood of $\s(c)$ into $\cC$ \cite{KR1, Rdn}. The following
proposition is the generalization of proposition 2.3 of \cite{R17}
that we need here.

\begin{prop}
\label{prop2.3}
Let  $L^\cC$ be a lower-semicontinuous strongly-Leibniz 
slip-norm on $\cC$.
For $c \in \cC$, and for $\th$ and $\g$ as above, 
we have  
\[
L^\cC(\th(c)) \le \left( \frac {1}{2\pi} 
\int_{\g} |\th(z)|d|z|\right)(M_{\g}(c))^2L^\cC(c)  ,
\]
where $M_{\g}(c) = \max\{\|(z-c)^{-1}\|: z \in \mathrm{range}(\g)\}$.
\end{prop}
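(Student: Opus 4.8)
The plan is to estimate the slip-norm of the Cauchy integral by approximating it with Riemann sums, controlling the slip-norm of each summand through the strong Leibniz property, and then passing to the limit using lower semi-continuity. If $L^\cC(c) = +\infty$ the asserted inequality is trivial, so throughout I may assume $L^\cC(c) < \infty$.

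First I would record the basic estimate on the resolvent. For each $z \in \mathrm{range}(\g)$ the element $z - c$ is invertible in $\cC$, and since $L^\cC$ vanishes on scalar multiples of $1_\cC$ we have $L^\cC(z - c) = L^\cC(c)$. The strong Leibniz inequality applied to the invertible element $z - c$ then gives
\[
L^\cC((z-c)^{-1}) \leq \|(z-c)^{-1}\|^2 L^\cC(z-c) = \|(z-c)^{-1}\|^2 L^\cC(c) \leq (M_{\g}(c))^2 L^\cC(c),
\]
so in particular each resolvent $(z-c)^{-1}$ has finite slip-norm.

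Next I would approximate $\th(c)$ by Riemann sums $S = \frac{1}{2\pi i}\sum_j \th(z_j)(z_j - c)^{-1}\Delta z_j$ associated with partitions of $\g$. Since $z \mapsto (z-c)^{-1}$ is norm-continuous on the compact set $\mathrm{range}(\g)$ and $\th$ is continuous there, the integrand is a norm-continuous $\cC$-valued function on a compact set, so these Riemann sums converge in norm to $\th(c)$ as the mesh of the partition tends to $0$. Using that $L^\cC$ is a seminorm, hence subadditive and absolutely homogeneous, together with the resolvent estimate above,
\[
L^\cC(S) \leq \frac {1}{2\pi}\sum_j |\th(z_j)| \, |\Delta z_j| \, L^\cC((z_j-c)^{-1}) \leq \frac {1}{2\pi}(M_{\g}(c))^2 L^\cC(c)\sum_j |\th(z_j)|\,|\Delta z_j|,
\]
and as the mesh tends to $0$ the sum $\sum_j |\th(z_j)|\,|\Delta z_j|$ converges to $\int_{\g} |\th(z)|\,d|z|$.

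Finally, since the Riemann sums $S$ converge in norm to $\th(c)$ and $L^\cC$ is lower semi-continuous, I would conclude
\[
L^\cC(\th(c)) \leq \liminf L^\cC(S) \leq \left(\frac {1}{2\pi}\int_{\g} |\th(z)|\,d|z|\right)(M_{\g}(c))^2 L^\cC(c),
\]
which is the desired bound. The only point requiring care is the interchange of the slip-norm with the limit of the Riemann sums: because $L^\cC$ may take the value $+\infty$, one cannot simply invoke continuity, but lower semi-continuity is exactly what legitimizes the passage to the limit and yields the $\liminf$ inequality above. I expect this to be the main (though mild) obstacle; everything else is the standard subadditivity-and-homogeneity bookkeeping for seminorms combined with the strong Leibniz estimate on resolvents.
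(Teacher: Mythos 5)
Your proposal is correct and follows essentially the same route as the paper's proof, which brings $L^\cC$ inside the Cauchy integral via lower semi-continuity (explicitly suggesting Riemann-sum approximation, which you carry out in detail) and then applies the strong Leibniz estimate $L^\cC((z-c)^{-1}) \le \|(z-c)^{-1}\|^2 L^\cC(c)$ to each resolvent. Your write-up merely makes explicit the steps the paper leaves terse, including the correct observation that $L^\cC(z-c) = L^\cC(c)$ and the $\liminf$ passage.
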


\begin{proof}
It suffices to prove this when $L^\cC(c) < \infty$. Because $L^\cC$ is lower-semicontinuous, it can be brought within the
integral 
defining $\th(c)$, with the evident inequality.  (Think of
approximating 
the integral by Riemann sums.)  Because $L^\cC$ is strongly Leibniz, this
gives
\[
L^\cC(\th(c)) \le \frac {1}{2\pi} \int_{\g} |\th(z)| \ 
\|(z-c)^{-1}\|^2 \ L^\cC(c) \ d|z|.
\]
On using the definition of $M_{\g}(c)$ we obtain the desired
inequality.
\end{proof}

This proposition shows that $\{c \in \cC: L^\cC(c)<\infty\}$ is closed
under the holomorphic functional calculus (and is a dense
$*$-subalgebra of $\cC$ as seen earlier). 
The next proposition is essentially
proposition 3.1 of \cite{R17}. It is a known result
(see, e.g., section~$3.8$ of \cite{GVF}). 
We do not repeat
here the proof of it given in \cite{R17}. 

\begin{prop}
\label{prop3.1}
Let $\cC$ be a unital $C^*$-algebra, and let $C'$ be a dense
$*$-subalgebra 
closed under the holomorphic functional calculus in $\cC$.  Let $p$ be a 
projection in $\cC$.  Then for any $\d > 0$ there is a projection $p_1$
in 
$C'$ such that $\|p-p_1\| < \d$.  If $\d < 1$ then $p_1$ is homotopic
to 
$p$ through projections in $\cC$, that is, there is a continuous
path of projections in $\cC$ going from $p_1$ to $p$.  
\end{prop}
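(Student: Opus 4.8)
The plan is to follow the standard two-step argument: first manufacture a nearby projection in $C'$ by ``rounding'' a self-adjoint approximant via the holomorphic functional calculus, and then exhibit the homotopy by rounding a straight-line path of self-adjoint elements in the same way.

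First I would produce $p_1$. Since $C'$ is a dense $*$-subalgebra, choose $c \in C'$ with $\|c - p\|$ small and replace it by its self-adjoint part $a = (c + c^*)/2 \in C'$; then $a^* = a$ and $\|a - p\| \le \|c - p\|$. Because $\s(p) \subseteq \{0,1\}$ and $a$ is a small self-adjoint perturbation of $p$, the spectrum $\s(a)$ is confined to a small neighborhood of $\{0,1\}$, in particular staying off $1/2$ once $\|a-p\| < 1/2$. Let $\th$ be the function equal to $0$ on the half-plane $\Re z < 1/2$ and $1$ on $\Re z > 1/2$; it is holomorphic on a neighborhood of $\s(a)$ and satisfies $\th^2 = \th$ and $\th(\bar z) = \overline{\th(z)}$. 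Setting $p_1 = \th(a)$, multiplicativity of the functional calculus gives $p_1^2 = p_1$ and $p_1^* = p_1$, so $p_1$ is a projection, and it lies in $C'$ precisely because $C'$ is closed under the holomorphic functional calculus. For the estimate $\|p - p_1\| < \d$ I would use $\th(p) = p$ and write $p_1 - p = \th(a) - \th(p)$ as a Cauchy integral over a fixed contour $\g$ around $\{0,1\}$ avoiding $1/2$, applying the resolvent identity $(z-a)^{-1} - (z-p)^{-1} = (z-a)^{-1}(a-p)(z-p)^{-1}$; this yields $\|p_1 - p\| \le C\|a - p\|$ with $C$ depending only on $\g$, so a fine enough initial approximation forces $\|p_1 - p\| < \d$.

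For the homotopy, assume now $\d < 1$, so $\|p - p_1\| < 1$. I would form the straight-line path $a_t = (1-t)p + tp_1$ of self-adjoint elements and round each by the same $\th$, setting $q_t = \th(a_t)$; then $q_0 = \th(p) = p$, $q_1 = \th(p_1) = p_1$, and each $q_t$ is a projection as soon as $1/2 \notin \s(a_t)$. The hard part will be verifying this last point for \emph{all} $t$ under the weak hypothesis $\d < 1$ rather than $\d < 1/2$. I would handle it by using both estimates $\|a_t - p\| \le t\d$ and $\|a_t - p_1\| \le (1-t)\d$: since $t\d$ and $(1-t)\d$ cannot both reach $1/2$ when $\d < 1$, for each $t$ at least one of these confines $\s(a_t)$ to within distance $< 1/2$ of $\{0,1\}$, hence off $1/2$. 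Equivalently, writing $2a_t - 1 = (1-t)(2p-1) + t(2p_1-1)$ as a convex combination of the symmetries $2p-1$ and $2p_1-1$ (each invertible with inverse itself), invertibility of $2a_t - 1$ follows because it lies within norm $1$ of one of these two symmetries, which is exactly the statement $1/2 \notin \s(a_t)$.

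Finally I would invoke continuity of the holomorphic functional calculus along $t \mapsto a_t$: since the spectra $\s(a_t)$ stay in a common open set avoiding $1/2$ locally in $t$, the contour defining $\th$ can be fixed on each small subinterval, and the Cauchy-integral representation makes $t \mapsto q_t = \th(a_t)$ norm-continuous. This produces a continuous path of projections in $\cC$ joining $p$ to $p_1$, as required. I expect the only genuine subtlety to be the spectral-gap argument in the homotopy step; the rest is routine functional-calculus continuity and the resolvent estimate.
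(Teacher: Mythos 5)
Your proof is correct, and it is essentially the argument the paper defers to without repeating (proposition 3.1 of \cite{R17}; see also section~3.8 of \cite{GVF}): approximate $p$ by a self-adjoint $a \in C'$, round it to a projection $p_1 = \th(a) \in C'$ via the holomorphic function $\th$ that is $0$ near $0$ and $1$ near $1$, and obtain the homotopy from the standard fact that projections at distance less than $1$ are homotopic. Your self-contained spectral-gap verification along the segment $a_t = (1-t)p + t p_1$, playing the two estimates $\|a_t - p\| \le t\d$ and $\|a_t - p_1\| \le (1-t)\d$ against each other so that $1/2 \notin \s(a_t)$ for all $t$, is precisely the standard proof of that homotopy fact as found in \cite{Blk, RLL}.
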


We apply this result to $\{c \in \cC: L^\cC(c) <  \infty\}$. The next
proposition is almost exactly proposition 3.3 of \cite{R17}. We will
not repeat the proof here. It involves Proposition \ref{prop2.3} 
and a mildly complicated
argument involving contour integrals.

\begin{prop}
\label{prop3.3}
Let $\cC$ be a unital C*-algebra
and let $L^\cC$ be a 
lower-semicontinuous strongly-Leibniz slip-norm on $\cC$.  Let
$p_0$ and 
$p_1$ be two projections in $\cC$.  
Suppose that $\|p_0-p_1\| \le \d < 1$, so 
that there is a norm-continuous path, $t \mapsto p_t$, of projections
in $\cC$ 
going from $p_0$ to $p_1$ \cite{Blk, RLL}.  If $L^\cC(p_0) < \infty$ 
and $L^\cC(p_1) < \infty$,
then we can arrange that 
\[
L^\cC(p_t) \le (1-\d)^{-1} \max\{L^\cC(p_0),L^\cC(p_1)\}
\]
for every $t$.  
\end{prop}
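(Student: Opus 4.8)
The plan is to produce the path explicitly, by linear interpolation followed by holomorphic functional calculus, and then to read the bound on $L^\cC(p_t)$ directly off Proposition \ref{prop2.3}. Set
\[
c_t = (1-t)p_0 + tp_1, \qquad t \in [0,1].
\]
Each $c_t$ is self-adjoint with $0 \le c_t \le 1$, and $t \mapsto c_t$ is affine and norm-continuous, so by subadditivity and homogeneity of the seminorm $L^\cC(c_t) \le (1-t)L^\cC(p_0) + tL^\cC(p_1) \le \max\{L^\cC(p_0), L^\cC(p_1)\}$; in particular $L^\cC(c_t) < \i$. I will define $p_t$ to be the spectral projection of $c_t$ onto the part of its spectrum lying above $1/2$, realized as a Riesz integral so that Proposition \ref{prop2.3} applies.

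First I would establish a spectral gap around $1/2$ that is \emph{uniform} in $t$. Writing $q_i = p_i - \tfrac12 1$ one has $q_i^2 = \tfrac14 1$, and a direct expansion using $p_0p_1 + p_1p_0 = p_0 + p_1 - (p_0-p_1)^2$ yields the identity
\[
\bigl(c_t - \tfrac12 1\bigr)^2 = \tfrac14 1 - t(1-t)(p_0 - p_1)^2.
\]
Since $0 \le t(1-t) \le \tfrac14$ and $0 \le (p_0-p_1)^2 \le \d^2 1$, this gives $\bigl(c_t - \tfrac12 1\bigr)^2 \ge \tfrac14(1-\d^2)\,1$, so $\s(c_t)$ avoids the open interval $(\tfrac12 - \kappa, \tfrac12 + \kappa)$ with $\kappa = \tfrac12\sqrt{1-\d^2} > 0$, uniformly in $t$. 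Hence there is a holomorphic $\th$, equal to $1$ near $[\tfrac12+\kappa,1]$ and to $0$ near $[0,\tfrac12-\kappa]$, and $p_t = \th(c_t)$ is a genuine projection, equal to $\tfrac{1}{2\pi i}\oint_\g (z - c_t)^{-1}\,dz$ for any contour $\g$ enclosing $\s(c_t)\cap[\tfrac12+\kappa,1]$ and no other spectrum. With a fixed such $\g$ the map $t \mapsto p_t$ is norm-continuous (the gap being uniform), and $c_0 = p_0$, $c_1 = p_1$ force $p_0$ and $p_1$ as endpoints.

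For the estimate I would use, not the final form of Proposition \ref{prop2.3}, but the sharper pointwise inequality obtained in its proof, namely $L^\cC(\th(c)) \le \tfrac{1}{2\pi}\int_\g |\th(z)|\,\|(z-c)^{-1}\|^2\,L^\cC(c)\,d|z|$. Take $\g$ to be the boundary of the square with vertical sides $\Re z = \tfrac12$ and $\Re z = H$ and horizontal sides $\Im z = \pm H$; there $\th \equiv 1$. Since $c_t$ is self-adjoint, $\|(z-c_t)^{-1}\| = \dist(z, \s(c_t))^{-1}$, and on the crucial left side $z = \tfrac12 + iy$ this is at most $(\kappa^2 + y^2)^{-1/2}$. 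The top, bottom, and right sides recede from $\s(c_t)$ and contribute $o(1)$ as $H \to \i$, while the left side contributes $\tfrac{1}{2\pi}\int_{-\i}^{\i}(\kappa^2+y^2)^{-1}\,dy = \tfrac{1}{2\kappa}$. As $L^\cC(p_t)$ is a fixed number bounded by each finite-$H$ estimate, letting $H \to \i$ gives
\[
L^\cC(p_t) \le \frac{1}{2\kappa}\,L^\cC(c_t) = \frac{1}{\sqrt{1-\d^2}}\,L^\cC(c_t) \le \frac{1}{1-\d}\,\max\{L^\cC(p_0), L^\cC(p_1)\},
\]
where the last step uses $\sqrt{1-\d^2} \ge 1-\d$ for $\d \in [0,1)$; this is the asserted bound.

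The routine points are the algebraic identity and the endpoint/continuity checks. The one place needing care — the ``mildly complicated contour argument'' — is the extraction of the sharp constant: one must integrate the \emph{pointwise} resolvent bound along the vertical line $\Re z = \tfrac12$ rather than use the crude length-times-$M_\g(c_t)$ form of Proposition \ref{prop2.3}, which diverges for such a contour, and one must verify that the other three sides of the square vanish in the limit (letting the right side $\Re z = H$ and the height $\Im z = \pm H$ tend to infinity together). This is precisely where the value $\tfrac{1}{2\kappa}$, and hence the clean constant $(1-\d)^{-1}$, comes from.
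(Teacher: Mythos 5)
Your proof is correct, and it takes essentially the route the paper indicates for this result (the one carried out in proposition 3.3 of \cite{R17}): the linear path $c_t = (1-t)p_0 + tp_1$, the spectral-gap identity $(c_t - \tfrac12 1)^2 = \tfrac14 1 - t(1-t)(p_0-p_1)^2$, the Riesz projection via holomorphic functional calculus, and the pointwise resolvent inequality from the proof of Proposition \ref{prop2.3} integrated along the line $\Re z = \tfrac12$ with the far sides of the contour receding to infinity. Your write-up correctly supplies the details the paper omits, including the sharper intermediate constant $(1-\d^2)^{-1/2} \le (1-\d)^{-1}$.
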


We now let $\cA$ be a unital C*-algebra that is
a quotient of $\cC$, with $\pi: \cC \to \cA$ the quotient map (such as
the evident quotient map from our earlier  $\cA \oplus \cB$ onto $\cA$).
We let $L^\cA$ be the quotient of $L^\cC$ on $\cA$, and we assume
that $L^\cA$ is semi-finite, lower semi-continuous, 
and strongly Leibniz (which is not 
automatic --- see section 5 of \cite{R21}). 
Motivated by Key Lemma \ref{keylem}, we will be making hypotheses such
as that there is an $\e > 0$ (such as $h^\Pi + r$) such that 
\[
\|c\| \leq \|\pi(c)\| + \e L^\cC(c)
\]
for all $c \in \cC$. The next
proposition is our non-commutative version of theorem 4.2
of \cite{R17}.

\begin{thm}
\label{thm4.2}
Let $\cC, \cA, \pi, L^\cC$,and $L^\cA$ be as above. Suppose given
an $\e > 0$ such that for all $c \in \cC$ with $c^* = c$ we have
\[
\|c\| \leq \| \pi(c)\| + \e L^\cC(c).
\]
 Let $p_0$ and $p_1$ be
projections 
in $\cA$, and let $q_0$ and $q_1$ be projections in $\cC$
such 
that $\pi(q_0) = p_0$ and $\pi(q_1) = p_1$.  Set
\[
\d = \|p_0-p_1\| + \e(L^\cC(q_0) + L^\cC(q_1)).
\]
If $\d < 1$, then there is a path, $t \mapsto q_t$, through
projections 
in $\cC$, from $q_0$ to $q_1$, such that
\[
L^\cC(q_t) \le (1-\d)^{-1} \max\{L^\cC(q_0),L^\cC(q_1)\}
\]
for all $t \in [0,1]$.  
\end{thm}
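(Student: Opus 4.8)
The plan is to deduce this directly from Proposition \ref{prop3.3}, which already produces, for any two nearby projections in $\cC$ with finite Lip-norm, a norm-continuous path of projections within $\cC$ obeying exactly the asserted Lip-norm bound. The only work is to verify that the two hypotheses of Proposition \ref{prop3.3} --- a norm bound $\|q_0 - q_1\| \le \d < 1$ and finiteness of $L^\cC(q_0)$ and $L^\cC(q_1)$ --- are supplied by the present assumptions.

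First I would establish the norm estimate $\|q_0 - q_1\| \le \d$. Since $q_0$ and $q_1$ are projections, the element $c = q_0 - q_1$ is self-adjoint, so the standing hypothesis $\|c\| \le \|\pi(c)\| + \e L^\cC(c)$ applies to it. Now $\pi(q_0 - q_1) = \pi(q_0) - \pi(q_1) = p_0 - p_1$, and by subadditivity of the seminorm $L^\cC$ we have $L^\cC(q_0 - q_1) \le L^\cC(q_0) + L^\cC(q_1)$. Substituting these two facts into the hypothesis yields
\[
\|q_0 - q_1\| \le \|p_0 - p_1\| + \e\bigl(L^\cC(q_0) + L^\cC(q_1)\bigr) = \d,
\]
which is the desired norm bound, and by assumption $\d < 1$.

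Next I would note that the finiteness requirement is automatic: since $\d < 1$ is finite and $\e > 0$, the defining expression $\d = \|p_0 - p_1\| + \e(L^\cC(q_0) + L^\cC(q_1))$ forces both $L^\cC(q_0) < \infty$ and $L^\cC(q_1) < \infty$. With the norm bound $\|q_0 - q_1\| \le \d < 1$ and both Lip-norms finite in hand, I would apply Proposition \ref{prop3.3} to the pair $q_0, q_1$ in $\cC$, using the bound $\d$ (which Proposition \ref{prop3.3} permits as an upper estimate for $\|q_0 - q_1\|$ rather than requiring equality). This produces a norm-continuous path $t \mapsto q_t$ of projections in $\cC$ from $q_0$ to $q_1$ with $L^\cC(q_t) \le (1-\d)^{-1}\max\{L^\cC(q_0), L^\cC(q_1)\}$ for all $t$, exactly as claimed.

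There is no serious obstacle here; the theorem is a clean corollary of Proposition \ref{prop3.3} once the norm estimate is in place. The one point demanding care is the derivation of $\|q_0 - q_1\| \le \d$: it is essential that $q_0 - q_1$ be self-adjoint, so that the hypothesis inequality --- assumed only for self-adjoint $c$ --- can be invoked, and that the cross term be controlled via subadditivity of $L^\cC$ rather than any finer structure of the seminorm.
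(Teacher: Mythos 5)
Your proposal is correct and follows essentially the same route as the paper: the paper's proof likewise applies the hypothesis inequality to the self-adjoint element $q_0 - q_1$, uses $\pi(q_0-q_1) = p_0 - p_1$ together with subadditivity of $L^\cC$ to get $\|q_0 - q_1\| \le \d$, and then invokes Proposition~\ref{prop3.3}. Your two explicit verifications --- that $q_0 - q_1$ is self-adjoint so the hypothesis applies, and that $\d < 1$ forces $L^\cC(q_0), L^\cC(q_1) < \infty$ --- are left implicit in the paper but are exactly the right points to check.
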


\begin{proof}
From the hypotheses we see that
\begin{eqnarray*}
\|q_0-q_1\| &\le &\|\pi(q_0-q_1)\| + \e L^\cC(q_0-q_1) \\
&\le &\|p_0-p_1\| + \e(L^\cC(q_0) + L^\cC(q_1)) = \d.
\end{eqnarray*}
Assume now that $\d < 1$.  Then according to Proposition~\ref{prop3.3} 
applied to $q_0$ and $q_1$ 
there is a path $t \to q_t$ from $q_0$ to 
$q_1$ with the stated properties.
\end{proof}

If $p_0 = p_1$ above then
we can obtain some additional information. The following proposition
is almost exactly proposition 4.3 of \cite{R17}. We do not
repeat the proof here.

\begin{prop}
\label{prop4.3}
With hypotheses as above, 
let $p \in \cA$, and let $q_0$ and $q_1$ be projections in
$\cC$ 
such that $\pi(q_0) = p = \pi(q_1)$.  If $\e L^\cC(q_0) < 1/2$ and 
$\e L^\cC(q_1) < 1/2$, then there is a path, $t \to q_t$, through
projections 
in $\cC$, from $q_0$ to $q_1$, such that $\pi(q_t) = p$ and 
\[
L^\cC(q_t) \le (1-\d)^{-1} \max\{L^\cC(q_0),L^\cC(q_1)\}
\]
for all $t$, where $\d = \e(L^\cC(q_0) + L^\cC(q_1))$.  
\end{prop}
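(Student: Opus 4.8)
The plan is to reduce the statement to Proposition~\ref{prop3.3}, which already supplies a path of projections together with the desired bound on $L^\cC$, and then to verify the one extra feature that distinguishes this proposition from Theorem~\ref{thm4.2}: that the path can be kept inside the single fiber $\pi^{-1}(p)$.

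First I would bound $\|q_0 - q_1\|$. Since $\pi(q_0) = p = \pi(q_1)$, the self-adjoint element $q_0 - q_1$ lies in the kernel of $\pi$, so $\pi(q_0 - q_1) = 0$. Applying the standing hypothesis to $c = q_0 - q_1$ gives
\[
\|q_0 - q_1\| \leq \|\pi(q_0 - q_1)\| + \e L^\cC(q_0 - q_1) \leq \e\bigl(L^\cC(q_0) + L^\cC(q_1)\bigr) = \d.
\]
The assumptions $\e L^\cC(q_0) < 1/2$ and $\e L^\cC(q_1) < 1/2$ give $\d < 1$, so $\|q_0 - q_1\| \leq \d < 1$. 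Proposition~\ref{prop3.3}, applied with this value of $\d$, then produces a norm-continuous path $t \mapsto q_t$ of projections in $\cC$ from $q_0$ to $q_1$ satisfying $L^\cC(q_t) \leq (1-\d)^{-1}\max\{L^\cC(q_0), L^\cC(q_1)\}$ for all $t$. This already yields the asserted bound; what remains is the fiber condition $\pi(q_t) = p$.

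For this I would open up the construction behind Proposition~\ref{prop3.3} rather than treat it as a black box. The projections $q_t$ arise from holomorphic functional calculus applied to the self-adjoint affine combination $a_t = (1-t)q_0 + t q_1$, for which a direct computation gives
\[
a_t - a_t^2 = t(1-t)(q_0 - q_1)^2,
\]
so that $\|a_t - a_t^2\| \leq \tfrac14\|q_0 - q_1\|^2 < \tfrac14$ uniformly in $t$. Consequently each $a_t$ satisfies $0 \le a_t \le 1$ with $\s(a_t)$ bounded away from $1/2$ by a fixed amount, splitting into a piece near $0$ and a piece near $1$. A single function $\chi$, holomorphic on a neighborhood of $\bigcup_t \s(a_t)$ and equal to $1$ near $1$ and $0$ near $0$, then defines the projection $q_t = \chi(a_t)$, continuous in $t$, and the bound on $L^\cC(q_t)$ follows from Proposition~\ref{prop2.3} exactly as in the proof of Proposition~\ref{prop3.3}.

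The fiber condition is then immediate: $\pi$ is a $*$-homomorphism, hence intertwines the holomorphic functional calculus, and $\pi(a_t) = (1-t)p + t p = p$, so
\[
\pi(q_t) = \pi(\chi(a_t)) = \chi(\pi(a_t)) = \chi(p) = p,
\]
the last equality holding because $\s(p) \subseteq \{0,1\}$ while $\chi$ is $1$ near $1$ and $0$ near $0$. I expect the main obstacle to be precisely this final step: one cannot simply cite Proposition~\ref{prop3.3}, since its statement carries no information about $\pi(q_t)$; the point is to exhibit the path construction as natural with respect to the quotient map $\pi$, which the contour-integral description makes transparent. Everything else is a routine application of the standing inequality and of Proposition~\ref{prop3.3}.
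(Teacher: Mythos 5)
Your proposal is correct and takes essentially the same route as the paper, which does not repeat the argument but defers to proposition 4.3 of \cite{R17}: there, as in your write-up, the standing inequality applied to the self-adjoint element $q_0-q_1 \in \ker\pi$ gives $\|q_0-q_1\|\le\d<1$, and the path of Proposition \ref{prop3.3} is the functional-calculus path $q_t=\chi\bigl((1-t)q_0+tq_1\bigr)$, which stays in the fiber because $\pi$ intertwines the holomorphic functional calculus and $\chi(p)=p$. Your computation $a_t-a_t^2=t(1-t)(q_0-q_1)^2$ and the uniform spectral gap around $1/2$ are exactly the ingredients that make this work, so there is nothing to correct.
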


By concatenating paths, 
we can combine the above results to obtain some information that does
not depend on $p_0$ and $p_1$ being close together. The next
proposition is almost exactly corollary 4.4 of \cite{R17}. 

\begin{cor}
\label{cor4.4}
Let $p_0$ and $p_1$ be projections in $\cA$, and let $q_0$ and 
$q_1$ be projections in $\cC$ such that $\pi(q_0) = p_0$ and 
$\pi(q_1) = p_1$.  Let $K$ be a constant such that $L^\cC(q_j) \le K$ for 
$j = 0,1$.  Assume further that there is a path $p$ from $p_0$ to 
$p_1$ such that for each $t$ there is a projection ${\tilde q}_t$ in 
$\cC$ such that $\pi({\tilde q}_t) = p_t$ 
and $L^\cC({\tilde q}_t) \le K$.  
Then for any $r > 1$ there is a continuous 
path $t \mapsto q_t$ of projections in 
$\cC$ going from $q_0$ to $q_1$ such that
\[
L^\cC(q_t) \le rK
\]
for each $t$.  (But we may not have $\pi(q_t) = p_t$ for all $t$.)  
\end{cor}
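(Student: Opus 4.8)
The plan is to build the path by subdividing the parameter interval, lifting the intermediate projections at the subdivision points, and then invoking Theorem \ref{thm4.2} on each subinterval to join consecutive lifts by a path in $\cC$ along which $L^\cC$ is controlled.

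First I would use that the path $t \mapsto p_t$ is norm-continuous on the compact interval $[0,1]$, hence uniformly continuous, to choose a partition $0 = t_0 < t_1 < \dots < t_n = 1$ with $\|p_{t_i} - p_{t_{i+1}}\|$ as small as we like, say less than a number $\eta$ to be fixed at the end. At each partition point I choose a lift in $\cC$ with $L^\cC \le K$: at the endpoints I simply take the given $q_0$ at $t_0$ and $q_1$ at $t_n$, and for $0 < i < n$ I take the hypothesized projection $\tilde q_{t_i}$ with $\pi(\tilde q_{t_i}) = p_{t_i}$ and $L^\cC(\tilde q_{t_i}) \le K$.

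Next, for each $i$ I would apply Theorem \ref{thm4.2} to the pair $\tilde q_{t_i}, \tilde q_{t_{i+1}}$, which lift $p_{t_i}, p_{t_{i+1}}$. With $\delta_i = \|p_{t_i} - p_{t_{i+1}}\| + \e(L^\cC(\tilde q_{t_i}) + L^\cC(\tilde q_{t_{i+1}}))$, provided $\delta_i < 1$, this gives a path of projections in $\cC$ from $\tilde q_{t_i}$ to $\tilde q_{t_{i+1}}$ along which $L^\cC \le (1-\delta_i)^{-1}\max(L^\cC(\tilde q_{t_i}), L^\cC(\tilde q_{t_{i+1}})) \le (1-\delta_i)^{-1} K$. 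Since the two sub-paths meeting at an interior node $t_{i+1}$ share the \emph{same} endpoint $\tilde q_{t_{i+1}}$, concatenating and reparametrizing the $n$ sub-paths produces a single continuous path of projections in $\cC$ from $q_0$ to $q_1$. The point worth emphasizing is that each sub-path bound is taken relative to its own two endpoints, both of which are $\le K$, so there is no multiplicative accumulation across the concatenation: the assembled path satisfies $L^\cC(q_t) \le \max_i (1-\delta_i)^{-1} K$.

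The main obstacle is the final quantitative step: arranging $\max_i (1-\delta_i)^{-1} \le r$. Refining the partition drives the term $\|p_{t_i} - p_{t_{i+1}}\|$ in $\delta_i$ to $0$, but it leaves untouched the contribution $\e(L^\cC(\tilde q_{t_i}) + L^\cC(\tilde q_{t_{i+1}})) \le 2\e K$, which is the irreducible cost of switching between distinct lifts and is exactly the term furnished by the hypothesis $\|c\| \le \|\pi(c)\| + \e L^\cC(c)$ (applied to $c = \tilde q_{t_i} - \tilde q_{t_{i+1}}$ inside Theorem \ref{thm4.2}). Thus the delicate point is to verify that this residual $\e$-contribution stays below $1 - r^{-1}$, so that $\delta_i < 1$ and $(1-\delta_i)^{-1} \le r$ hold on every subinterval once $\eta$ is small enough; this is where the smallness of $\e$ relative to $K$ in the setting at hand enters. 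I do not expect to need Proposition \ref{prop4.3} for this argument, since using the common lift $\tilde q_{t_{i+1}}$ as the shared endpoint of adjacent sub-paths already makes the concatenation continuous, with no separate lift-switching step at the interior nodes.
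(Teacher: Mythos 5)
Your proposal is correct and is essentially the paper's own argument: the paper's proof just fixes $\d$ with $(1-\d)^{-1} < r$, takes $\e$ with $2\e K < \d$, and then invokes the subdivision-and-concatenation proof of corollary 4.4 of \cite{R17} with $N = K$ --- which is precisely your scheme of a fine partition, node lifts $\tilde q_{t_i}$, Theorem \ref{thm4.2} on each subinterval, and endpoint-sharing concatenation, with no use of Proposition \ref{prop4.3}. Your closing observation is also on target: the residual $2\e K$ contribution to each $\d_i$ cannot be removed by refining the partition, so one needs $2\e K < \d < 1 - r^{-1}$, which is exactly the smallness of $\e$ that the paper's proof arranges (and that appears explicitly as the hypothesis $2\e s' < \d < 1$ in Theorem \ref{th4.7}).
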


\begin{proof}
Given $r>1$, choose $\d > 0$ such that $(1-\d)^{-1} < r$,
and then choose an $\e > 0$ such that $2 \e K < \d$.
Then follow the proof of corollary 4.4 of \cite{R17} with
$N = K$. 
\end{proof}

Let us now see what consequences the above uniqueness results have 
when we have a bridge between two C*-algebras 
that are equipped with
suitable seminorms. 
Let $\cA$ and $\cB$ 
be unital C*-algebras and let
$\Pi = (\cD, \o)$ be a bridge from $\cA$ to $\cB$ . 
Let $L^\cA$ and $L^\cB$ be semi-finite lower-semi-continuous
strongly Leibniz slip-norms on $\cA$ and $\cB$. 
We use them to measure $\Pi$, 
and we assume that $r_\Pi$ is finite.
Let $\cC = \cA \oplus \cB$. 
Let $r \geq r_\Pi$ be chosen, and define
the seminorm $L^r$ on the C*-algebra $\cC$ by 
equation \ref{eqlr}.
Note that $L^r$ is admissible for $L^\cA$ and $L^\cB$
by Proposition \ref{admis}.
A projection in $\cC$ will now be of the form $(p,  q)$ where 
$p$ and $q$ are projections in $\cA$ and $\cB$ respectively.  
Roughly speaking, our main 
idea is that $p$ and $q$ will correspond if 
$L^r(p, q)$ is relatively small.  Notice that which projections 
then correspond to each other will strongly depend 
on the choice of $\Pi$ (just
as in \cite{R17}, where it was seen that which projections 
for ordinary compact metric spaces correspond
depends strongly on the choice of the metric that is put on the
disjoint union of the two metric spaces, as would be expected).  
We will only consider that 
projections correspond (for a given bridge $\Pi$) if there is some
uniqueness 
to the correspondence.  The following theorem gives appropriate 
expression for this uniqueness. It is our non-commutative
generalization of theorem 4.5 of \cite{R17}, and it
is an immediate consequence of Key Lemma \ref{keylem},   
Proposition~\ref{prop4.3}, and then Theorem~\ref{thm4.2}. 
The role of the $\e$ in Proposition~\ref{prop4.3} 
and Theorem~\ref{thm4.2} is now played by $h_\Pi + r$.  

\begin{thm}
\label{th4.5}
Let $\cA$ and $\cB$ be unital C*-algebras, and let 
$\Pi = (\cD, \o)$ be a bridge from $\cA$ to $\cB$.
Let $L^\cA$ and $L^\cB$ be 
lower semi-continuous strongly-Leibniz slip-norms 
on $\cA$ and $\cB$. Assume that the length of $\Pi$
as measured by $L^\cA$ and $L^\cB$ is finite. Let
$r \geq r_\Pi$ be chosen, and define $L^r$ on $\cC = \cA \oplus \cB$
by equation \ref{eqlr}.
\begin{itemize} 
\item[{\em a)}] Let $p \in \cA$ and $q \in \cB$ be
projections, 
and suppose that 
\[
(h_\Pi + r) L^r(p, q) < 1/2. 
\]
 If $q_1$ is another 
projection in $\cB$ such that $(h_\Pi + r) L^r(p, q_1) < 1/2$, then
there 
is a path $t \mapsto q_t$ through projections in $\cB$, going
from $q$ to $q_1$, such that
\[
L^r(p,q_t) \le (1-\d)^{-1} \max\{L^r(p, q),L^r(p, q_1)\}
\]
for all $t$, where $\d = (h_\Pi + r)(L^r(p, q) + L^r(p, q_1))$.  If
instead there is a 
$p_1 \in \cA$ such that $(h_\Pi + r) L^r(p_1, q) < 1/2$ then there is
a corresponding path from $p$ to $p_1$ with corresponding bound for 
$L^r(p_t, q)$.
\item[{\em b)}] Let $p_0$ and $p_1$ be projections in $\cA$ and
let 
$q_0$ and $q_1$ be projections in $\cB$.  Set
\[
\d = \|p_0-p_1\| + (h_\Pi + r)(L^r(p_0, q_0) + L^r(p_1, q_1)).
\]
If $\d < 1$ then there are continuous paths $t \mapsto p_t$ and 
$t \mapsto q_t$ from $p_0$ to $p_1$ and $q_0$ to $q_1$, respectively, 
through projections, such that
\[
L^r(p_t, q_t) \le (1-\d)^{-1} \max\{L^r(p_0, q_0), L^r(p_1, q_1)\}
\]
for all $t$.
\end{itemize}
\end{thm}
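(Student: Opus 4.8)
The plan is to reduce the whole statement to the three results that precede it, by exploiting the direct-sum structure of $\cC = \cA \oplus \cB$. The first thing I would record is the purely algebraic observation that a projection in $\cC$ is exactly a pair $(p, q)$ with $p$ a projection in $\cA$ and $q$ a projection in $\cB$, and that a norm-continuous path of projections $t \mapsto (p_t, q_t)$ in $\cC$ is the same data as a pair of norm-continuous paths of projections, one in $\cA$ and one in $\cB$, with the prescribed endpoints. Thus every statement below about homotopies of projections in $\cA$ and $\cB$ (one summand fixed, or both moving) can be rephrased as a statement about homotopies of projections in the single algebra $\cC$. As noted at the start of this section, $L^r$ is a semi-finite, lower semi-continuous, strongly Leibniz slip-norm on $\cC$, so that Theorem \ref{thm4.2} and Proposition \ref{prop4.3} are applicable to $(\cC, L^r)$.

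The crucial input is Key Lemma \ref{keylem}, which supplies exactly the hypothesis ``$\|c\| \leq \|\pi(c)\| + \e\, L^\cC(c)$'' required by Proposition \ref{prop4.3} and Theorem \ref{thm4.2}, and does so with the explicit value $\e = h_\Pi + r$. Writing $\pi_\cA \colon \cC \to \cA$ and $\pi_\cB \colon \cC \to \cB$ for the two coordinate quotient maps, the main assertion of the Key Lemma, $\|(a,b)\| \leq \|a\| + (h_\Pi + r) L^r(a,b)$, is precisely this inequality for $\pi_\cA$, and the ``roles interchanged'' assertion is the same inequality for $\pi_\cB$. So for either coordinate quotient the needed inequality holds with $\e = h_\Pi + r$.

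For part a) I would fix $p$ and apply Proposition \ref{prop4.3} to the quotient $\pi_\cA$, with the two projections $(p, q)$ and $(p, q_1)$ in $\cC$, both of which have $\pi_\cA$-image $p$. The hypotheses $\e\, L^r(p,q) < 1/2$ and $\e\, L^r(p, q_1) < 1/2$ are exactly the assumptions $(h_\Pi + r) L^r(p, q) < 1/2$ and $(h_\Pi + r) L^r(p, q_1) < 1/2$. Proposition \ref{prop4.3} then produces a path of projections in $\cC$ with $\pi_\cA$-image constantly $p$, hence of the form $(p, q_t)$ for a path $t \mapsto q_t$ of projections in $\cB$ from $q$ to $q_1$, together with the stated bound on $L^r(p, q_t)$ and the stated value of $\d = (h_\Pi + r)(L^r(p, q) + L^r(p, q_1))$. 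The symmetric statement, fixing $q$ and moving $p$ to $p_1$, is obtained identically using $\pi_\cB$ in place of $\pi_\cA$ and the ``roles interchanged'' half of the Key Lemma. For part b) I would instead apply Theorem \ref{thm4.2} to $\pi_\cA$ with the projections $(p_0, q_0)$ and $(p_1, q_1)$, whose $\pi_\cA$-images are $p_0$ and $p_1$; the quantity $\|p_0 - p_1\| + \e(L^r(p_0, q_0) + L^r(p_1, q_1))$ with $\e = h_\Pi + r$ coincides with the $\d$ in the statement, so under $\d < 1$ the theorem yields a path $t \mapsto (p_t, q_t)$ of projections in $\cC$ with the required $L^r$-bound, and reading off the two coordinates gives the desired paths $t \mapsto p_t$ and $t \mapsto q_t$.

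Since each step is a direct citation after matching the data, there is no serious obstacle here; the theorem really is an immediate consequence of the three cited results. The only point requiring care is the bookkeeping: correctly pairing each sub-statement with the appropriate coordinate quotient $\pi_\cA$ or $\pi_\cB$ and the corresponding half of the Key Lemma, verifying that a projection path in $\cC$ genuinely decomposes into coordinate paths with the right endpoints, and checking that both the quantity $\d$ and the threshold $1/2$ transcribe verbatim under the substitution $\e = h_\Pi + r$.
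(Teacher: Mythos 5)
Your proposal is correct and takes essentially the same route as the paper, which gives no separate argument but states that Theorem \ref{th4.5} is an immediate consequence of Key Lemma \ref{keylem}, Proposition \ref{prop4.3}, and Theorem \ref{thm4.2}, with the $\e$ of those results played by $h_\Pi + r$ --- precisely the matching you carry out. Your bookkeeping (projections in $\cC = \cA \oplus \cB$ as pairs, the quotient $\pi_\cA$ for part a) and part b), $\pi_\cB$ together with the interchanged half of the Key Lemma for the symmetric statement) correctly fills in the details the paper leaves implicit.
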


We remark that a more symmetric way of stating part b) above is to 
define $\d$ by
\[
\d = \max\{\|p_0-p_1\|,\|q_0-q_1\|\} + 
(h_\Pi + r)(L^r(p_0, q_0),L^r(p_1, q_1)).
\]

Let us now examine the consequences of Corollary~\ref{cor4.4}.  
This is best phrased in terms of:

\begin{notation}
\label{not4.6}
Let $\cP(\cA)$ denote the set of projections in
$\cA$.  
For any $s \in \bR^+$ let
\[
\cP^s(\cA) = \{p \in \cP(\cA): L^\cA(p) < s\},
\]
and similarly for $\cB$ and $\cC$.
\end{notation}

Now $\cP^s(\cA)$ may have many path components. 
As suggested by the main results
of \cite{R17}, it may well be appropriate, indeed necessary, to
view these different path components as 
representing \emph{inequivalent} vector
bundles, even if algebraically the vector bundles are isomorphic. 
That is the main idea of \cite{R17}, and of the present paper.
(Some additional perspective on this idea will be given
in Section \ref{secsums}.) 
Let $\S$ be one of these path
components.  
Let $\Phi_\cA$ denote the evident restriction map from $\cP(\cC)$ to $\cP(\cA)$ (for 
$\cC = \cA  \oplus  \cB$).  For a given $s' \in \bR^+$ with $s' \ge s$ it may be
that 
$\Phi_\cA(\cP^{s'}(\cC)) \cap \S$ is non-empty.  This is an existence
question, which we will not deal with here.  But at this point, from 
Corollary \ref{cor4.4} we obtain our non-commutative version of
theorem 4.7 of \cite{R17}, namely:

\begin{thm}
\label{th4.7}
Let notation be as above, and assume that $\mathrm{length}(\Pi) < \e$.  
Let $s \in \bR^+$ with $\e s < 1/2$.  Let $\S$ be a path component of 
$\cP^s(\cA)$.  Let $s' \in \bR^+$ with $s' \ge s$ and $\e s' < 1/2$.  Let 
$p_0, \ p_1\in \S$ and suppose that there are $q_0$ and $q_1$ in 
$\cP^{s'}(\cB)$ with $L^r(p_j , q_j) \le s'$ for $j = 0,1$.  Assume, 
even more, that there is a path $\tilde p$ in $\S$ connecting $p_0$
and 
$p_1$ that lies in $\Phi_\cA(\cP^{s'}(\cC))$.  Then for any $\d$ with 
$2\e s' < \d < 1$ there exist a path $t \mapsto p_t$ 
in $\cP(\cA)$ going from $p_0$ to $p_1$ 
and a path $t \mapsto q_t$ in $\cP(\cB)$ going from $q_0$ to $q_1$ 
such that $L^r(p_t, q_t) < (1-\d)^{-1}s'$ for each $t$.  The
situation 
is symmetric between $\cA$ and $\cB$, 
so the roles of $\cA$ and $\cB$ can be 
interchanged in the above statement.
\end{thm}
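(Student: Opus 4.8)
The plan is to recognize Theorem~\ref{th4.7} as essentially nothing more than Corollary~\ref{cor4.4} applied to the unital C*-algebra $\cC = \cA \oplus \cB$, equipped with the slip-norm $L^r$ and with the coordinate quotient map $\Phi_\cA : \cC \to \cA$ onto the first summand. The one structural fact I would need in hand before invoking Corollary~\ref{cor4.4} is the key norm inequality $\|c\| \le \|\Phi_\cA(c)\| + \e L^r(c)$ for self-adjoint $c$; this is exactly what Key Lemma~\ref{keylem} delivers, with the constant $h_\Pi + r$ serving as the $\e$ of the results of this section (as the paper already notes just before Theorem~\ref{th4.5}), so that $\mathrm{length}(\Pi) < \e$ is the assumption that keeps this constant under control. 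Since $L^\cA$ and $L^\cB$ are semi-finite, lower semi-continuous and strongly Leibniz, the discussion at the start of Section~\ref{secproleib} shows $L^r$ is too, and Proposition~\ref{admis} shows its quotient on $\cA$ agrees with $L^\cA$ on self-adjoint elements, hence on projections; so all the regularity demanded of $(\cC, L^r, \Phi_\cA, L^\cA)$ by the chain of results culminating in Corollary~\ref{cor4.4} is in place.

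Next I would translate the data. A projection in $\cC$ is precisely a pair $(p,q)$ with $p \in \cP(\cA)$, $q \in \cP(\cB)$, and $\Phi_\cA(p,q) = p$. Thus $(p_0, q_0)$ and $(p_1, q_1)$ are projections in $\cC$ lifting $p_0$ and $p_1$, with $L^r(p_j, q_j) \le s'$, which is the hypothesis $L^\cC(Q_j) \le K$ of Corollary~\ref{cor4.4} with $K = s'$. The assumption that the path $\tilde p$ in $\S$ from $p_0$ to $p_1$ lies in $\Phi_\cA(\cP^{s'}(\cC))$ means exactly that for each $t$ there is a projection $(\tilde p_t, \tilde q_t) \in \cP^{s'}(\cC)$, i.e.\ a lift of $\tilde p_t$ with $L^r(\tilde p_t, \tilde q_t) < s' = K$; this furnishes the family of lifts required by Corollary~\ref{cor4.4}.

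With these identifications, given $\d$ with $2\e s' < \d < 1$ (a nonempty range, since the hypothesis $\e s' < 1/2$ gives $2\e s' < 1$), I would pick an auxiliary $\d'$ with $2\e s' < \d' < \d$ and apply Corollary~\ref{cor4.4} with scaling factor $(1-\d')^{-1} > 1$. This yields a continuous path $t \mapsto (p_t, q_t)$ of projections in $\cC$ from $(p_0, q_0)$ to $(p_1, q_1)$ with $L^r(p_t, q_t) \le (1-\d')^{-1} s' < (1-\d)^{-1} s'$ for every $t$. Reading off coordinates -- the projections $\Phi_\cA, \Phi_\cB$ are continuous and send projections to projections -- gives the desired continuous paths $t \mapsto p_t$ in $\cP(\cA)$ and $t \mapsto q_t$ in $\cP(\cB)$ with the stated bound. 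The symmetry between $\cA$ and $\cB$ follows by rerunning the argument with $\Phi_\cB$ in place of $\Phi_\cA$ and the corresponding form of Key Lemma~\ref{keylem}.

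The only genuine work lives inside Corollary~\ref{cor4.4} itself (imported from \cite{R17}), and that is also where the constraint $2\e s' < \d$ is forced: one subdivides $\tilde p$ and on each consecutive pair applies Theorem~\ref{thm4.2}, whose controlling quantity is $\d_i = \|\tilde p_{t_i} - \tilde p_{t_{i+1}}\| + \e\bigl(L^r(\tilde p_{t_i}, \tilde q_{t_i}) + L^r(\tilde p_{t_{i+1}}, \tilde q_{t_{i+1}})\bigr) \le \|\tilde p_{t_i} - \tilde p_{t_{i+1}}\| + 2\e s'$. Refining the subdivision drives the norm term to $0$ but leaves $2\e s'$ as an irreducible floor, so one needs $2\e s' < \d$ for $(1-\d_i)^{-1} \le (1-\d)^{-1}$ to hold uniformly and for the concatenated path to inherit the bound. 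Thus I expect the main obstacle to be not any new idea but the careful bookkeeping of the three constants -- $\e$ (effectively $h_\Pi + r$), $s'$, and $\d$ -- together with checking that $L^r$ and its quotient carry every regularity property needed to feed the implication Key Lemma~\ref{keylem} $\Rightarrow$ Theorem~\ref{thm4.2} $\Rightarrow$ Corollary~\ref{cor4.4}.
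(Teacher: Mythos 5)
Your proposal is correct and takes essentially the same route as the paper: the paper offers no separate argument for Theorem \ref{th4.7}, presenting it as an immediate consequence of Corollary \ref{cor4.4} applied to $\cC = \cA \oplus \cB$ with the seminorm $L^r$, the coordinate quotient $\Phi_\cA$, the constant $K = s'$, and $h_\Pi + r$ (controlled by $\mathrm{length}(\Pi) < \e$) playing the role of the $\e$ in the standing hypothesis via Key Lemma \ref{keylem} --- exactly your plan. Your explicit bookkeeping (the auxiliary $\d'$, the identification of the pair $(p,q)$ data, and the observation that the floor $2\e s'$ arises from the subdivision step inside Corollary \ref{cor4.4}) simply spells out what the paper leaves implicit.
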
  

Thus, in the situation described in the theorem, if $\S$  is
a connected path component of $\cP^s(\cA)$ that represents
some particular class of projective $\cA$-modules, 
then the projections $q \in \cP^s(\cB)$ 
paired with ones in $\S$ 
by the requirement that $L^r(p, q) < s$, will be homotopic, and in 
particular will determine isomorphic projective $\cB$-modules.  
We emphasize that 
the above pairing of projections depends strongly on 
the choice of $\Pi$, and not
just on the quantum Gromov-Hausdorff propinquity 
between $\cA$ and $\cB$.  
This reflects the fact that quantum Gromov--Hausdorff 
propinquity is only a metric on {\em isometry 
classes} of quantum compact metric spaces,
just as is the case for ordinary Gromov-Hausdorff
distance for ordinary compact metric spaces.

Notice that the homotopies obtained above 
between $q_0$ and $q_1$ need not 
lie in $\cP^s(\cB)$. We can only conclude that they lie in
$\cP^{s'}(\cB)$ 
where $s' = (1-\d)^{-1}s$.  But at least we can say that 
$s'$ approaches $s$ as $\e$, and so $\d$, goes to $0$.  

The results of this section suggest that the definition of
a ``C*-metric'' given in definition 4.1 of \cite{R21} should
be modified to use matrix seminorms, and so should
be given by:
\begin{defn}
Let $\cA$ be a unital C*-algebra. By a \emph{C*-metric}
on $\cA$ we mean a matrix Lip-norm (as defined in
Definition \ref{defmlip}), $\{L^\cA_n\}$, such that
each $L^\cA_n$ is strongly Leibniz. 
\end{defn}

We remark that in contrast to the contents of
section 6 of \cite{R17}, in the present paper
we do not include here any \emph{existence} theorems
for bundles on quantum spaces that are close
together. It appears that existence results
in the non-commutative case are more
difficult to obtain, but this matter remains
to be explored carefully.


\section{The algebras and the bridges}
\label{secalg}

In this section we will introduce the specific 
algebras and the bridges to which
we will apply the theory of the previous section. These are
described in \cite{R29} and in earlier papers on this topic,
but in greater generality than we use in the later parts of
the present paper. Nevertheless, here we will begin by 
reviewing this more general setting, since
it gives useful context, and the main results of this paper should
eventually be generalized to the more general setting. 

Let $G$ be a compact group (perhaps even finite at first, but later to be
$SU(2)$).  Let $U$ be an
irreducible unitary representation of $G$ on a 
(finite-dimensional) Hilbert space ${\mathcal
H}$.  Let $\cB = \cL({\mathcal H})$ denote the $C^*$-algebra of all linear
operators on ${\cH}$ (a ``full matrix algebra'', with its operator
norm).  There is a natural action, $\a$, of $G$ on $\cB$ by conjugation by
$U$, that is, $\a_x(T) = U_xTU_x^*$ for $x \in G$ and $T \in \cB$.  Because
$U$ is irreducible, the action $\a$ is ``ergodic'', in the sense that the
only $\a$-invariant elements of $\cB$ are the scalar multiples of the
identity operator. 


 Fix a continuous length function, $\ell$, on $G$ (so
$G$ must be metrizable).  Thus $\ell$ is non-negative, $\ell(x) = 0$ iff
$x = e_G$ (the identity element of $G$), $\ell(x^{-1}) = \ell(x)$, and
$\ell(xy) \le \ell(x) + \ell(y)$. We also require 
that $\ell(xyx^{-1}) = \ell(y)$ for all $x$, $y \in G$.  
Then in
terms of $\a$ and $\ell$ we can define a 
seminorm, $L^\cB$, on $\cB$ by the formula
\begin{equation}
\label{lipn}
L^\cB(T) = \sup\{ \|\a_x(T) - T \|/\ell(x): x \in G \quad \mathrm{and} 
\quad x \neq e_G\}  .
\end{equation}
Then $(\cB,L_\cB)$ 
is an example of a compact C*-metric-space, 
as defined in definition 4.1 of \cite{R21}.
In particular, $L_\cB$ satisfies the conditions
given there for being a Lip-norm, recalled 
in Definition \ref{deflip} above. 

Let $P$ be a rank-1 projection in $\cB$ (soon to be
the projection on a highest weight subspace). Let $H$
be the stability subgroup of $P$ for $\a$. Form the
quotient space $G/H$ (which later will be the sphere).
We let $\l$ denote the action
of $G$ on $G/H$, and so on $\cA = C(G/H)$, by left-translation. 
Then from $\l$ and $\ell$ we likewise obtain a seminorm, 
$L^\cA$, on $\cA$ by the evident analog of
formula \ref{lipn}, except that we must now permit $L^\cA$
to take the value $\infty$. It is shown in proposition 2.2
of \cite{R4} that the set of functions for which
$L^\cA$ is finite (the Lipschitz functions) is a 
dense $*$-subalgebra of $\cA$.  Also, $L^\cA$ 
is the restriction to $\cA$ of the seminorm on
$C(G)$ that we get from $\ell$ and left translation,
when we view $C(G/H)$ as a subalgebra of
$C(G)$, as we will do when convenient.   
From $L^\cA$ we can use equation \ref{metr} 
to recover the usual quotient metric \cite{Wvr2}
on $G/H$ coming from the metric on $G$ determined by $\ell$.  
One can check easily that $L^\cA$ in turn comes from
this quotient metric.  Thus $(\cA,L^\cA)$ 
is the compact C*-metric-space
associated to this ordinary compact metric space. 
Then for any bridge from $\cA$ to $\cB$ 
we can use $L^\cA$ and $L^\cB$ to measure the
length of the bridge in the way given by {\Lat} \cite{Ltr2}, 
which we described in Definitions \ref{reach} and \ref{height}. 

We now describe the natural bridge,  
$\Pi=(\cD, \o)$, from $\cA$ to $\cB$ that was
first presented in section 2 of \cite{R29}. 
We take $\cD$ to be the C*-algebra 
\[
\cD = \cA \otimes \cB = C(G/H, \cB)  .
\]
We identify $\cA$ with the 
subalgebra $\cA \otimes 1_\cB$ of $\cD$,
where $1_\cB$ is the identity element of $\cB$. 
Similarly, we identify $\cB$ with the 
subalgebra $1_\cA \otimes \cB$ of $\cD$.
In view of many of the calculations done in \cite{R7, R21} it is
not a surprise that we define the pivot $\o$ to be the function in  
$C(G/H, \cB)$ defined by
\[
\o(x) = \a_x(P)
\]
for all $x \in G/H$, where $P$ is the rank-1 projection
chosen above. (It is a ``coherent state''.) 
We notice that $\o$ is actually 
a non-zero projection
in $\cD$, and so it satisfies the requirements for being a pivot.

But projective modules over algebras are in general given by
projections in matrix algebras over the given algebra, not just
by projections in the algebra itself. This brings us back 
to the topic of matricial
bridges which was introduced early in Section \ref{secvbclose}. 
We now apply the general matricial framework
discussed there to the more specific  
situation described just above in which $\cA = C(G/H)$, etc.,
with
corresponding natural bridge $\Pi$, and then with its
associated 
matricial bridges $\Pi_d$ defined
as in Definition \ref{defmat}. We must
specify our matrix slip-norms. This is essentially done
in example 3.2 of \cite{Wuw2} and section 14 of \cite{R21}.
Specifically:

\begin{notation}
\label{matbrid}
As above, we have the 
actions $\l$ and $\a$ on $\cA = C(G/H)$ and $\cB = \cB(\cH)$
respectively. For any natural number $d$ 
let $\l^d$ and $\a^d$ be the
corresponding actions $\iota_d\otimes \l$ and $\iota_d \otimes \a$
on $M_d\otimes \cA = M_d(\cA)$ and $M_d\otimes \cB = M_d(\cB)$,
for $\iota_d$ denoting the identity operator from $M_d$ to itself.
We then use the length function $\ell$ and formula  \ref{lipn} to
define seminorms $L^\cA_d$ and $L^\cB_d$ on $M_d(\cA)$
and $M_d(\cB)$. 
\end{notation}

\noindent
It is easily verified that $\{L^\cA_d\}$ and
$\{L^\cB_d\}$ are matrix slip-norms. Notice that here $L_1^\cA = L^\cA$
and $L_1^\cB = L^\cB$ are actually Lip-norms, and so, by property
1 of Definition \ref{defmtx}, for each $d$ the null-spaces of $L^\cA_d$ and
$L^\cB_d$ are exactly $M_d$.

We remark that, as discussed in \cite{R29}, the bridge $\Pi = (\cD, \o)$
with $\cD = C(G/H, \cB)$ considered above is an example of a
``bridge with conditional expectations'', and that for such bridges
theorem 5.5 of \cite{R29} gives upper bounds for the reach and
height of $\Pi_d$ in terms of the choices of $\ell$, $P$, etc.
In particular, they are finite.


\section{Projections for $\cA$}
\label{secprojA}

We now restrict our attention to the case in which $G = SU(2)$. We
choose our notation in such a way that much of 
it generalizes conveniently to the
setting of general compact semi-simple Lie groups, though we do not
discuss that general case here. We let $H$ denote the diagonal subgroup
of $G$, which is a maximal torus in $G$.  The homogeneous 
space $G/H$ is diffeomorphic to the
2-sphere. As before, we set $\cA = C(G/H)$. 

It is known that for any 2-dimensional
compact space every complex vector bundle is a direct sum
of complex line bundles. See theorem 1.2 of 
chapter 8 of \cite{Hsm}. This applies to the 2-sphere, and
so in this and the next few sections we will concentrate
on the case of line bundles. In Section \ref{secsums} we
will discuss the situation for direct sums of projective
modules. It is not entirely straight-forward.

In this section we seek formulas for projections that represent 
the line bundles
over $G/H$. We will follow the approach given in
\cite{Lnd2}, where formulas for the projections were first given
in the global form that we need. (See also \cite{Lnd1, LnS, R22}.) 
But the formulas
given in \cite{Lnd2} do not seem convenient for obtaining the
detailed estimates that we need later, 
so the specific path that we follow is
somewhat different. 

We will often view (i.e. parametrize) $H$ as $\bR/\bZ$. We define the
function $e$ on $\bR$, and so on $H$, 
by $e(t) = e^{2 \pi i t}$. Then each irreducible
representation of $H$ is of the form $t \mapsto e(kt)$ for some $k \in \bZ$.
For each $k\in \bZ$ let $\Xi_k$ denote the corresponding $\cA$-module
defined by:
\begin{notation}
\label{amod}
\[
\Xi_k = \{\xi \in C(G, \bC): \xi(xs) = \bar e(ks) \xi(x) \ \ \mathrm{for \ all}  \ \ 
x \in G, s \in H\},
\]
\end{notation}
\noindent
where elements of $\cA$ are viewed as functions on $G$ that act 
on $\Xi_k$ by pointwise multiplication. Then $\Xi_k$
is the module of continuous cross-sections of a fairly evident
vector bundle (a complex line bundle) over $G/H$. For $k \neq 0$
these are the physicists' ``monopole bundles''. 
Their ``topological charge'', or first Chern number, is
$k$ (or $-k$ depending on the conventions used). See
sections 3.2.1 and 3.2.2 of \cite{Lnd2}.
We let $\l$ denote the action of $G$ on
$\cA$, and also on $\Xi_k$, by left translation. These actions are
compatible, so that $\Xi_k$ is a $G$-equivariant $\cA$-module,
reflecting the fact that the
corresponding vector bundle is $G$-equivariant.

In order to apply the theory of Section \ref{secproleib} we need to
find a suitable projection from a free $\cA$-module onto $\Xi_k$. 
We do this in the way discussed in section 13 of \cite{R17}.
The feature that we use
to obtain the projections is the well-known fact that the one-dimensional
representations of $H$ occur as sub-representations of the restrictions
to $H$ of finite-dimensional unitary representations of $G$. Since $H$ 
is a maximal torus in $SU(2)$, the integers determining the one-dimensional
representations which occur when restricting a representation of $G$ are,
by definition, the weights of that representation.
We recall \cite{Smn} that for each non-negative integer $m$ there is an irreducible
representation, $(\cH^m, U^m)$ of $G$ whose weights are 
$m, m-2, \dots, -m + 2, -m$, each of multiplicity 1, 
and such a representation is unique up
to unitary equivalence. In particular, the dimension of $\cH^m$ is $m+1$.
The integer $m$ is called the ``highest weight''
of the representation. 
For a given integer $k$ (which may be negative) that
determines the $\cA$-module $\Xi_k$, 
we choose to consider the representation $(\cH^{|k|}, U^{|k|})$.

Then the one-dimensional subspace $\cK$ of $\cH^{|k|}$ for the 
highest weight if $k$ is
non-negative, or for the lowest weight if $k$ is negative,
is carried into itself by the restriction of $U^{|k|}$ to 
the subgroup $H$, and 
this restricted representation of $H$ is equivalent to the one-dimensional
representation of $H$ determining $\Xi_k$.  From now
on we simply let $V$ denote this restricted representation of $H$ on $\cK$.
Set
\[
\Xi^V_k = \{\xi \in C(G,\cK): \xi(xs) = V_s^*(\xi(x)) \mbox{ for } x \in
G,\ s \in H\}.
\]
Clearly $\Xi^V_k$ is a module over $\cA = C(G/H)$ 
that is isomorphic to $\Xi_k$. 

We want to show that $\Xi^V_k$ is a projective $A$-module, and 
to find a projection representing it.  Set 
\[
\U_k = C(G/H,\cH^{|k|})   .
\]
Then any choice of basis for $\cH^{|k|}$ exhibits
$\U_k$ as a free $A$-module.  For 
$\xi \in \Xi^V_k$ set $(\Phi\xi)(x) = U^{|k|}_x\xi(x)$ for $x \in G$, and
notice that $(\Phi\xi)(xs) = (\Phi\xi)(x)$ for $s \in H$ and 
$x \in G$, so that $\Phi\xi \in \U_k$.  It is clear that $\Phi$ is
an injective $A$-module homomorphism from $\Xi_V$ into $\U_k$.  
We show that the range of $\Phi$ is projective by exhibiting the
projection onto it from $\U_k$. This projection is the one that 
we will use in the later sections to represent the projective
module $\Xi_k$.  
\begin{notation}
\label{bigP}
We denote the projection from $\cH^{|k|}$ onto $\cK$ by $P^k$.
\end{notation}
Note that
$U^{|k|}_s P^k U^{|k|*}_s = P^k$ for $s \in H$ 
by the $H$-invariance of $\cK$.  Let $\cE_k$
denote the $C^*$-algebra $C(G/H,\cL(\cH^{|k|}))$. 
In the evident way 
$\cE_k = \mathrm{End}_A(\U_k)$.  Define $p_k$ on $G$ by 
\begin{equation}
\label{prj1}
p_k(x) = U^{|k|}_xP^kU_x^{|k|*},
\end{equation}
and notice that $p_k(xs) = p_k(x)$ for $s \in H$ and $x \in G$, so that 
$p_k \in \cE_k$.  Clearly $p_k$ is a projection 
in $\cE_k = \mathrm{End}_A(\U_k)$.  

\begin{prop}
\label{repprj}
As an operator on $\U_k$, the range of the projection $p_k$
is exactly the range of the injection $\Phi$.
\end{prop}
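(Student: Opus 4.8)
The plan is to identify the range of $p_k$ fiberwise and then to produce an explicit inverse to $\Phi$ on that range. The starting observation is that for each $x \in G$ the operator $p_k(x) = U^{|k|}_x P^k U^{|k|*}_x$ is the orthogonal projection of $\cH^{|k|}$ onto the subspace $U^{|k|}_x \cK$, since $P^k$ projects onto $\cK$ and conjugation by the unitary $U^{|k|}_x$ carries a projection to the projection onto the image subspace. Because $p_k$ acts pointwise on $\U_k$ and is idempotent, an element $\eta \in \U_k$ lies in the range of $p_k$ precisely when $p_k \eta = \eta$, equivalently when $\eta(x) \in U^{|k|}_x \cK$ for every $x$.

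First I would check the inclusion $\Phi(\Xi^V_k) \subseteq \mathrm{range}(p_k)$. Given $\xi \in \Xi^V_k$ we have $\xi(x) \in \cK$, so $P^k \xi(x) = \xi(x)$, and hence
\[
(p_k \Phi\xi)(x) = U^{|k|}_x P^k U^{|k|*}_x U^{|k|}_x \xi(x) = U^{|k|}_x P^k \xi(x) = U^{|k|}_x \xi(x) = (\Phi\xi)(x).
\]
Thus $p_k(\Phi\xi) = \Phi\xi$, so $\Phi\xi$ lies in the range of $p_k$.

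For the reverse inclusion, given $\eta$ in the range of $p_k$, I would define $\xi$ on $G$ by $\xi(x) = U^{|k|*}_x \eta(x)$. This is continuous, since $x \mapsto U^{|k|}_x$ is continuous, and the fiberwise description above forces $\xi(x) \in \cK$. It remains to verify the covariance $\xi(xs) = V_s^*\xi(x)$ for $s \in H$. Using that $\eta \in \U_k = C(G/H, \cH^{|k|})$ satisfies $\eta(xs) = \eta(x)$, together with $U^{|k|*}_{xs} = U^{|k|*}_s U^{|k|*}_x$, we obtain $\xi(xs) = U^{|k|*}_s \xi(x)$; and since $\xi(x) \in \cK$ while $V$ is by definition the restriction of $U^{|k|}|_H$ to the invariant subspace $\cK$, we have $U^{|k|*}_s \xi(x) = V_s^* \xi(x)$. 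Hence $\xi \in \Xi^V_k$, and $(\Phi\xi)(x) = U^{|k|}_x U^{|k|*}_x \eta(x) = \eta(x)$, so $\eta = \Phi\xi \in \Phi(\Xi^V_k)$.

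The two inclusions give the claimed equality of ranges. The only point requiring real care, and the step I would flag as the crux, is the covariance computation, which combines the $H$-invariance of $\eta$ as a function on $G/H$ with the identification of $V$ as $U^{|k|}|_H$ acting on $\cK$; everything else is pointwise linear algebra in $\cH^{|k|}$ together with the continuity of $x \mapsto U^{|k|}_x$.
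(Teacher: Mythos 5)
Your proof is correct and follows essentially the same route as the paper's: both directions are verified by the same pointwise computations, with the inverse to $\Phi$ on the range of $p_k$ given by $\xi(x) = U^{|k|*}_x\eta(x)$, whose values land in $\cK$ because $p_k\eta = \eta$, followed by the same covariance check $\xi(xs) = U^{|k|*}_s\xi(x) = V^*_s\xi(x)$. Your preliminary fiberwise identification of $\mathrm{range}(p_k(x))$ as $U^{|k|}_x\cK$ is a mild repackaging of the paper's computation $\eta_F(x) = P^kU^{|k|*}_xF(x)$, not a different argument.
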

\begin{proof}
If $\xi \in \Xi_V$, then 
$p_k(x)(\Phi\xi)(x) = U^{|k|}_x P^k U^{|k|*}_x U^{|k|}_x \xi(x) 
= (\Phi\xi)(x)$, so that
$\Phi\xi$ is in the range of $p_k$.  Suppose, conversely, that 
$F \in \U_k$ and that $F$ is in the range of $p_k$.  Set 
$\eta_F(x) = U_x^{|k|*} F(x) = U^{|k|*}_xp_k(x)F(x) = P^k U^{|k|*}_xF(x)$.  
Then the range
of $\eta_F$ is in $\cK$, and we see easily that 
$\eta_F(xs) = U_s^{|k|*}\eta_F(x)$.  Thus $\eta_F \in \Xi_V$.  Furthermore,
$(\Phi\eta_F)(x) = F(x)$.  Thus $F$ is in the range of $\Phi$.  This
shows that the range of $p_k$ as a projection on $\U_k$ is exactly
the range of $\Phi$.  
\end{proof}
Thus the range of $\Phi$, and so also $\Xi^V_k$, are projective
$A$-modules that are isomorphic, 
and $p_k$ is a projection that represents $\Xi^V_k$, 
and so represents $\Xi_k$.
It is this
projection $p_k$ that we will use in the later parts of this paper.

To express $p_k$ as an element of $M_d(A)$ for $d = |k| + 1$ 
we need only
choose an orthonormal basis, $\{e_j\}_{j=1}^d$, for $\cH^{|k|}$, 
and view the corresponding constant functions
as a basis (so standard module frame) for $\U_k$, and then 
express $p_k$
in terms of this basis.  Furthermore, if we define $g_j$ on $G$ by
$g_j(x) = P^k U_x^{|k|*}e_j$, then it is easily seen that each $g_j$ is
in $\Xi_V$, and that $\{g_j\}$ is a standard module frame,
as defined in definition 7.1 of \cite{R17},
for $\Xi_V$.  The basis also gives us an isomorphism of $\cE_k$ with
$M_d(A)$. But it is more natural and convenient to
view $p_k$ as an element of $\cE_k = \mathrm{End}_A(\U_k)$.
To summarize:

\begin{notation}
For $p_k$ defined as in equation \ref{prj1}, we 
use the identification of $M_d(\cA)$ with $\cE_k =C(G/H, \cL(\Hk))$
to view $p_k$ as 
an element of $M_d(\cA)$,    
and we use $p_k$ as the projection representing the projective $\cA$-module
$\Xi_k$.
\end{notation}


\section{Projections for $\cB^n$}
\label{secprobn}

Let $(\Hn, U^n)$ be the irreducible representation of $G = SU(2)$
of highest weight $n$. Let $\Bn = \cL(\Hn)$, and let $\a$ be the
action of $G$ on $\Bn$ by conjugation, that is, 
$\a_x(T) = U^n_x T U^{n*}_x$ for $T \in \Bn$ and $x \in G$.

Suitable projective modules for our context seem to have
been first suggested
in \cite{GKP}. (See the paragraph after equation 40 there.) 
See also equation 6.6 of \cite{HWK1}.
The formulation closest to that which we use here is found in
equation 84 of \cite{CSW}.
For each $k\in \bZ$ let $\Onk$ denote the right $\Bn$-module
defined by:
\begin{notation}
\label{bmod}
\[
\Onk = \cL(\Hn, \Hnk) ,
\]
where $(\Hnk, \Unk)$ is the irreducible representation
of highest weight $k+n$.
Thus if $k < 0$ we need $n$ large enough that $k+n \geq 0$.
\end{notation}
\noindent
Then $\Onk$ is a right $\Bn$-module by composing operators
in $\Onk$ on the right by operators in $\Bn$. 

We want to embed $\Onk$ into a free $\Bn$-module so that we
can consider the corresponding projection. Let 
\[
\U^n_k = \cL(\Hn, \cH^{|k|} \otimes \Hn)
\]
with its evident right action of $\Bn$ by composing operators. 
Then $\U^n_k$
is naturally isomorphic to $\cH^{|k|} \otimes \Bn$, so that
it is indeed a free $\Bn$-module, of rank the dimension
of $\cH^{|k|}$, which is $d = |k|+1$.

If $k>0$ and if $\eta^k$ and $\eta^n$ are highest weight vectors in 
$\cH^k$ and $\Hn$, then $\eta^k \otimes \eta^n$ is a highest
weight vector of weight $k+n$ in $\cH^k \otimes \Hn$,
for the action $U^k \otimes U^n$,  and thus $\cH^k \otimes \Hn$
contains a (unique) copy of $\Hnk$. If $k < 0$ but $k+n\geq 0$
then $\cH^{|k|} \otimes \Hn$ again contains a highest weight
vector of weight $k+n$, but the argument is somewhat more
complicated, and we give it in Lemma \ref{highwt}. Thus
again $\cH^{|k|} \otimes \Hn$ contains a (unique)
copy of $\Hnk$.
Consequently, for any $k$ we can,
and do, identify $\Hnk$ with the corresponding subspace of
$\cH^{|k|} \otimes \Hn$. (We always assume that $k+n \geq 0$.)
Accordingly, we identify $\Onk$ with a $\Bn$-submodule
of $\U^n_k$. 


We have an evident left action of $\cL(\cH^{|k|} \otimes \Hn)$
on $\U^n_k$ by composing operators in $\U^n_k$ on the left
by operators in $\cL(\cH^{|k|} \otimes \Hn)$. In fact, $\U^n_k$ is a 
$\cL(\cH^{|k|} \otimes \Hn)$-$\cL(\Hn)$-bimodule, and 
because $\cL(\Hn)=\Bn$ there is an evident
natural isomorphism
\[
End_\Bn(\U^n_k) \cong \cL(\cH^{|k|} \otimes \Hn)   .
\]
(The bimodule $\U^n_k$ gives a
Morita equivalence between $\cL(\cH^{|k|} \otimes \Hn)$
and $\Bn$.)
But we also have natural isomorphisms
\[
\cL(\cH^{|k|} \otimes \Hn) \cong \cL(\cH^{|k|}) \otimes \cL(\Hn)
\cong M_d(\cB^n),
\]
and we will use these to take $\cL(\cH^{|k|} \otimes \Hn)$                   
as our version of $M_d(\cB^n)$.

Let $p^n_k$ denote the projection of  $\cH^{|k|} \otimes \Hn$
onto its subspace  $\Hnk$, and view $p^n_k$ as an element
of $\cL(\cH^{|k|} \otimes \Hn)$.
Then composition with $p^n_k$ on the left 
gives an evident projection
of $\cL(\Hn,\cH^{|k|} \otimes \Hn)$ onto its submodule
$\cL(\Hn, \Hnk)$, that is, from $\U^n_k$ onto $\Onk$, 
respecting the right action of $\Bn$. We thus see that
the projection $p^n_k$ is a projection
that represents the projective
$\Bn$-module $\Onk$. 

\begin{notation}
\label{repproj}
Let $p^n_k$ denote the projection of  $\cH^{|k|} \otimes \Hn$
onto its subspace  $\Hnk$. We use the identification of
$M_d(\cB^n)$ with $\cL(\Hk \otimes \Hn)$ to view $p^n_k$
as an element of $M_d(\cB^n)$, and we use $p^n_k$
as the projection to represent the projective
$\Bn$-module $\Onk$. 

\end{notation}

Suppose now that, as in Section \ref{secalg}, 
we have chosen a continuous
length-function $\ell$ on $G$ which we then use to define 
slip-norms for various actions. For the proof of our main
theorem (Theorem \ref{mainthm}), concerning the convergence
of modules, we need a bound
for $L^{M_d(\Bn)}(p^n_k)$ that is independent of $n$, 
where $d = |k| + 1$. That is, we need:
\begin{prop}
\label{pknbound}
For fixed $k$ there is a constant  $c_k$ (depending
in particular on the choice of the length function 
$\ell$) such that
\[
L^{M_d(\Bn)}(p^n_k) \leq c_k
\]
for all $n$.
\end{prop}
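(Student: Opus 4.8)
The plan is to identify exactly which action underlies the slip-norm $L^{M_d(\Bn)}$, and then to exploit that $p^n_k$ is invariant under a different, \emph{diagonal}, action. Under the identification $M_d(\Bn)\cong\cL(\Hk\otimes\Hn)$ used above (with $\cL(\Hk)$ playing the role of $M_d$), Notation~\ref{matbrid} tells us that the action $\a^d=\iota_d\otimes\a$ defining $L^{M_d(\Bn)}$ is conjugation through the $\Hn$-factor alone, namely $\a^d_x(T)=(I\otimes U^n_x)\,T\,(I\otimes U^{n*}_x)$. On the other hand, the subspace $\Hnk\subset\Hk\otimes\Hn$ onto which $p^n_k$ projects is a $G$-subrepresentation for $U^{|k|}\otimes U^n$, so $p^n_k$ is fixed by the \emph{diagonal} conjugation $\b_x:=\Ad(\Ukx\otimes U^n_x)$.

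First I would decouple these two actions. Writing $\g_x:=\Ad(\Ukx\otimes I)$ for conjugation through the small factor, the operators $\Ukx\otimes I$ and $I\otimes U^n_x$ act on different tensor factors and hence commute, so $\b_x=\g_x\circ\a^d_x$. Since $\b_x(p^n_k)=p^n_k$, this forces $\a^d_x(p^n_k)=\g_x^{-1}(p^n_k)=W^{-1}p^n_k W$ with $W:=\Ukx\otimes I$. Thus the motion of $p^n_k$ under the $n$-dependent action $\a^d$ is \emph{exactly} its motion under conjugation by the fixed small unitary $W$, and using $\|W^{-1}p W-p\|=\|[W,p]\|$ for the unitary $W$ I can estimate
\[
\|\a^d_x(p^n_k)-p^n_k\|=\big\|[\,\Ukx\otimes I,\ p^n_k\,]\big\|=\big\|[\,(\Ukx-I)\otimes I,\ p^n_k\,]\big\|\le 2\,\|\Ukx-I\|,
\]
where the last inequality uses only that $p^n_k$ is a projection of norm $1$. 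Dividing by $\ell(x)$ and taking the supremum over $x\neq e_G$ gives
\[
L^{M_d(\Bn)}(p^n_k)\ \le\ 2\,\sup_{x\neq e_G}\frac{\|\Ukx-I\|}{\ell(x)}\ =:\ c_k .
\]

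The decisive feature is that the right-hand side involves only the fixed representation $U^{|k|}$ on the $(|k|+1)$-dimensional space $\Hk$, with no reference to $n$: although $p^n_k$ itself grows with $n$, the conjugating unitary $\Ukx\otimes I$ that moves it is implemented on the small factor alone, so its effect is controlled uniformly. It remains only to check that $c_k<\infty$. Away from $e_G$ this is immediate, since $\|\Ukx-I\|\le 2$ while $\ell$ is bounded below there (by continuity and $\ell^{-1}(0)=\{e_G\}$ on the compact $G$); near $e_G$ it follows from the smoothness of the fixed finite-dimensional representation $U^{|k|}$ together with the fact that a length function is at least linearly large near the identity (subadditivity of $t\mapsto\ell(\exp(t\xi))$ forces $\liminf_{t\to0}\ell(\exp(t\xi))/t>0$ in each direction $\xi$), so that $\|\Ukx-I\|\le C_k\,\ell(x)$ for a constant depending only on $U^{|k|}$ and $\ell$. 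I expect the only real obstacle to be the bookkeeping of the three actions in the second step---correctly recognizing that the slip-norm uses the single-factor action while $p^n_k$ is fixed by the diagonal one---since it is precisely this mismatch, resolved in favor of the small factor, that eliminates the $n$-dependence.
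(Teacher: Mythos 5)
Your proposal is correct, and its first half coincides with the paper's: both exploit the invariance of $p^n_k$ under the diagonal action $\Ad(\Ukx\otimes U^n_x)$ to convert the $n$-dependent motion $\b^n_x(p^n_k)$ into conjugation by the fixed small unitary $\Ukx\otimes I$, exactly as in the displayed identity $\b^n_x(p^n_k)=\g^k_{x^{-1}}(p^n_k)$ in the paper. Where you genuinely diverge is in the second half. The paper proves the stronger Lemma \ref{lemindy}: via a smoothing argument (convolving the action $\g^k$ against a Lipschitz function $f$, then choosing $f$ from an approximate identity so that $\a^k_f$ is invertible, which uses only finite-dimensionality of $\cL(\Hk)$) it bounds $L^{\g^k}(T)\le c_k\|T\|$ for \emph{every} $T\in\cL(\Hk\otimes\Hn)$, and the proposition follows since $\|p^n_k\|=1$. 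You instead bound only the element at hand, via the commutator estimate $\|\a^d_x(p^n_k)-p^n_k\|=\|[\Ukx\otimes I,\,p^n_k]\|\le 2\|\Ukx-I\|$, reducing everything to finiteness of $\sup_{x\ne e_G}\|\Ukx-I\|/\ell(x)$. Your route is more elementary and yields the explicit, arguably sharper constant $c_k=2\sup_{x\neq e_G}\|\Ukx-I\|/\ell(x)$, i.e.\ the Lipschitz constant of the fixed representation $U^{|k|}$ for $\ell$; the paper's route is more robust, since the smoothing lemma needs no differentiability or Lie-theoretic input (only density of Lipschitz functions, prop.~2.2 of \cite{R4}) and bounds the whole seminorm $L^{\g^k}$, not just its value at $p^n_k$. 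One step you should make explicit: your subadditivity argument gives $\liminf_{t\to 0}\ell(\exp(t\xi))/t>0$ \emph{pointwise} in the direction $\xi$, but the inequality $\|\Ukx-I\|\le C_k\,\ell(x)$ requires uniformity over directions. This is easily supplied: fix $t_0>0$ small enough that $\exp(t_0\xi)\neq e_G$ for all unit $\xi$, set $\d=\inf_{\|\xi\|=1}\ell(\exp(t_0\xi))>0$ (continuity plus compactness of the unit sphere in the Lie algebra), and note that the subadditive telescoping $\ell(\exp(t_0\xi))\le\lfloor t_0/s\rfloor\,\ell(\exp(s\xi))+\sup_{u\le s,\|\eta\|=1}\ell(\exp(u\eta))$ then gives $\ell(\exp(s\xi))\ge(\d/2)(s/t_0)$ for all small $s$ uniformly in $\xi$. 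With that patch your argument is complete; note also that it leans on the Lie structure of $G$ (one-parameter subgroups), whereas the paper's lemma would survive for a general compact metrizable group with a continuous length function.
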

\begin{proof}
Notice that because the representation of $G$
on $\Hnk$ is a subrepresentation of the representation
$U^{|k|} \otimes U^n$
on $\cH^{|k|} \otimes \Hn$, the operator $p^n_k$ is
invariant under the corresponding conjugation action of $G$
on $\cL(\cH^{|k|} \otimes \Hn)$. 
But the action of $G$ used to define $L^{M_d(\Bn)}$, as
discussed in Section \ref{secalg},  comes from the action
$\a$ of $G$ on $\Bn$ using the representation $U^n$ on $\Hn$,
and is the action $\b^n = \iota_d \otimes \a$ coming from
conjugating elements of $\cL(\cH^{|k|}  \otimes \Hn)$ by
the representation $I_d \otimes U^n$ 
on $\cH^{|k|} \otimes \Hn$. Thus we must consider 
$\b^n_x(p^n_k)$, and the action $\b^n$ depends strongly
on $n$.

Now 
\[
\b^n_x(p^n_k) = (I_d \otimes U^n_x)p^n_k(I_d \otimes U^{n*}_x)  .
\]
But as said above, $p^n_k$ is invariant under conjugation by
$U^{|k|} \otimes U^n$, so we can replace $p^n_k$ in the
above equation by
\[
(U_x^{|k|*} \otimes U_x^{n*})p^n_k(U_x^{|k|} \otimes U_x^n),
\]
from which we find that
\[
\b^n_x(p^n_k) =  (U_x^{|k|*} \otimes I^n)p^n_k(U_x^{|k|} \otimes I^n), 
\]
where $I^n$ is the identity operator on $\Hn$.
We can express this as
\[
\b^n_x(p^n_k) = (\a^k_{x^{-1}} \otimes \iota^n)(p^n_k)  
\]
where $\a^k$ is the conjugation action of $G$ on $\cL(\cH^{|k|})$
and $\iota^n$ is the identity operator on $\cL(\Hn)$ 
(but this does not work for most other operators 
on $\cL(\cH^{|k|}  \otimes \Hn)$ besides $p^n_k$.)
Let $\g^k$ denote the action on $\cL(\cH^{|k|}  \otimes \Hn)$ 
defined by
$\g^k_x = (\a^k_x \otimes \iota^n)$. Then we see that we
have obtained
\[
\b^n_x(p^n_k) = \g^k_{x^{-1}}(p^n_k)    ,
\]
(and the action $\g^k$ depends only very weakly on $n$).
It follows that 
\[
(\b^n_x(p^n_k) - p^n_k)/\ell(x) = (\g^k_{x^{-1}}(p^n_k) - p^n_k)/\ell(x^{-1} ) ,
\]
where we have used that $\ell(x) = \ell(x^{-1})$. From this it follows
that
\[
L^{\b^n}(p^n_k) = L^{\g^k}(p^n_k)  .
\]
Consequently, because $\|p^n_k\| = 1$ for all $n$, 
the following lemma will conclude
the proof.

\begin{lem}
\label{lemindy}
For any $k$ there is a constant, $c_k$, such that for
any $n$ 
we have
\[
L^{\g^k}(T) \leq c_k \|T\|
\]
for every $T \in \cL(\cH^{|k|}  \otimes \Hn)$.  
\end{lem}

\begin{proof}
We use a standard ``smoothing''-type argument.
Let $f \in C(G)$, and let $\g^k_f$ be the
integrated form of $\g^k$ applied to $f$. Then for any
$x \in G$ and $T \in  \cL(\cH^{|k|}  \otimes \Hn)$ we have
\begin{align*}
\g^k_x(\g^k_f(T)) &= \g^k_x(\int f(y)\g^k_y(T)dy)
=\int f(y)\g^k_{xy}(T)dy    \\
&= \int f(x^{-1}y)\g^k_y(T)dy = \int (\l_x(f))(y)\g^k_y(T)dy,
\end{align*}
where $\l$ is the action of left-translation on $C(G)$.
Now suppose further that $L^\l(f) < \infty$, where $L^\l$
is the Lipschitz seminorm on $C(G)$  using formula \eqref{lipn},
for the length function $\ell$ and the action $\l$. 
Then
\begin{align*}
(\g^k_x(\g^k_f(T)) - \g^k_f(T))/ \ell(x)
= \int ((\l_x f)(y) - f(y))/\ell(x)) \g^k_y(T) dy. 
\end{align*}
On taking norms and then supremum over $x \in G$, we find that
\begin{equation}
\label{eqsmth}
L^{M_d(\Bn)}(\g^k_f(T)) \leq L^\l(f)\|T\|.
\end{equation}

It is shown in proposition 2.2 of \cite{R4} that the collection
of functions $f$ for which $L^\l(f) < \infty$ is a norm-dense
$*$-subalgebra of $C(G)$. Thus this sub-algebra
will contain an approximate identity for the convolution
algebra $L^1(G)$. As we let $f$ run through such
an approximate identity, $\a^k_f$ will converge for
the strong operator topology to the identity operator
on $\cL(\cH^{|k|})$. But $\cL(\cH^{|k|})$ is finite-dimensional,
and so the convergence is also for the operator norm. Thus we
can find $f$ such that $\a^k_f$ is close enough to the
identity operator that $\a^k_f$ is invertible, which
in turn implies that $\g^k_f = \a^k_f \otimes \iota^n$ 
is invertible, and that $\|(\g^k_f)^{-1}\| = \|(\a^k_f)^{-1}\|$. 
For such a fixed $f$ we have, on using inequality \ref{eqsmth},
\begin{align*}
L^{M_d(\Bn)}(T) &= 
L^{M_d(\Bn)}(\g^k_f((\g^k_f)^{-1}(T))) \\
&\leq L^\l(f)  \|(\g^k_f)^{-1}(T)\|  
\leq L^\l(f)\|(\g^k_f)^{-1}\|\|T\|.
\end{align*}

Thus, with notation as just above, we can set
\[
c^k = L^\l(f)\|(\g^k_f)^{-1}\|\
\]
\end{proof}   
This concludes the proof of Proposition \ref{pknbound}.
\end{proof}   

\section{Bridges and projections}
\label{secbp}

In this section we begin to
apply the general results about bridges and projections
given in Section \ref{secproleib}
to the specific algebras, projections and
bridges for $G = SU(2)$ described 
in Sections \ref{secalg}, \ref{secprojA}, and \ref{secprobn} .
We fix the positive integer $n$ and the integer $k$, to be
used as in the sections above, and we set $d = |k| + 1$. 
We let $\cD^n = \cA \otimes \cB^n$, and we let 
$\Pi^n_d = (M_d(\cD^n), \o_d^n)$ for $\o_d^n$ 
defined in Definition \ref{defmat}, so that
$\Pi^n_d$ is a bridge from $M_d(\cA)$ to $M_d(\cB^n)$.
We let $p_k \in M_d(\cA)$ and 
$p^n_k \in M_d(\Bn)$ be the projections defined in the previous
two sections, and we view them as elements of 
$M_d(\cD^n)$ via
the injections of $M_d(\cA)$ and 
$M_d(\Bn)$ into $M_d(\cD^n)$ given earlier. 
For this purpose we use the identification of 
$M_d(\cA)$ with $C(G/H, \cL(\Hk))$, of
$M_d(\cB^n)$ with $\cL(\Hk \otimes \Hn)$,
and the identification of 
$M_d(\cD^n)$
with $C(G/H, M_d \otimes \cB^n) = C(G/H, \cL(\Hk \otimes \Hn))$.
Thus:
\begin{notation}
\label{notproj} When viewed as elements 
of $C(G/H, \cL(\Hk \otimes \Hn))$, 
the projection $p_k$ is defined by
\[
 p_k(x) = \Ukx P^k\Ukx{^*} \otimes I^n
 \] for $x \in G/H$, while the projection
$p^n_k$ is defined as the constant function 
\[
p^n_k(x) = p^n_k
\] 
on $G/H$.
\end{notation}
Then, as discussed
in \cite{R29} and in Section \ref{secproleib}, we need to obtain a
useful bound for 
\[
\|p_k\o_d^n - \o_d^n p^n_k\|.
\]
Now
\begin{align*}
p_k(x)\o_d^n(x) &= (\Ukx P^k\Ukx{^*} \otimes I^n)
(I_d \otimes U^n_x P^n U^{n*}_x)   \\
&= (\Ukx \otimes U^n_x )(P^k \otimes P^n)(\Ukx{^*} \otimes U^n_x{^*} )  ,
\end{align*}
while  
\[
\o_d^n(x)p^n_k(x) = (I_d \otimes U^n_x P^n U^{n*}_x) p^n_k   .
\]
But the subspace $\Hnk$ of $\Hk \otimes \Hn$ is carried into itself
by the representation $ U^{|k|} \otimes U^n$, and so
\[
p^n_k =  (U_x^{|k|} \otimes U_x^n) p^n_k (U_x^{|k|*} \otimes U_x^{n*}),
\]
so that
\begin{align*}
(I_d \otimes &U^n_x P^n U^{n*}_x) p^n_k   \\  
&= (I_d \otimes U^n_x P^n U^{n*}_x) 
(U^{|k|}_x \otimes U^n_x)p^n_k (U^{|k|*}_x \otimes U^{n*}_x)   \\
&= (U^{|k|}_x \otimes U^n_x)(I_d \otimes P^n)p^n_k(U^{|k|*}_x \otimes U^{n*}_x)
\end{align*}
Thus
\begin{align*}
p_k(x)\o_d^n(x)  &- \o_d^n(x)p^n_k(x)   \\  
 &= (U^{|k|}_x \otimes U^n_x)
 (P^k \otimes P^n  -  (I_d \otimes P^n)p^n_k)
 (U^{|k|*}_x \otimes U^{n*}_x)   ,
\end{align*} 
and consequently
\begin{equation*}
\|p_k(x)\o_d^n(x)  - \o_d^n(x)p^n_k(x) \| = 
\|(P^k \otimes P^n)  -  (I_d \otimes P^n)p^n_k\|,
\end{equation*}
which is independent of $x$. Thus
\begin{equation}
\label{eqpro}
\|p_k\o_d^n  - \o_d^n p^n_k \| = 
\|(P^k \otimes P^n)  -  (I_d \otimes P^n)p^n_k\|  .
\end{equation}
 The next two sections are 
devoted to obtaining suitable upper bounds for
the term on the right.


\section{The core calculation for the case of $k \geq 0$}
\label{core}

We treat first the case in which $k \geq 1$. The case in
which $k \leq -1$ is somewhat more complicated, and
we treat it in the next section. (The case for $k=0$
is trivial.)
Fix $k \geq 1$. Let $T^n_k$ be the negative of the 
operator whose norm
is taken on the right side of equation \ref{eqpro}. 
Notice that  $P^k \otimes P^n = (P^k \otimes P^n)p^n_k $ 
(which is false for $k \leq -1$),
and that $P^k \otimes P^n$ commutes with $p^n_k$. 
Consequently
\[
T^n_k = ((I_d   -  P^k) \otimes P^n)p^n_k.
\]
It is an operator on $\cH^k \otimes \Hn$. 
To understand the structure of this operator we use the
weight vectors of the two representations involved. For this
purpose we use the ladder operators in the complexified Lie algebra
of $SU(2)$. There are many conventions for them used
in the literature. Since the calculations in this section are
crucial for our main results, we give in Appendix 1
a careful statement of the conventions we use, and of the
consequences of our conventions. As explained in more detail there,
we let $H$ span the Lie subalgebra of our maximal torus, and
we let $E$ and $F$ be the ladder operators that satisfy the
relations 
\[
[E,F] = H, \quad \quad [H, E] = 2E, \quad \quad 
\mathrm{and} \quad [H, F] = -2F  .
\]
For each $n$ these elements of the complexified Lie algebra 
of $SU(2)$  act on $\cH^n$ via the infinitesimal
form of $U^n$, but as is commonly
done we will not explicitly 
include $U^n$ in our formulas. 
As operators on $\cH^n$
they satisfy the
relation $E^* = F$.
Let $f_n$  be a 
highest-weight vector for
the representation $(\cH^n, U^n)$, with $\|f_n\| = 1$.
The weights of this representation are $n, n-2, n-4, \dots, -n+2, -n$.
Set
\[
f_{n-2a} = F^a f_n
\]
for $a=0, 1, 2,  \dots, n$. These vectors form an orthogonal
basis for
$\cH^n$. As shown the Appendix, we have
\[
\| f_{n-2a}\|^2 = \|F^a f_n\|^2 = a!\Pi _{b=0}^{a-1} (n-b)  .
\]
for $a=0, 1, 2,  \dots, n$.

 Much as above,
let $e_k$  be a 
highest-weight vector for
the representation $(\cH^k, U^k)$ , with $\|e_k\| = 1$.
Set
\[
e_{k-2a} = F^a e_k
\]
for $a=0, 1, 2,  \dots, k$. These vectors form an orthogonal
basis for
$\cH^k$.

Set $v_{k+n} = e_k\otimes f_n$. It is a highest weight
vector of weight $k+n$ in $\cH^k \otimes \cH^n$ 
for the representation
$U^k \otimes U^n$. Notice that $\|v_{k+n} \| = 1$. Then set
\[
v_{k+n-2a} = F^a v_{k+n}
\] 
for $a=0, 1, 2,  \dots, k+n$. These vectors form an
orthogonal basis for a sub-representation of
$(\cH^k \otimes \cH^n, U^k \otimes U^n)$ that is
unitarily equivalent to the irreducible representation
$(\cH^{k+n}, U^{k+n})$, and we will identify it with 
the latter. 
The span of these vectors is the range of
the projection $p^n_k$, and so from the form of 
$T^n_k$ we see that we only need to calculate the norm of
$T^n_k$ on the span of these vectors.
For a given $a$ we have
\[
F^a(e_k \otimes f_n) = (F^a e_k)\otimes f_n \
+ \ \mathrm{lower \ order \ terms}   ,
\]
where the lower-order terms are of the form
$F^{a-b}e_k\otimes F^b f_n$ for some integer $b\geq 1$.
But each of these lower-order terms is in the kernel of
$(I_d -P^k)\otimes P^n$ because $P^n(F^b f_n) = 0$
for $b \geq 1$. The highest weight vector
$e_k \otimes f_n$ is also in that kernel, because
$P^k$ is the projection onto the span of $e_k$. Thus we find 
that $T^n_k(e_k \otimes f_n) = 0$, while
\[
T^n_k(F^a(e_k \otimes f_n)) = (F^a e_k) \otimes f_n
\]
for $a = 1, \dots, k$. But the terms $(F^a e_k) \otimes f_n$
are orthogonal to each other for different $a$'s. Because
$\|(F^a e_k) \otimes f_n\| = \|F^a e_k\|$ for each $a$,
it follows that
\[
\|T^n_k\| = \max\{\|F^a e_k\|/ \|F^a(e_k \otimes f_n)\|:a = 1, \dots, k\}.
\]
But from equation \eqref{keyeq} of the Appendix we see that for each
$a$ we have
\[
\|F^a e_k\|^2 = a!\Pi_{b=0}^{a-1} (k-b),
\]
while
\[
\|F^a(e_k \otimes f_n)\|^2 = a!\Pi_{b=0}^{a-1} (k+n-b).
\]
Thus
\[
\|F^a e_k\|^2/ \|F^a(e_k \otimes f_n)\|^2
= \Pi_{b=0}^{a-1} (k-b)/(k+n-b)  .
\]
Since $(k-b)/(k+n-b) <1$ for each $b = 1, \dots, k$,
it is clear that the maximum of these products depending
on $a$ occurs when $a=1$ and so we find that 
\[
\|T^n_k\| = (k/(k+n))^{1/2}.
\]
We thus obtain:
\begin{prop}
\label{propcore1}
In terms of our notation in section \ref{secbp}, 
and by equation \ref{eqpro},
for any $k \geq 0$ we have
\[
\|p_k(x)   \o_d(x)  - \o_d(x)p^n_k(x) \| 
= \|(P^k \otimes P^n)  -  (I_d \otimes P^n)p^n_k\| = (k/(k+n))^{1/2}  
\]
for each $x$. 
\end{prop}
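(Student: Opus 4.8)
The plan is to obtain the proposition by assembling the computation of this section. By equation \ref{eqpro} the left-hand norm $\|p_k\o_d^n - \o_d^n p^n_k\|$ is already equal, independently of $x$, to $\|(P^k \otimes P^n) - (I_d \otimes P^n)p^n_k\|$, so it remains only to evaluate this last quantity, which is the norm of its negative $T^n_k$. For $k = 0$ one has $P^k = I_d$ and the expression vanishes, matching $(0/n)^{1/2}$, so I may assume $k \geq 1$. Here the highest-weight vector $e_k \otimes f_n = v_{k+n}$ of $\cH^k \otimes \cH^n$ lies in the copy of $\Hnk$ that is the range of $p^n_k$; since $P^k \otimes P^n$ is the orthogonal projection onto its span, it satisfies $P^k \otimes P^n = (P^k \otimes P^n)p^n_k$ and commutes with $p^n_k$. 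This yields the clean form $T^n_k = ((I_d - P^k) \otimes P^n)p^n_k$, which vanishes off the range of $p^n_k$, so I only need its norm on that range.

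To compute that norm I would use the orthogonal weight basis $v_{k+n-2a} = F^a v_{k+n}$, $a = 0, \dots, k+n$, of the range of $p^n_k$. Expanding $F^a(e_k \otimes f_n)$ by the Leibniz rule for $F$, the leading term is $(F^a e_k) \otimes f_n$ while every other term carries a factor $F^b f_n$ with $b \geq 1$ and is therefore annihilated by $I_d \otimes P^n$. Since $P^n f_n = f_n$, while $(I_d - P^k)(F^a e_k) = F^a e_k$ for $a \geq 1$ and $(I_d - P^k)e_k = 0$, I obtain $T^n_k(F^a v_{k+n}) = (F^a e_k) \otimes f_n$ for $a \geq 1$ and $0$ for $a = 0$. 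The key point is that the images $(F^a e_k) \otimes f_n$ have distinct weights $k - 2a$ in the first tensor factor, so they are mutually orthogonal, exactly like the inputs; an operator carrying an orthogonal basis to an orthogonal family has norm equal to the largest ratio of output length to input length. Hence $\|T^n_k\| = \max\{\|F^a e_k\|/\|F^a(e_k \otimes f_n)\| : a = 1, \dots, k\}$.

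Finally I would substitute the length formulas recorded in the Appendix, namely $\|F^a e_k\|^2 = a!\prod_{b=0}^{a-1}(k-b)$ and $\|F^a(e_k \otimes f_n)\|^2 = a!\prod_{b=0}^{a-1}(k+n-b)$, so that the squared ratio is $\prod_{b=0}^{a-1}(k-b)/(k+n-b)$. Each factor is strictly less than $1$, so the product decreases with $a$ and is maximal at $a = 1$, giving $\|T^n_k\| = (k/(k+n))^{1/2}$, which is the asserted value.

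The step I expect to be the main obstacle is the reduction of the operator norm to a maximum of scalar ratios: one must verify simultaneously that the off-diagonal tensor terms in the expansion of $F^a(e_k \otimes f_n)$ are all killed by $I_d \otimes P^n$ and that $T^n_k$ genuinely acts ``diagonally,'' sending the orthogonal weight basis of the range of $p^n_k$ to an orthogonal family. Once this diagonal structure is in hand the remaining estimate is the routine ratio computation from the Appendix. The accompanying simplification $T^n_k = ((I_d - P^k) \otimes P^n)p^n_k$ is where the hypothesis $k \geq 0$ enters, and is exactly what fails for $k \leq -1$, necessitating the separate treatment of the next section.
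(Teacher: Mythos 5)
Your proposal is correct and takes essentially the same route as the paper's own proof: the reduction $T^n_k = ((I_d - P^k)\otimes P^n)p^n_k$ via the fact that $P^k\otimes P^n$ projects onto the span of $v_{k+n}=e_k\otimes f_n$ inside the range of $p^n_k$, the evaluation on the weight basis $v_{k+n-2a}=F^a v_{k+n}$ with the lower-order Leibniz terms killed by $I_d\otimes P^n$, and the maximization at $a=1$ of the squared ratios $\prod_{b=0}^{a-1}(k-b)/(k+n-b)$ from the Appendix all match the paper. Your only additions---the explicit $k=0$ check, which the paper dismisses as trivial, and the spelled-out observation that an operator sending an orthogonal basis to an orthogonal family (and vanishing on the complement) has norm equal to the largest output-to-input length ratio---are harmless and correct.
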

Crucially, this goes to 0 as $n \to \infty$, for fixed $k$.


\section{The core calculation for the case of $k \leq -1$}
\label{core2}

We now treat the case in which $k \leq -1$, for which we
must assume that $k+n\geq 1$. Again we set
\[
T^n_k = (P^k \otimes P^n)  -  (I_d \otimes P^n)p^n_k.
\]
We use the same basis vectors $e_i$ and $f_j$ 
for $\Hk$  and $\Hn$ as
in the previous section.
But now $k + n < n$, and $p^n_k$ is the projection on the
subspace of $\Hk \otimes \Hn$ generated by the 
highest weight vector $v_{k+n}$ of weight $k+n$. This
vector has a more complicated expression in terms
of the basis vectors $e_i \otimes f_j$  than for the case
of $k \geq 0$. Specifically, $v_{k+n}$ will be a linear
combination of those basis vectors  $e_i \otimes f_j$
that are of weight $k+n$. 

\begin{lem}
\label{highwt}
For $k \leq -1$ a highest weight vector of weight $k+n$ 
in $\Hk \otimes \Hn$
is given by 
\[
v_{k+n} 
= \sum _{b  = 0}^{-k} \a_b \  e_{-k-2b} \otimes f_{n + 2k + 2b} \ ,
\]
where
\[
\a_b = (-1)^b \frac{(n+k+b)!}{(n+k)!b!}
\]
for $0 \leq b \leq -k$. 
\end{lem}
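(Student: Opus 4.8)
The plan is to verify directly that the displayed vector is a highest weight vector, that is, a nonzero $H$-eigenvector of eigenvalue $k+n$ that is annihilated by the raising operator $E$. Two of these three properties are immediate. Writing $e_{-k}$ and $f_n$ for the highest weight vectors of $\Hk$ and $\Hn$ (of weights $|k| = -k$ and $n$), each summand $e_{-k-2b} \otimes f_{n+2k+2b}$ has weight $(-k-2b) + (n+2k+2b) = k+n$, so $v_{k+n}$ is an $H$-eigenvector of eigenvalue $k+n$; and since $\a_0 = 1 \neq 0$ it is nonzero. The entire content of the lemma therefore lies in showing $E v_{k+n} = 0$.

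To compute $Ev_{k+n}$ I would first assemble two ingredients. On the tensor product $E$ acts by $E = E \otimes I + I \otimes E$, and for a highest weight vector $w$ of weight $m$ the standard $\mathfrak{sl}_2$ identity $EF^a w = a(m-a+1)F^{a-1}w$ (an easy induction from $EF = FE + H$) applies. Writing $e_{-k-2b} = F^b e_{-k}$ and $f_{n+2k+2b} = F^{-k-b}f_n$, these give
\begin{align*}
E\,e_{-k-2b} &= b(-k-b+1)\,e_{-k-2b+2}, \\
E\,f_{n+2k+2b} &= (-k-b)(n+k+b+1)\,f_{n+2k+2b+2}.
\end{align*}
Applying $E \otimes I + I \otimes E$ to each summand of $v_{k+n}$ thus yields two families of weight-$(k+n+2)$ basis vectors, namely $e_{-k-2b+2} \otimes f_{n+2k+2b}$ and $e_{-k-2b} \otimes f_{n+2k+2b+2}$.

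The crux is to reindex these so that the contributions to each weight-$(k+n+2)$ basis vector can be collected. The general such basis vector is $e_{-k-2c} \otimes f_{n+2k+2c+2}$ with $0 \le c \le -k-1$; the first family reaches it via the summand $b = c+1$ and the second via $b = c$, so its coefficient in $Ev_{k+n}$ is
\[
(-k-c)\bigl[\a_{c+1}(c+1) + \a_c(n+k+c+1)\bigr].
\]
Since $-k-c \ge 1$ on this range, the vanishing of every such coefficient is equivalent to the two-term recurrence $\a_{c+1}(c+1) = -\a_c(n+k+c+1)$, which the stated closed form satisfies, as one sees from $\a_{c+1}/\a_c = -(n+k+c+1)/(c+1)$. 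The vanishing factors $b$ at $b=0$ and $(-k-b)$ at $b=-k$ reflect $Ee_{-k} = 0$ and $Ef_n = 0$; they guarantee that $Ev_{k+n}$ acquires no component along the nonexistent vectors of $e$-weight $|k|+2$ or $f$-weight $n+2$, so that only the genuine basis vectors with $0 \le c \le -k-1$ occur. Hence $Ev_{k+n} = 0$ and the lemma follows.

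I expect the only real obstacle to be this reindexing bookkeeping: keeping straight which summand feeds which weight-$(k+n+2)$ basis vector, and confirming that the two $E$-images combine into the single recurrence above with no spurious endpoint conditions on the $\a_b$. Once the recurrence is isolated, checking that the given $\a_b$ solves it is a one-line ratio computation.
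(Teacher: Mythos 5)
Your proposal is correct and follows essentially the same route as the paper's proof: both expand $Ev_{k+n}$ using $E = E\otimes I + I\otimes E$ and the identity $EF^a w = a(m-a+1)F^{a-1}w$, collect the coefficient $(-k-b)\bigl[\a_{b+1}(b+1) + \a_b(n+k+b+1)\bigr]$ of each weight-$(k+n+2)$ basis vector, and reduce everything to the recurrence $\a_{b+1} = -(n+k+b+1)(b+1)^{-1}\a_b$. The only difference is direction --- the paper derives the closed form for $\a_b$ from the recurrence with $\a_0 = 1$, while you verify that the stated closed form satisfies it --- which is the same computation run in reverse.
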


\begin{proof}
We must determine the coefficients $\a_b$ for $v_{k+n}$
of the general form given in the statement of the lemma. 
In order for $v_{k+n}$ to be a
highest weight vector it must satisfy $Ev_{k+n} = 0$, that is, 
\[
0 = Ev_{k+n} 
= \sum _{b  = 0}^{-k} \a_b \  (Ee_{-k-2b} \otimes f_{n + 2k + 2b}
+e_{-k-2b} \otimes Ef_{n + 2k + 2b}).
\]
For each $b$ with $0 \leq b < -k$ the term in this sum that is
a multiple of 
\[
 e_{-k-2b} \otimes f_{n + 2k + 2(b+1)}
\]
is
\[
 \a_{b+1} \  Ee_{-k-2(b+1)} \otimes f_{n + 2k + 2(b+1)}
\ + \ a_b \ e_{-k-2b} \otimes Ef_{n + 2k + 2b} \ .
\]
By equation \ref{EFa} in the Appendix, 
\[
Ee_{-k-2(b+1)} =  (b+1)(-k-(b+1) +1)e_{-k-2(b+1)+2}   ,
\]
while
\[
Ef_{n + 2k + 2b} = Ef_{n - 2(-k  - b)}
= (-k-b)(n-(-k-b) + 1)f_{n + 2k + 2(b+1)}  \ .
\]
It follows that 
\[
0 = \a_{b+1}(b+1)(-k-b) \ + \ \a_b(-k-b)(n+k+b+1).
\]
Thus for $0 \leq b \leq -k - 1$ we have
\[
\a_{b+1} = -(n+k+b+1)(b+1)^{-1}\a_b \ ,
\]
that is, if $1 \leq b \leq -k$ then
\[
\a_b = -(n+k+b)b^{-1}\a_{b-1}   .
\]
We are free to set $\a_0 = 1$. On doing that, we find by
induction that
\[
\a_b = (-1)^b \frac{(n+k+b)!}{(n+k)!b!}
\]
for $0 \leq b \leq -k$. 
\end{proof}

Much as in the case in which $k \geq 0$, we set
\[
v_{k+n-2a} = F^a v_{k+n}
\]
for $a = 0, 1, \cdots, n+k$.  These vectors form an
orthogonal basis for a sub-representation of
$(\cH^{|k|} \otimes \cH^n, U^{|k|} \otimes U^n)$ that is
unitarily equivalent to the irreducible representation
$(\cH^{k+n}, U^{k+n})$, and we will identify the
latter with this sub-representation. 
The span of these vectors is, by definition, the range of
the projection $p^n_k$. 

We seek to determine $\|T^n_k\|$, and to show that, for
fixed $k$, it goes to 0 as $n$ goes to $\infty$. Recall
that $P^k$ is the rank-one projection on $e_k$, which
for $k \leq -1$ is the \emph{lowest} weight vector
in $\Hk$ (and is not of norm 1). Since the range of
$P^k \otimes P^n$ is spanned by $e_k \otimes f_n$
whereas the only vectors in the range of $p^n_k$ 
that are of weight $k + n$ are multiples of $v_{k+n}$, 
it is clear that the range of $P^k \otimes P^n$ is not
 included in the range of $p^n_k$, in contrast to what
 happens for $k \geq 0$. Let $W$ be the subspace of
 $\cH^{|k|} \otimes \cH^n$ spanned by the vectors
 $v_{k+n}, \cdots, v_{-k-n}$ together with $e_k \otimes f_n$.
 If $u$ is any vector in $\cH^{|k|} \otimes \cH^n$ that is
 orthogonal to $W$, then both $P^k \otimes P^n$ and
 $p^n_k$ take $u$ to 0, and thus so does $T^n_k$. 
 Consequently in order to determine $\|T^n_k\|$ it
 suffices to view $T^n_k$ as an operator from $W$
 into $\cH^{|k|} \otimes \cH^n$.
 
 We consider now the action of $T^n_k$ on the vectors
 $v_{k+n-2a}$. Let us assume first that $a \geq 1$. 
 Since each term in the
 formula for $v_{k+n-2a}$ must involve an elementary tensor of
 weight $k+n-2a$, it is clear that 
 $(P^k \otimes P^n)(v_{k+n-2a}) = 0$ for $a \geq 1$,
 and so
 \[
 T^n_k(v_{k+n-2a}) = 
 (I_d \otimes P^n)(F^a(v_{k+n})).
 \]
Since $F$ lowers weights, the only term in the formula for  $v_{k+n}$
given in Lemma \ref{highwt} on which $(I_d \otimes P^n)F^a$ has
 a possibility of being non-zero is the term for $b = -k$, that is 
 $\a_{-k} \ e_k \otimes f_n$. But because $e_k$ is the lowest weight
 vector in $\Hk$, we see that 
 $F(e_k\otimes f_n) = e_k \otimes f_{n-2}$, which is in
 the kernel of $I_d \otimes P^n$. We conclude that for all
 $a \geq 1$ we have $T^n_k(v_{k+n-2a}) = 0$.

Thus it suffices to determine the norm of the restriction of $T^n_k$
to the subspace spanned by $v_{k+n}$ and $e_k \otimes f_n$. Now
\begin{align*}
T^n_k(v_{k+n})  &= (P^k \otimes P^n)(v_{k+n}) 
 \ - \ (I_d \otimes P^n)p^n_k(v_{k+n})    \\
&= ((P^k - I_d)\otimes I_{n+1})(I_d \otimes P^n)(v_{k+n})   \\
&= ((P^k - I_d)\otimes I_{n+1})(\a_{-k} e_k \otimes f_n) \ = \ 0  .
\end{align*}

Thus, finally, it comes down to determining $T^n_k(e_k \otimes f_n)$.
Now clearly
\[
T^n_k(e_k \otimes f_n) = e_k \otimes f_n \ - \ 
(I_d \otimes P^n)p^n_k(e_k \otimes f_n).
\]
The weight vectors $v_{k+n-2a}$ form an orthogonal basis for the range
of $p^n_k$, and all of these vectors except the one for $a = 0$ are
of different weight than the weight of $e_k \otimes f_n$ and so are
orthogonal to $e_k \otimes f_n$. It follows that
\[
p^n_k(e_k \otimes f_n) \ = \ 
\frac{\langle e_k \otimes f_n, \ v_{k+n}\rangle}
{\|v_{k+n}\|^2} v_{k+n}   .
\]
But from the formula for  $v_{k+n}$
given in Lemma \ref{highwt} we see that
\[
(I_d \otimes P^n)(v_{k+n}) \ = \ \a_{-k} \  e_k \otimes f_n \ = \ 
\frac{\langle e_k \otimes f_n, \ v_{k+n} \rangle} {\|e_k \otimes f_n\|^2}
e_k \otimes f_n   .
\]
Thus 
\[
T^n_k(e_k \otimes f_n) = 
\big(1 \ - \  \frac{\langle e_k \otimes f_n, \ v_{k+n} \rangle^2}
{\|e_k \otimes f_n\|^2\|v_{k+n}\|^2}\big)
e_k \otimes f_n   , 
\]
so that 
\[
    \|T^n_k\| = \frac{\|v_{k+n}\|^2 \ - \ \langle \frac{e_k \otimes f_n}
    {\|e_k \otimes f_n\|},   v_{k+n}\rangle^2}
    {\|v_{k+n}\|^2}
 = \|v'_{k+n}\|^2/\|v_{k+n}\|^2   ,
\]
where $v'_{k+n}$ denotes $v_{k+n}$ with its last term
(involving $e_k \otimes f_n$) removed.
We want to show that the above expression
goes to 0 as $n \to \infty$  .

Now 
\[
\|v_{k+n}\|^2 = \sum _{b  = 0}^{-k} \a_b^2 \ 
 \|e_{-k-2b}\|^2  \|f_{n - 2(-k -b)}\|^2,
\]
while $\|v'_{k+n}\|^2$ is the same sum but with
 the upper limit of summation
being $-k-1$. Since each $\|e_{-k-2b}\|$ is
independent of $n$, to show that 
$\|v'_{k+n}\|^2/\|v_{k+n}\|^2$ converges
to 0 as $n \to \infty$, it suffices to show that for each $b$
with $0 \leq b \leq -k-1$ the term
\[
 \a_b^2 \  \|f_{n - 2(-k -b)}\|^2/\|v_{k+n}\|^2
\]
goes to 0 as $n \to \infty$. (Note that $\a_b$ does
depend on $n$.) To show this it suffices to show that
this holds when $v_{k+n}$ is replaced by the $b=-k$ term
in its expansion, which is the term missing in $v'_{k+n}$.
On noting that $e_k$ is independent of $n$ and that
$\|f_n\| = 1$ for all $n$, we see that we must
show that for each $b$ with $0 \leq b \leq -k-1$  
the term
\[
\a_b^2 \  \|f_{n - 2(-k -b)}\|^2/\a_{-k}^2
\]
goes to 0 as $n \to \infty$.

From the formula in Lemma \ref{highwt} we find that
for each $b$ with $0 \leq b \leq -k-1$ we have
\[
|\a_b|/|\a_{-k}| =  \frac{(n+k+b)!}{(n+k)!b!}/ 
\frac{n!}{(n+k)!(-k)!}
= \frac{(-k)!}{b!} \frac {(n+k+b)!}{n!}  \  ,
\]
while from formula \ref{keyeq} of the Appendix we have
\[
\|f_{n - 2(-k -b)}\|^2 = \frac{(-k-b)!n!}{(n+k+b)!}  \ \   .
\]
Thus
\begin{align*}
\a_b^2 \  \|f_{n - 2(-k -b)}\|^2/\a_{-k}^2 & =
\big(\frac{(-k)!}{b!}\big)^2 (-k-b)!\big(\frac{(n+k+b)!}{n!}\big)^2
\frac{n!}{(n+k+b)!}   \\
& \leq (-k)^3\frac{(n+k+b)!}{n!} \leq \frac{(-k)^3}{n}
\end{align*}
since $(k+b) \leq -1$. The power of $n$ can not be
improved, as seen by considering the case $b = -k-1$.

We conclude from these estimates that $\|T^n_k\| \to 0$
as $n \to \infty$, and consequently that:
\begin{prop}
\label{propcore2}
In terms of our earlier notation in Section \ref{secbp}, 
and by equation \ref{eqpro},
for any fixed $k \leq -1$ 
\[
\|p_k(x)   \o_d(x)  - \o_d(x)p^n_k(x) \| 
= \|(P^k \otimes P^n)  -  (I_d \otimes P^n)p^n_k\|
\]
converges to 0 as $n \to \infty$.
\end{prop}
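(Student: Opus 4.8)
The plan is to prove the convergence by analyzing the operator $T^n_k = (P^k \otimes P^n) - (I_d \otimes P^n)p^n_k$ directly. By equation \ref{eqpro}, together with the $x$-independence established in Section \ref{secbp}, the quantity appearing in the proposition equals $\|T^n_k\|$, so the entire task reduces to showing that $\|T^n_k\| \to 0$ as $n \to \infty$ for fixed $k \leq -1$. I would attack this using the weight-vector basis of $\Hk \otimes \Hn$, exploiting the ladder-operator relations recorded in the Appendix.

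First I would cut down the space on which $T^n_k$ acts nontrivially. The range of $P^k \otimes P^n$ is spanned by $e_k \otimes f_n$, while the range of $p^n_k$ is the copy of $\Hnk$ spanned by the lowered vectors $v_{k+n-2a} = F^a v_{k+n}$; any vector orthogonal to the span $W$ of these together with $e_k \otimes f_n$ is annihilated by both pieces of $T^n_k$, so it suffices to treat $T^n_k$ as an operator out of $W$. Next I would use the explicit highest-weight vector $v_{k+n}$ from Lemma \ref{highwt} to evaluate $T^n_k$ on each basis vector of $W$. A weight-counting argument shows that $P^k \otimes P^n$ kills every $v_{k+n-2a}$ with $a \geq 1$; and $(I_d \otimes P^n)F^a v_{k+n}$ vanishes for $a \geq 1$ because the only term of $v_{k+n}$ not already lowered out of the top weight of $\Hn$ is $\a_{-k}\, e_k \otimes f_n$, and $F(e_k \otimes f_n) = e_k \otimes f_{n-2}$ lies in the kernel of $I_d \otimes P^n$. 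A parallel computation gives $T^n_k(v_{k+n}) = 0$. Hence $T^n_k$ is supported entirely on $e_k \otimes f_n$, and applying the rank-one projection formula for $p^n_k(e_k \otimes f_n)$ yields the clean identity $\|T^n_k\| = \|v'_{k+n}\|^2 / \|v_{k+n}\|^2$, where $v'_{k+n}$ denotes $v_{k+n}$ with its $b = -k$ term removed.

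With this identity in hand, the remaining step is the asymptotic estimate. Writing $\|v_{k+n}\|^2 = \sum_{b=0}^{-k} \a_b^2 \,\|e_{-k-2b}\|^2 \,\|f_{n-2(-k-b)}\|^2$ and noting that $\|v'_{k+n}\|^2$ is the same sum truncated at $b = -k-1$, it suffices to show that each ratio $\a_b^2 \,\|f_{n-2(-k-b)}\|^2 / \a_{-k}^2$ with $0 \leq b \leq -k-1$ tends to $0$. Here I would substitute the explicit coefficients $\a_b = (-1)^b (n+k+b)!/((n+k)!\, b!)$ from Lemma \ref{highwt} and the norm formula $\|f_{n-2(-k-b)}\|^2 = (-k-b)!\, n!/(n+k+b)!$ from equation \ref{keyeq} of the Appendix. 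The factorial ratio $(n+k+b)!/n!$ coming from $|\a_b|/|\a_{-k}|$ cancels against its reciprocal $n!/(n+k+b)!$ from the norm, leaving a single surviving factor of $(n+k+b)!/n!$, which for $k+b \leq -1$ is at most $1/n$; the bounded combinatorial prefactor is controlled by $(-k)^3$.

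The main obstacle is precisely this final estimate: one must track the competing growth of the coefficients $\a_b$ (which themselves depend on $n$) against the decay of the weight-vector norms $\|f_{n-2(-k-b)}\|^2$, and verify that after cancellation the $b = -k$ term dominates all the others at rate $1/n$. The bookkeeping with shifted factorials is where an error would most easily arise, and I would check—by examining the extreme case $b = -k-1$—that the power of $n$ cannot be improved, confirming that the bound is sharp and that $\|T^n_k\| = O(1/n) \to 0$, which is the desired conclusion.
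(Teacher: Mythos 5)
Your proposal is correct and follows essentially the same route as the paper's own proof: the same reduction of $T^n_k$ to the subspace $W$, the same weight-counting and ladder-operator computations showing $T^n_k(v_{k+n-2a})=0$ for all $a \geq 0$, the same identity $\|T^n_k\| = \|v'_{k+n}\|^2/\|v_{k+n}\|^2$, and the same factorial cancellation yielding the bound $(-k)^3/n$ with sharpness at $b=-k-1$. (The one caveat, which you share with the paper, is that since $e_k \otimes f_n$ is not orthogonal to $v_{k+n}$, the operator-norm computation arguably gives $\sin\theta$ rather than $\sin^2\theta = \|v'_{k+n}\|^2/\|v_{k+n}\|^2$, where $\theta$ is the angle between $e_k \otimes f_n$ and $v_{k+n}$; but as each tends to $0$ exactly when the other does, the proposition's conclusion is unaffected.)
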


I have not managed to extend the results of this 
section to the case of general coadjoint orbits
of compact semisimple Lie groups.


\section{The main theorem and its proof}
\label{secproof}

We are now in position to state and prove
the main theorem of this paper.
 We will recall some of the
notation at the beginning of the proof.

\begin{thm}
\label{mainthm}
Let notation be as above, for $G = SU(2)$ and a chosen 
continuous length
function $\ell$, etc. Fix the integer $k$ (and set $d=|k|+1$). 
Let $L^n_k$ be 
defined as in equation \eqref{eqlr} for $r = r^n = l_{\Pi^n_d}$. Then
$L^n_k(p_k, \ p^n_k)$ goes to 0 as $n \to \infty$. Furthermore,
we can find a natural number $N_k$ large enough that
for every $n \geq N_k$ we have
\[
(h_{\Pi^n_d} +l_{\Pi^n_d})L^n_k(p_k, \ p^n_k) < 1/2 \ ,
\]
so that if $q$ is any other projection in $M_d(\cB^n)$
that satisfies this same inequality when $p^n_k$ is 
replaced by $q$, then there is a continuous path
of projections in $M_d(\cB^n)$ going from $p^n_k$
to $q$, which implies that the projective $\cB^n$ modules 
determined by $p^n_k$
and $q$ are isomorphic. In this sense the projective
$\cB^n$-module $\O^n_k$ is associated by the
bridge $\Pi^n_d$ to the projective $\cA$-module
$\Xi_k$.
\end{thm}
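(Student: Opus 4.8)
The plan is to control the single number $(h+l)\,L^n_k(p_k,p^n_k)$, where I abbreviate $h=h_{\Pi^n_d}$ and $l=l_{\Pi^n_d}=r^n$; this is exactly the quantity that drives part~(a) of Theorem~\ref{th4.5}, and I will show it tends to $0$ as $n\to\infty$. First I would unwind the definition \eqref{eqlr} of $L^n_k=L^{r^n}$. Because $p_k$ and $p^n_k$ are self-adjoint, $\hat N_{\Pi^n_d}$ coincides with $N_{\Pi^n_d}$ on $(p_k,p^n_k)$, so
\[
L^n_k(p_k,p^n_k)=\max\big\{\,L^{M_d(\cA)}(p_k),\ L^{M_d(\Bn)}(p^n_k),\ l^{-1}\|p_k\o_d^n-\o_d^n p^n_k\|\,\big\}.
\]
The structural point to notice at the outset is that $l=\max(r_{\Pi^n_d},h)$, whence $h\le l$ and $(h+l)/l\le 2$; moreover $r^n=l\ge r_{\Pi^n_d}$, so $L^n_k$ is admissible (Proposition~\ref{admis}) and Theorem~\ref{th4.5} indeed applies.

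Next I would dispose of the three entries, all of which are already prepared earlier. The first, $L^{M_d(\cA)}(p_k)$, is a finite constant $c'_k$ independent of $n$ (finite because $p_k$ is assembled from the smooth representation $U^{|k|}$, see Notation~\ref{notproj}, hence is Lipschitz). The second is bounded uniformly in $n$ by Proposition~\ref{pknbound}: $L^{M_d(\Bn)}(p^n_k)\le c_k$. For the third, equation~\eqref{eqpro} identifies the commutator norm with $\|(P^k\otimes P^n)-(I_d\otimes P^n)p^n_k\|$, and Propositions~\ref{propcore1} and~\ref{propcore2} give precisely that this tends to $0$ as $n\to\infty$ (equal to $(k/(k+n))^{1/2}$ for $k\ge 0$, and $O(n^{-1/2})$ for $k\le -1$). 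Finally I would invoke the quantitative estimates of theorem~5.5 of \cite{R29}, applied to the matricial bridges $\Pi^n_d$, which yield $l\to 0$ as $n\to\infty$ for each fixed $d$.

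Assembling these, I multiply through by $(h+l)$ and use $(h+l)/l\le 2$ on the third entry to get
\[
(h+l)\,L^n_k(p_k,p^n_k)\ \le\ \max\big\{\,2l\,\max(c'_k,c_k),\ 2\,\|p_k\o_d^n-\o_d^n p^n_k\|\,\big\},
\]
and the right-hand side tends to $0$: the first term because $l\to 0$, the second by the core estimate. (It is essential here that $l\to 0$, since that factor is what annihilates the otherwise constant contributions $c'_k$ and $c_k$.) Hence there is $N_k$ with $(h+l)\,L^n_k(p_k,p^n_k)<1/2$ for all $n\ge N_k$, which is the asserted inequality.

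To conclude, I would apply part~(a) of Theorem~\ref{th4.5} to the bridge $\Pi^n_d$ with $p=p_k$ and $q=p^n_k$: any other projection $q_1\in M_d(\Bn)$ obeying $(h+l)\,L^n_k(p_k,q_1)<1/2$ is then joined to $p^n_k$ by a continuous path of projections in $M_d(\Bn)$, so $q_1$ and $p^n_k$ determine isomorphic projective $\Bn$-modules; thus $\Onk$ is the module unambiguously associated to $\Xi_k$ by $\Pi^n_d$. The real content sits in the two inputs this assembly consumes --- the core norm computation of Sections~\ref{core}--\ref{core2} and the convergence $l\to 0$ --- and I expect the delicate structural point to be the balance $h\le l$: although both $l$ and the commutator norm vanish, the bound $(h+l)/l\le 2$ shows the weighted third term is simply controlled by $2\|p_k\o_d^n-\o_d^n p^n_k\|$, so no genuine $0/0$ competition ever arises.
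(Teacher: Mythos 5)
Your proposal is correct and follows essentially the same route as the paper's own proof: the same three-term decomposition of $L^n_k(p_k,p^n_k)$, the same choice $r^n = l_{\Pi^n_d}$ precisely to secure $(h_{\Pi^n_d}+l_{\Pi^n_d})/l_{\Pi^n_d} \le 2$ (the paper singles this out as the reason for not taking $r^n = r_{\Pi^n_d}$), Proposition \ref{pknbound} for the uniform bound on $L^{\cB^n}_d(p^n_k)$, Propositions \ref{propcore1} and \ref{propcore2} via equation \eqref{eqpro} for the pivot term, and finally Theorem \ref{th4.5}(a). The only cosmetic differences are that the paper cites theorem 6.10 (rather than 5.5) of \cite{R29} for the convergence $l_{\Pi^n_d}\to 0$, and that for $k\le -1$ the paper's computation actually yields the sharper rate $O(1/n)$ for $\|T^n_k\|$, though your weaker $O(n^{-1/2})$ claim still suffices.
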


\begin{proof}
We recall some of our earlier notation and results. We
have $\cA = C(G/H)$ and $\cB^n = \cL(\Hn)$. Then we
let $\cD^n = \cA\otimes \cB^n$, and we let
$\Pi^n = (\cD^n, \o^n)$, a bridge from $\cA$ to $\cB^n$.
Let $r_{\Pi^n}$ denote the reach of the bridge $\Pi^n$ as
measured by $L^\cA$ and $L^{B^n}$, as
defined in Definition \ref{reach}. 
Define a seminorm, $N_{\Pi^n}$, and then a $*$-seminorm
$\hat N_{\Pi^n}$
on $\cA \oplus \cB^n$ much as done
just before equation \ref{eqlr},
and then define, for some $r^n \geq r_{\Pi^n}$, 
a seminorm $L^n_{r^n}$ on $\cA \oplus \cB^n$ by 
\[
L^n_{r^n}(a, b) = 
L^\cA(a) \vee L^\cB(b) \vee (r^n)^{-1}\hat N_{\Pi^n}(a, b)  .
\]
Then $L^n_{r^n}$ is an admissible seminorm on $\cA \oplus \cB^n$, 
according to Proposition \ref{admis}.

We need the matricial version of this seminorm.
We set
$\Pi^n_d = (M_d(\cD^n), \o^n_d)$, where $\o^n_d$
is defined, much as in Definition \ref{defmat}, by
\[
\o^n_d(x) = I_d \otimes \a_x(P^n)
\]
for all $x \in G$.
Then
$\Pi^n_d$ is a bridge from $M_d(\cA)$ to $M_d(\cB^n)$. We
measure it with the seminorms $L^\cA_d$ and $L^{\cB^n}_d$,
defined much as at the end of 
Section \ref{secalg}. We now denote the resulting
reach and height of $\Pi^n_d$ by $r_{\Pi^n_d}$ and $h_{\Pi^n_d}$. 

Define,  on $M_d(\cA)\oplus M_d(\cB^n)$, a seminorm, $N^n_d$,
by $N^n_d(a,b) = \|a\o_d^n - \o_d^n b\|$, 
and then a $*$-seminorm
$\hat N^n_d$, much as done
just before equation \ref{eqlr}.
Then, for any $r^n \geq r_{\Pi^n_d}$
define a seminorm, $L^n_{d,r^n}$, by
\begin{equation}
\label{eqLn}
L^n_{d,r^n}(a,b) = L^\cA_d(a) \vee L^{\cB^n}_d(b)
\vee (r^n)^{-1} \hat N^n_d(a,b).
\end{equation}
Then $L^n_{d,r^n}$ is an admissible seminorm for 
$L^\cA_d$ and $L^{\cB^n}_d$, by Proposition \ref{admis}, because
$r^n \geq r^n_d$.

Let $p_k$ and $p^n_k$ be the projections defined 
in Notation \ref{notproj} for
the projective modules $\Xi_k$ and $\O^n_k$. Then
\[
L^n_{d,r^n}(p_k, p^n_k) = L^\cA_d(p_k) \vee L^{\cB^n}_d(p^n_k)
\vee (r^n)^{-1} \hat N^n_d(p_k, p^n_k).
\]
According to part a) of Theorem \ref{th4.5}, in order for
$p^n_k$ to be a projection associated to $p_k$ up
to path connectedness, we need that
\[
(h_{\Pi^n_d} + r^n) L^n_{d,r^n}(p_k,p^n_k) \ < \ 1/2.
\]
Thus each of the three main terms in the formula for
$L^n_{k,r^n}(p_k,p^n_k)$ must satisfy the corresponding inequality.

We examine the third term first. Because $p_k$ and $p^n_k$
are self-adjoint, this term is equal to 
\begin{align*}
(h_{\Pi^n_d} + r^n)(r^n)^{-1}\|p_k(x)   \o_d(x)  - \o_d(x)p^n_k(x) \| \ .
\end{align*}
We now use theorem 6.10 of \cite{R29} (where $q$ there 
is our $k$, and $m$ is our $n$), which is one of the 
two main theorems of \cite{R29}. It tells us 
the quite un-obvious fact that  $l_{\Pi^n_d}$,
and so both $r_{\Pi^n_d}$ and $h_{\Pi^n_d}$, 
go to $0$ as
$n \to \infty$, for fixed $k$. 
This theorem furthermore gives quantitative upper bounds for
$l_{\Pi^n_d}$ in terms of the length function $\ell$ chosen for $G$.

We also now see the reason for allowing
$r^n$ to possibly be different from $r_{\Pi^n_d}$ in 
defining $L^n_{d,r^n}$, 
namely that if $r_{\Pi^n_d}$ goes to
$0$ more rapidly than does $h_{\Pi^n_d}$, then their ratio
goes to $+\infty$, so that the term $(h_{\Pi^n_d} + r^n)(r^n)^{-1}$
in the displayed expression above goes to $+\infty$.
There are many ways to choose $r^n$
to avoid this problem, but the simplest is probably just to
set $r^n = \max\{r_{\Pi^n_d}, h_{\Pi^n_d}\}$, which is just the 
definition of $l_{\Pi^n_d}$.
We now make this choice, and for this choice we
write $L^n_d$ instead of $L^n_{d,{r^n}}$. Then we have
$1 + h_{\Pi^n_d}/l_{\Pi^n_d}  \ \leq \ 2$, and so
\[
(h_{\Pi^n_d} + l_{\Pi^n_d})
(l_{\Pi^n_d})^{-1}\|p_k   \o^n_d  - \o^n_d p^n_k\|     
\leq  2\|p_k   \o^n_d  - \o^n_d p^n_k\| .
\]
From Propositions \ref{propcore1} and \ref{propcore2}
it follows that for fixed $k$ this term goes to $0$ as $n$
goes to $\infty$.

We examine next the first term, 
$
(h_{\Pi^n_d} + l_{\Pi^n_d})L^\cA_d(p_k) .
$
Since $L^\cA_d(p_k)$ is independent of $n$, and we have seen
that $(h_{\Pi^n_d} + l_{\Pi^n_d})$ goes to $0$ as $n \to \infty$,
it follows that this first term too goes to $0$ as $n \to \infty$, 
for fixed $k$.

Finally, we examine the second term,
\[
(h_{\Pi^n_d} + l_{\Pi^n_d})L^{\cB^n}_d(p^n_k) .
\]
In examining above the first term we 
have seen that $h_{\Pi^n_d}$ and $l_{\Pi^n_d}$ go
to 0 as $n \to \infty$, so the
only issue is the growth of $L^{\cB^n}_d(p^n_k)$ as $n \to \infty$.
But Proposition \ref{pknbound} tells us exactly that, for fixed $k$, 
there is a common bound for the $L^{\cB^n}_d(p^n_k)$'s.

We now apply Theorem \ref{th4.5}  to the present 
situation, and this concludes the proof.
\end{proof}

As mentioned at the end of Section \ref{secalg}, 
upper bounds for $h_{\Pi^n_d}$ 
and $l_{\Pi^n_d}$ in terms of just the data for
$\Pi$, $L^\cA$, and $L^\cB$ are given in
theorem 5.5 of \cite{R29}.


\section{Bridges and direct sums of projective modules}
\label{secsums}

In this section we discuss how to deal with direct sums of 
projective modules, and we indicate in what sense it is sufficient 
for us to deal in detail here only with the line-bundles on the
2-sphere.

It is well known that every complex vector bundle over
the 2-sphere is isomorphic to the direct sum of a
line bundle with a trivial bundle. (Use e.g. Proposition 1.1 of
chapter 8 of \cite{Hsm}.) We can see this in part as follows.

\begin{prop}
\label{prpsum}
With notation as in Notation \ref{amod}, for any $j,k \in \bZ$ we
have a natural module isomorphism
\[
\Xi_j \oplus \Xi_k  \cong  \ \Xi_{j+k} \oplus \Xi_0   .
\]
\end{prop}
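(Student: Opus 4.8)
The plan is to reduce the statement to the classification of complex vector bundles over the $2$-sphere, which under Swan's theorem corresponds to classifying the finitely generated projective $\cA$-modules, while keeping the key step as explicit and equivariant as possible. I would organize the argument around three inputs: the elementary algebra of the $\Xi_m$, an explicit ``exchange'' isomorphism produced from the $SU(2)$ matrix coefficients, and the topological classification to handle the general case.

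First I would record the basic algebraic facts from Notation \ref{amod}. Since $\Xi_0 = \{\xi\in C(G,\bC): \xi(xs)=\xi(x)\} = C(G/H) = \cA$, the module $\Xi_0$ is the free rank-one module, i.e. the trivial line bundle. Moreover pointwise multiplication $(\xi,\eta)\mapsto \xi\eta$ carries $\Xi_a\times\Xi_b$ into $\Xi_{a+b}$ (the equivariance characters multiply), and the induced $\cA$-bilinear map gives a natural isomorphism $\Xi_a\otimes_\cA\Xi_b\cong\Xi_{a+b}$; in particular each $\Xi_m$ is an invertible module with inverse $\Xi_{-m}$. These checks are routine.

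Next I would isolate the concrete heart of the matter, the symmetric exchange $\Xi_m\oplus\Xi_m\cong\Xi_{m+1}\oplus\Xi_{m-1}$. Take functions $u,v\in C(G)$ given by the (conjugate) first-column matrix coefficients of $x\in SU(2)$; these lie in $\Xi_1$, so $\bar u,\bar v\in\Xi_{-1}$, and they satisfy $|u|^2+|v|^2=1$. Then
\[
W=\begin{pmatrix} \bar u & v \\ -\bar v & u \end{pmatrix}
\]
acts by pointwise multiplication as an $\cA$-module map $\Xi_{m+1}\oplus\Xi_{m-1}\to\Xi_m\oplus\Xi_m$, since each entry shifts the weight by the correct $\pm1$; the identity $|u|^2+|v|^2=1$ makes $W$ pointwise unitary, so its conjugate-transpose matrix $W^\ast$ (which has the matching weight shifts in the reverse direction) is a genuine inverse. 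This exhibits an isomorphism in completely explicit and $G$-equivariant form, and it is exactly the step that changes the ``spread'' of the index pair by $2$.

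Finally I would pass to topology, and this is where the main obstacle lies: these explicit exchanges do not chain up to arbitrary $(j,k)$. A single exchange only connects a pair $\{m,m\}$ to its partner $\{m+1,m-1\}$ and leads nowhere further, while tensoring by invertible modules $\Xi_c$ preserves the difference $j-k$; so neither device alone moves the spread from $|j-k|$ all the way to $|j+k|$, and writing down an invertible $2\times2$ transition matrix by hand for large $|j-k|$ is essentially a clutching-function construction. Rather than do that, I would invoke the classification cited just before the proposition \cite{Hsm}: every complex vector bundle over the $2$-sphere is a line bundle plus a trivial bundle, and line bundles are classified by $c_1\in H^2(S^2;\bZ)=\bZ$, with $c_1(\Xi_m)=\pm m$ for a fixed sign convention and $c_1$ additive over $\oplus$. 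Both $\Xi_j\oplus\Xi_k$ and $\Xi_{j+k}\oplus\Xi_0$ then have rank $2$ and first Chern number $\pm(j+k)$; writing each as (line bundle)$\,\oplus\,$(trivial) and matching the Chern number of the line-bundle summand yields the asserted isomorphism. Equivalently, one can observe that products of reduced classes vanish in $\tilde K^0(S^2)$ (the reduced diagonal $S^2\to S^2\wedge S^2=S^4$ is null-homotopic), so $(1-[\Xi_j])(1-[\Xi_k])=0$ gives $[\Xi_j]+[\Xi_k]=[\Xi_{j+k}]+[\Xi_0]$, a stable isomorphism; and since rank $2$ already lies in the stable range over a surface, this upgrades to an honest module isomorphism.
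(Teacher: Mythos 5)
Your proof is correct, but it takes a genuinely different route from the paper's, and the obstacle you cite as forcing the detour through topology is in fact illusory. The paper proves the proposition in one stroke with a single explicit global matrix: identifying $SU(2)$ with $S^3 \subseteq \bC^2$ so that elements of $\Xi_k$ become functions satisfying $\xi(ze_t, we_t) = \bar e_t^k\,\xi(z,w)$, it sets
\[
M(z,w) = \begin{pmatrix} \bar z^k & -\bar w^j \\ w^j & z^k \end{pmatrix},
\]
whose entries shift the weights by exactly the right amounts to carry $\Xi_j \oplus \Xi_k$ into $\Xi_{j+k} \oplus \Xi_0$, and whose determinant $|z|^{2k} + |w|^{2j}$ (with negative exponents read via the convention $\bar z^k = z^{-k}$, etc.) never vanishes on $S^3$ and is invariant under the circle action, so $M^{-1}$ is continuous with the matching reverse weight shifts and furnishes the inverse module map. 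The idea you missed is that one need not demand a pointwise \emph{unitary} matrix, as in your exchange $W$; mere pointwise \emph{invertibility} suffices for an $\cA$-module isomorphism, and once unitarity is dropped, higher powers of the coefficient functions become available and handle arbitrary $(j,k)$ directly --- no clutching-function construction, no chaining. Indeed your $W$ is essentially the paper's $M$ for $j = k = 1$ tensored with $\Xi_{m-1}$, so you had the right mechanism and stopped one generalization short. Your topological fallback is nonetheless sound: the reduction to rank and $c_1$ (or, equivalently, $(1-[\Xi_j])(1-[\Xi_k]) = 0$ in $\tilde K^0(S^2)$ together with stable-range cancellation, which rank $2$ over a $2$-complex comfortably satisfies) does prove the isomorphism, and it has the virtue of working verbatim for any pair of line bundles over any surface. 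What it costs is exactly what the paper's argument delivers: your route imports the classification of bundles over $S^2$ and the computation $c_1(\Xi_m) = \pm m$ (which the paper quotes from the literature rather than proves), and it yields only the \emph{existence} of an isomorphism, whereas the paper's two-line formula produces the natural, explicit, $G$-compatible isomorphism that the proposition asserts and that the surrounding quantitative framework of the paper is designed to exploit.
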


\begin{proof}
To simplify notation, we identify $SU(2)$ with the 3-sphere in
the usual way, so that $(z,w) \in S^3 \subseteq \bC^2$
corresponds to the matrix 
$(\begin{smallmatrix}
z & -\bar w     \\
w & \bar z
\end{smallmatrix})$.
If we set $e_t = e(t)$ for each $t \in \bR$, then right 
multiplication of elements of $SU(2)$ by the matrix
$(\begin{smallmatrix}
e_t & 0     \\
0 & \bar e_t
\end{smallmatrix})$ 
corresponds to sending $(z,w)$
to $(ze_t,we_t)$. Thus elements of $\Xi_k$ can be viewed
as continuous functions $\xi$ on $S^3$ that satisfy
\[
\xi(ze_t,we_t) = \bar e_t^k \xi(z,w)
\]
for all $t \in \bR$. 

Let $j, k \in \bZ$ be given. Define a $GL(2, \bC)$-valued 
function $M$ on $S^3$ by 
\[
M(z,w) = 
\begin{pmatrix}
\bar z^k & -\bar w^j   \\
 w^j &  z^k
\end{pmatrix}.
\]
For any
$(\begin{smallmatrix}
f     \\
g
\end{smallmatrix})
\in \Xi_j \oplus \Xi_k$ 
set $\Phi 
(\begin{smallmatrix}
f     \\
g
\end{smallmatrix})
= M 
(\begin{smallmatrix}
f     \\
g
\end{smallmatrix})$,
so that 
\[
\Phi 
\begin{pmatrix}
f     \\
g
\end{pmatrix}(z,w) = 
\begin{pmatrix}
\bar z^k f(z)  - \bar w^j g(w)     \\
 w^j f(z) +  z^k g(w)    .
\end{pmatrix}
\]
It is easily checked that 
$\Phi 
(\begin{smallmatrix}
f     \\
g
\end{smallmatrix})$
is in $\Xi_{j+k} \oplus \Xi_0$, and that
$\Phi$ is an $\cA$-module homomorphism.
Furthermore, $\Phi$ has an inverse, obtained by using the
inverse of $M$. Thus $\Phi$ is an isomorphism, as needed.
\end{proof}

This proposition can be used inductively to show that the
direct sum of any finite number of the $\Xi_k$'s is isomorphic
to the direct sum of a single $\Xi_j$ with a trivial (i.e. free)
module.

The corresponding result for the $\Onk$'s is even
easier:

\begin{prop}
\label{prpsms}
With notation as in Notation \ref{bmod}, for any $j,k \in \bZ$
and for any $n \geq 1$ with $n+j \geq 0$, 
$n+k \geq 0$, and $n+j+k \geq 0$, we
have a natural module isomorphism
\[
\O^n_j \oplus \O^n_k  \cong  \ \O^n_{j+k} \oplus \O^n_0   .
\]
\end{prop}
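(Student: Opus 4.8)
The plan is to recognize both sides as Hom-modules with a common target dimension, and then to produce the isomorphism simply by post-composition. First I would note that, directly from Notation \ref{bmod}, the left-hand side is
\[
\O^n_j \oplus \O^n_k = \cL(\Hn, \cH^{n+j}) \oplus \cL(\Hn, \cH^{n+k}) \cong \cL(\Hn, \cH^{n+j} \oplus \cH^{n+k}),
\]
where the final isomorphism is the canonical one assembling a pair of operators into a single operator with block-decomposed range; it is manifestly a right $\Bn$-module isomorphism, since the $\Bn$-action is by composition on the right and hence touches only the common source $\Hn$. In the same way the right-hand side is $\cL(\Hn, \cH^{n+j+k} \oplus \cH^{n})$.

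The key structural observation I would then exploit is that the right $\Bn$-action on $\cL(\Hn, V)$ is by precomposition, so it sees only the source $\Hn$ and is entirely insensitive to the target $V$. Consequently, for any linear map $\Psi : V \to W$, post-composition $T \mapsto \Psi \circ T$ is automatically a right $\Bn$-module homomorphism from $\cL(\Hn, V)$ to $\cL(\Hn, W)$, because $(\Psi \circ T) \circ S = \Psi \circ (T \circ S)$ for $S \in \Bn$; and it is an isomorphism as soon as $\Psi$ is.

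All that then remains is a dimension count for the two target spaces. Since $\dim \cH^m = m+1$, the hypotheses $n+j \geq 0$, $n+k \geq 0$, $n+j+k \geq 0$ (with $n \geq 1$) guarantee that all four irreducible representations are defined, and
\[
\dim(\cH^{n+j} \oplus \cH^{n+k}) = (n+j+1) + (n+k+1) = (n+j+k+1) + (n+1) = \dim(\cH^{n+j+k} \oplus \cH^{n}).
\]
Choosing any linear isomorphism $\Psi$ between these two equal-dimensional spaces and applying the post-composition construction yields the asserted module isomorphism.

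There is no genuine obstacle here, which is exactly why this proposition is ``even easier'' than Proposition \ref{prpsum}. The only point that deserves care is the direction of the $\Bn$-action: because these are \emph{right} modules acting by composition on the right, it is post-composition (rather than precomposition) that is $\Bn$-linear, and this is precisely what lets an arbitrary target isomorphism do the job. Unlike the $\Xi_k$ case, no explicit $G$-equivariant or continuous intertwiner is needed, and indeed $\Psi$ need not be canonical, since the target representations $\cH^{n+j} \oplus \cH^{n+k}$ and $\cH^{n+j+k} \oplus \cH^{n}$ are not isomorphic as $G$-representations and the module structure never refers to the $G$-action on the target.
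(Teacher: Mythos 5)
Your proof is correct and is essentially the paper's own argument: the paper proves this by the same chain $\O^n_j \oplus \O^n_k = \cL(\Hn, \cH^{j+n}) \oplus \cL(\Hn, \cH^{k+n}) \cong \cL(\Hn, \,\cdot\,) \cong \cL(\Hn, \cH^{j+k+n}) \oplus \cL(\Hn, \cH^{n}) = \O^n_{j+k} \oplus \O^n_0$, resting on exactly your observation that the right $\Bn$-action is precomposition on the common source $\Hn$, so the module depends on the target only through its dimension. Your explicit count $(n+j+1)+(n+k+1) = (n+j+k+1)+(n+1)$ just makes precise what the paper compresses into its one displayed chain of isomorphisms.
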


\begin{proof}
We have
\begin{align*}
\O^n_j \oplus \O^n_k  &=  \cL(\cH^n, \cH^{j+n}) 
\oplus  \cL(\cH^n, \cH^{k+n})    \\
& \cong \cL(\cH^n, \cH^{j+k+2n}) 
\cong  \cL(\cH^n, \cH^{j+k+n}) 
\oplus  \cL(\cH^n, \cH^{n})     \\
& = \O^n_{j+k} \oplus \O^n_0 . 
\end{align*}
\end{proof}

But if one wants to show these correspondences by using
projections associated to the projective modules, there
are substantial complications. Let us consider the general case first.

Let $\cA$ and $\cB$ be unital C*-algebras, and let $\Pi = (\cD, \o)$ be 
a bridge from $\cA$ to $\cB$. Let $\{L^\cA_n\}$ and
$\{L^\cB_n\}$ be matrix Lip-norms on $\cA$ and $\cB$ 
(as defined in Definition \ref{defmlip}). For a given
$d$ we can use $L^\cA_d$ and
$L^\cB_d$ to measure the length of $\Pi_d$. 
One significant difficulty
is that it seems to be hard in general 
to obtain an upper bound for the
length of $\Pi_d$ in terms of the length of $\Pi$ (though
we will see that
for the case of the ``bridges with conditional expectation''
that are discussed in \cite{R29} we can get some useful
information). But the following little result will be
useful below.

\begin{prop}
\label{prpineq}
Let $\Pi = (\cD, \o)$ be 
a bridge from $\cA$ to $\cB$, and let
$\{L^\cA_n\}$ and
$\{L^\cB_n\}$ be matrix slip-norms on $\cA$ and $\cB$,
used to measure the length of $\Pi_d$ for any $d$. 
If $e$ is another natural number such that
$d < e$ then $r_{\Pi_d}  \leq  r_{\Pi_e}$. 
\end{prop}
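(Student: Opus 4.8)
The plan is to exploit the block structure relating $M_d$ to $M_e$ and to show that each of the two directed Hausdorff distances defining the reach can only grow when we pass from $d$ to $e$. Write $e = d + (e-d)$ and decompose everything into $2\times 2$ block form accordingly. The first observation is that under this decomposition $\o_e = I_e \otimes \o$ becomes the block-diagonal operator $\diag(\o_d, \o_{e-d})$ in $M_d(\cD)$'s enlargement $M_e(\cD)$. Consequently, writing $\tilde X = \bigl(\begin{smallmatrix} X & 0 \\ 0 & 0 \end{smallmatrix}\bigr) \in M_e(\cdot)$ for the block embedding of $X \in M_d(\cdot)$ (with a zero $M_{e-d}$-corner), we get $\tilde A\,\o_e = \bigl(\begin{smallmatrix} A\o_d & 0 \\ 0 & 0 \end{smallmatrix}\bigr)$ and $\o_e\,\tilde B = \bigl(\begin{smallmatrix} \o_d B & 0 \\ 0 & 0 \end{smallmatrix}\bigr)$. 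By property (2) of Definition \ref{defmtx} we have $L^\cA_e(\tilde A) = \max(L^\cA_d(A),\,0) = L^\cA_d(A)$, and $\tilde A$ is self-adjoint whenever $A$ is; thus the block embedding carries $\cL^1_{M_d(\cA)}$ into $\cL^1_{M_e(\cA)}$, and likewise for $\cB$.

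First I would bound the directed distance from $\cL^1_{M_d(\cA)}\,\o_d$ to $\o_d\,\cL^1_{M_d(\cB)}$. Fix $A \in \cL^1_{M_d(\cA)}$ and pass to its embedding $\tilde A$. Given any $B' \in \cL^1_{M_e(\cB)}$, let $B_{11} \in M_d(\cB)$ be its top-left block. Compressing with the coordinate inclusions $\a = [\,I_d\ 0\,] \in M_{de}$ and $\b = \a^* \in M_{ed}$, property (1) of Definition \ref{defmtx} gives $L^\cB_d(B_{11}) = L^\cB_d(\a B' \b) \le \|\a\|\,L^\cB_e(B')\,\|\b\| \le 1$, while self-adjointness of $B'$ passes to $B_{11}$; hence $B_{11} \in \cL^1_{M_d(\cB)}$. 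Since the top-left block of $\tilde A\,\o_e - \o_e B'$ is exactly $A\o_d - \o_d B_{11}$, and the norm of an operator block-matrix dominates the norm of any of its diagonal blocks, we obtain
\[
\|\tilde A\,\o_e - \o_e B'\| \ \ge\ \|A\o_d - \o_d B_{11}\| \ \ge\ \inf_{B \in \cL^1_{M_d(\cB)}} \|A\o_d - \o_d B\|.
\]
Taking the infimum over $B'$ and then the supremum over $A$ — whose embeddings $\tilde A$ form a subset of $\cL^1_{M_e(\cA)}$ — shows that this directed distance for $\Pi_d$ is at most the corresponding one for $\Pi_e$.

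The remaining directed distance is handled by the identical argument with the roles of $\cA$ and $\cB$ interchanged: embed $B \in \cL^1_{M_d(\cB)}$ as $\tilde B$, compress an arbitrary $A' \in \cL^1_{M_e(\cA)}$ to its principal $d\times d$ block, and again invoke block-domination of norms together with property (1). Taking the maximum of the two directed distances then yields $r_{\Pi_d} \le r_{\Pi_e}$, since by Definition \ref{reach} the reach is precisely the symmetric Hausdorff distance. The only delicate points — and the natural place for the argument to go astray if one is not careful — are the two uses of the matrix-slip-norm axioms: property (2), to see that the block embedding preserves the slip-norm so that the larger unit balls genuinely contain the images of the smaller ones; and property (1) applied to the compression maps, to see that cutting a competitor $B'$ (or $A'$) down to its principal block never increases its slip-norm. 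Everything else reduces to the elementary fact that compressing a block operator matrix to a diagonal block does not increase its operator norm.
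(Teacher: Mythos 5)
Your proof is correct and follows essentially the same route as the paper's: embed the $d\times d$ data as a zero-padded block in size $e$ (using property (2) of Definition \ref{defmtx}), compress a competitor in $\cL^1_{M_e(\cB)}$ to its principal $d\times d$ block (using property (1)), and observe that the block-diagonal structure of $\o_e$ together with norm-domination of diagonal blocks yields the inequality for each directed distance. The only cosmetic difference is that the paper picks a single $\d$-near-optimal competitor $C$ rather than taking an infimum over all $B'$, which changes nothing of substance.
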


\begin{proof}
Let $A \in M_d(\cA)$ with $A^* = A$ and $L^\cA_d(A) \leq 1$,
so that $A \in \cL^1_d(\cA)$. Then 
$(\begin{smallmatrix}
A & 0 \\
0 & 0
\end{smallmatrix})$, with the $0$'s of correct sizes, is 
in $\cL^1_e(\cA)$ by property (2) of Definition \ref{defmtx}.
Let $\d > 0$ be given. Then by the definition of $r_{\Pi_e}$
there is a $C \in \cL^1_e(\cB)$ such that 
\[
\|
\begin{bmatrix}
A & 0 \\
0 & 0
\end{bmatrix}
\begin{bmatrix}
\o_d & 0 \\
0 & \o_{e-d}
\end{bmatrix}
\ - \
\begin{bmatrix}
\o_d & 0 \\
0 & \o_{e-d}
\end{bmatrix}
C\| \leq r_{\Pi_e} + \d   .
\]
Compress the entire term inside the norm symbols by the
matrix $E = 
(\begin{smallmatrix}
I_d & 0 \\
0 & 0
\end{smallmatrix})$, and define $B$ by $ECE = 
(\begin{smallmatrix}
B & 0 \\
0 & 0
\end{smallmatrix})$ to obtain
\[
\|A\o_d - \o_d B\| \leq r_{\Pi_e} + \d   .
\]
Note that $B^* = B$ and that $L^\cB_d(B) \leq 1$
by property (1) of Definition \ref{defmtx}, so that
$B \in \cL^1_d(\cB)$. Since $\d$ is arbitrary, it follows
that the distance from $A\o_d$ to $\o_d \cL^1_d(\cB)$
is no greater than $r_{\Pi_e}$. In the same way we
show that for any $B \in \cL^1_d(\cB)$ there is an
$A \in \cL^1_d(\cA)$ such that the distance 
from $\o_d B$ to $ \cL^1_d(\cA)\o_d$
is no greater than $r_{\Pi_e}$.
It follows that $r_{\Pi_d}  \leq  r_{\Pi_e}$ as desired.
\end{proof}

For any natural number $d$ 
let $N_d$ be the seminorm on $M_d(\cA) \oplus M_d(\cB)$
defined much as done shortly before equation \ref{eqlr}, by
\[
N_d(A, B) = \|A\o_d - \o_d B\|
\]
for $A \in M_d(\cA)$ and $B \in M_d(\cB)$. Suppose now
that for natural numbers $d$ and $e$ we have
$a_1 \in M_d(\cA)$ and $a_2 \in M_e(\cA)$ as well as
$b_1 \in M_d(\cB)$ and $b_2 \in M_e(\cB)$, so that
\[
\begin{bmatrix}
a_1 & 0 \\
0 & a_2
\end{bmatrix}
\in M_{d+e}(\cA)
\quad \quad \mathrm{and} \quad \quad
\begin{bmatrix}
b_1 & 0 \\
0 & b_2
\end{bmatrix}
\in M_{d+e}(\cB)   .
\]
Then
\begin{align*}
N_{d+e}(
\begin{bmatrix}
a_1 & 0 \\
0 & a_2
\end{bmatrix}
 \ , \ 
\begin{bmatrix}
b_1 & 0 \\
0 & b_2
\end{bmatrix}
) &=
\| \begin{bmatrix}
a_1\o_d - \o_d b_1 & 0 \\
0 & a_2 \o_e - \o_e b_2
\end{bmatrix} \|      \\
&= N_d(a_1, b_1) \vee N_e(a_2, b_2)   .
\end{align*}
Next, much as done shortly before equation \ref{eqlr},
for each $d$ 
we define a $*$-seminorm, $\hat N_d$, by
\[
\hat N_d(A, B) = N_d(A,B) \vee N_d(A^*, B^*)   ,
\]
for $A \in M_d(\cA)$ and $B \in M_d(\cB)$. It then follows
from the calculation done just above that 
\begin{align*}
\hat N_{d+e}(
\begin{bmatrix}
a_1 & 0 \\
0 & a_2
\end{bmatrix}
 \ , \ 
\begin{bmatrix}
b_1 & 0 \\
0 & b_2
\end{bmatrix}
) 
= \hat N_d(a_1, b_1) \vee \hat N_e(a_2, b_2)   ,
\end{align*}
for $a_1 \in M_d(\cA)$ and $a_2 \in M_e(\cA)$, and
$b_1 \in M_d(\cB)$ and $b_2 \in M_e(\cB)$.

Next, assume that $r_{\Pi_d} < \infty$ for each $d$, as
is the case for matrix Lip-norms as seen in Proposition
\ref{propfr}.
Let some 
choice of finite $r_d \geq r_{\Pi_d}$ be given for each $d$.
Set, much as in equation \ref{eqlr},
\begin{equation*}
\label{eqlr2}
L^{r_d}_d(A,B) = 
L_d^\cA(A) \vee L_d^\cB(B) \vee r_d^{-1} \hat N_{\Pi_d}(A, B)   
\end{equation*}
for $A \in M_d(\cA)$ and $B \in M_d(\cB)$. It then follows
from the calculations done above that 
\[
L^{r_{d+e}}_{d+e}(
\begin{bmatrix}
a_1 & 0 \\
0 & a_2
\end{bmatrix}
 \ , \ 
\begin{bmatrix}
b_1 & 0 \\
0 & b_2
\end{bmatrix}
) 
= L^{r_{d+e}}_d(a_1, b_1) \vee L^{r_{d+e}}_e(a_2, b_2)   ,
\]
for $a_1 \in M_d(\cA)$ and $a_2 \in M_e(\cA)$, and
$b_1 \in M_d(\cB)$ and $b_2 \in M_e(\cB)$. Because
$r_{\Pi_{d+e}} \geq r_{\Pi_d} \vee r_{\Pi_e}$ according
to Proposition \ref{prpineq}, one can check quickly 
that  $L^{r_{d+e}}_d$ is admissible (Definition \ref{admisss}) 
for $L^\cA_d$ 
and $L^\cB_d$, and similarly for $L^{r_{d+e}}_e$.

Suppose now that $p_1 \in M_d(\cA)$ and $p_2 \in M_e(\cA)$
are projections representing projective $\cA$-modules, and
that $q_1 \in M_d(\cB)$ and $q_2 \in M_e(\cB)$ are projections
representing projective $\cB$-modules. Then
\begin{equation*}
L^{r_{d+e}}_{d+e}(
\begin{bmatrix}
p_1 & 0 \\
0 & p_2
\end{bmatrix}
 \ , \ 
\begin{bmatrix}
q_1 & 0 \\
0 & q_2
\end{bmatrix}
) 
= L^{r_{d+e}}_d(p_1, q_1) \vee L^{r_{d+e}}_e(p_2, q_2)   .
\end{equation*}
From Theorem \ref{th4.5}a we then obtain:

\begin{prop}
\label{prpvee}
Let notation be as just above, and assume that $l_{\Pi_{d+e}} < \infty$. 
If
\[
(h_{\Pi_{d+e}} + r_{d+e})
\max\{(L^{r_{d+e}}_d(p_1, q_1), L^{r_{d+e}}_e(p_2, q_2))\} < 1/2    ,
\]
and if there is a projection $Q \in M_{d+e}(\cB)$ such
that 
\begin{equation*}
\label{eqsum}
(h_{\Pi_{d+e}} + r_{d+e})
L^{r_{d+e}}_{d+e}(
\begin{bmatrix}
p_1 & 0 \\
0 & p_2
\end{bmatrix}
 \ , \ 
Q) < 1/2    ,
\end{equation*}
then there is a path through projections in 
$M_{m+n}(\cB)$ going from $Q$ to 
$(\begin{smallmatrix}
q_1 & 0 \\
0 & q_2
\end{smallmatrix})
   .$
\end{prop}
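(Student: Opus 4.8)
The plan is to read this off directly from part a) of Theorem \ref{th4.5}, applied to the matricial bridge $\Pi_{d+e}$ between $M_{d+e}(\cA)$ and $M_{d+e}(\cB)$, measured by the slip-norms $L^\cA_{d+e}$ and $L^\cB_{d+e}$. In the notation of that theorem I would let the block-diagonal projection $(\begin{smallmatrix} p_1 & 0 \\ 0 & p_2 \end{smallmatrix})$ play the role of $p$, let $(\begin{smallmatrix} q_1 & 0 \\ 0 & q_2 \end{smallmatrix})$ play the role of $q$, and let $Q$ play the role of the second projection (called $q_1$ there), with the seminorm $L^r$ taken to be $L^{r_{d+e}}_{d+e}$. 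This is legitimate because, as noted in the remark following Proposition \ref{prpineq}, $r_{d+e} \geq r_{\Pi_{d+e}}$, so $L^{r_{d+e}}_{d+e}$ is admissible for $L^\cA_{d+e}$ and $L^\cB_{d+e}$, and $l_{\Pi_{d+e}} < \infty$ is assumed.

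First I would verify the two hypotheses of Theorem \ref{th4.5}a. For the pair $p$ and $q$ I invoke the $\vee$-additivity identity established in the display immediately preceding the statement, namely
\[
L^{r_{d+e}}_{d+e}\left(
\begin{bmatrix} p_1 & 0 \\ 0 & p_2 \end{bmatrix},
\begin{bmatrix} q_1 & 0 \\ 0 & q_2 \end{bmatrix}
\right)
= L^{r_{d+e}}_d(p_1, q_1) \vee L^{r_{d+e}}_e(p_2, q_2).
\]
Multiplying by $(h_{\Pi_{d+e}} + r_{d+e})$ and using the first displayed inequality in the hypothesis then yields exactly $(h_{\Pi_{d+e}} + r_{d+e})\,L^{r_{d+e}}_{d+e}(p,q) < 1/2$, which is the first hypothesis of the theorem. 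The second hypothesis, $(h_{\Pi_{d+e}} + r_{d+e})\,L^{r_{d+e}}_{d+e}(p,Q) < 1/2$, is precisely the second displayed inequality. With both in hand, Theorem \ref{th4.5}a produces a continuous path through projections in $M_{d+e}(\cB)$ joining $(\begin{smallmatrix} q_1 & 0 \\ 0 & q_2 \end{smallmatrix})$ to $Q$; reversing the parameter gives the asserted path from $Q$ to the block-diagonal projection.

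I expect no deep obstacle here, since essentially all of the content has been front-loaded into the $\vee$-additivity identity for $L^{r_{d+e}}_{d+e}$ on block-diagonal elements and into the monotonicity $r_{\Pi_{d+e}} \geq r_{\Pi_d} \vee r_{\Pi_e}$ of Proposition \ref{prpineq}. The one point that genuinely must be checked before invoking Theorem \ref{th4.5}a is that $L^\cA_{d+e}$ and $L^\cB_{d+e}$ meet its standing hypotheses of lower semi-continuity and the strong Leibniz property. Lower semi-continuity is built into Definition \ref{defmlip}, whereas the strong Leibniz property is not part of the matrix-slip-norm axioms as such; it holds, however, in the C*-metric setting of interest, in particular for the group-action matrix Lip-norms of Section \ref{secalg}, and so is available in the situation to which this proposition will be applied.
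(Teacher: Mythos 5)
Your proposal is correct and follows essentially the same route as the paper, whose entire proof consists of establishing the block-diagonal identity $L^{r_{d+e}}_{d+e}\bigl(\bigl(\begin{smallmatrix} p_1 & 0 \\ 0 & p_2 \end{smallmatrix}\bigr), \bigl(\begin{smallmatrix} q_1 & 0 \\ 0 & q_2 \end{smallmatrix}\bigr)\bigr) = L^{r_{d+e}}_d(p_1, q_1) \vee L^{r_{d+e}}_e(p_2, q_2)$ in the preceding paragraphs and then invoking Theorem \ref{th4.5}a exactly as you do, with $Q$ in the role of the second projection. Your closing remarks on admissibility (via $r_{d+e} \geq r_{\Pi_{d+e}}$ and Proposition \ref{prpineq}) and on the lower semi-continuity and strong Leibniz hypotheses of Theorem \ref{th4.5} only make explicit standing assumptions the paper leaves implicit, and do not change the argument.
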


This uniqueness result means that $(\begin{smallmatrix}
q_1 & 0 \\
0 & q_2
\end{smallmatrix})$
is a projection in $M_{d+e}(\cB)$ corresponding to the
projection
$(\begin{smallmatrix}
p_1 & 0 \\
0 & p_2
\end{smallmatrix})$   .
Thus the consequence of this proposition is
that for suitable bounds, if the projective modules
for $p_1$ and $q_1$ correspond, and if those for
$p_2$ and $q_2$ correspond, then the direct sum
modules correspond. This suggests a further reason
for saying that when
considering complex vector bundles over the 2-sphere
it suffices for our purposes to consider only the 
line bundles. 

The difficulty with using this proposition is that I have not
found a good way 
of bounding in general the reach and height of $\Pi_d$ in terms of
of those of $\Pi$. However, the specific bridges that we
have been using for matrix algebras converging to the
sphere are examples of ``bridges with conditional
expectations'', as defined in \cite{R29}. For
such a bridge, bounds are obtained in terms
of the conditional expectations. More specifically,
there is a constant, $\mathring{\g}_\Pi$, (equal to 
$2 \max \{\g^\cA, \g^\cB\}$ in the notation of 
theorem 5.4 of \cite{R29}) such 
that 
\[
r_{\Pi_d} \leq d \mathring{\g}_\Pi
\] 
for all $d$,
and there is a constant, $\mathring{\d}_\Pi$,
(which in the notation of 
theorem 5.4 of \cite{R29} is
equal to $2 \max \{ \min\{\d^\cA, \hat \d^\cA\}, 
\min \{\d^\cB, \hat \d^\cB\}$  ) such that
\[
h_{\Pi_d} \leq d \mathring{\d}_\Pi
\]
for all $d$.
Then if we choose $s \geq  \mathring{\g}_\Pi$ we have
\[
(d+e)s \geq (d+e) \mathring{\g}_\Pi 
\geq r_{\Pi_{d+e}}   ,
\]
so we can set $r_{d+e} = (d+e)s$ and apply
Proposition \ref{prpvee} to obtain:

\begin{prop}
\label{prpexp}
Assume that $\Pi$ is a bridge with conditional expections, 
and let notation 
be as just above. Choose $s \geq \mathring{\g}_\Pi$. 
If
\[
(d+e)(\mathring{\d}_\Pi + s)
\max\{(L^{(d+e)s}_d(p_1, q_1), L^{(d+e)s}_e(p_2, q_2))\} < 1/2    ,
\]
and if there is a projection $Q \in M_{d+e}(\cB)$ such
that 
\begin{equation}
\label{eqexp}
(d+e)(\mathring{\d}_\Pi + s)
L^{(d+e)s}_{d+e}(
\begin{bmatrix}
p_1 & 0 \\
0 & p_2
\end{bmatrix}
 \ , \ 
Q) < 1/2    ,
\end{equation}
then there is a path through projections in 
$M_{d+e}(\cB)$ going from $Q$ to 
$(\begin{smallmatrix}
q_1 & 0 \\
0 & q_2
\end{smallmatrix})
   .$
\end{prop}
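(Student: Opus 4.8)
The plan is to obtain this proposition as a direct specialization of Proposition \ref{prpvee}: I would simply verify that the hypotheses of that proposition are met under the assumptions made here, and then invoke it verbatim. The only new ingredient is the pair of explicit bounds recalled just above the statement, valid because $\Pi$ is a bridge with conditional expectations, namely $r_{\Pi_{d+e}} \leq (d+e)\mathring{\g}_\Pi$ and $h_{\Pi_{d+e}} \leq (d+e)\mathring{\d}_\Pi$. In particular both reach and height of $\Pi_{d+e}$ are finite, so $l_{\Pi_{d+e}} = \max\{r_{\Pi_{d+e}}, h_{\Pi_{d+e}}\} < \infty$, which is the standing finiteness hypothesis required by Proposition \ref{prpvee}.

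First I would fix the choice $r_{d+e} = (d+e)s$. Since $s \geq \mathring{\g}_\Pi$ we have $(d+e)s \geq (d+e)\mathring{\g}_\Pi \geq r_{\Pi_{d+e}}$, so this is an admissible value for defining the seminorm $L^{(d+e)s}_{d+e}$, exactly as observed in the paragraph preceding the statement; with this choice the seminorm $L^{r_{d+e}}_{d+e}$ of Proposition \ref{prpvee} is literally the $L^{(d+e)s}_{d+e}$ appearing here, and likewise for its diagonal restrictions $L^{(d+e)s}_d$ and $L^{(d+e)s}_e$. The one numerical step is then to replace the bridge quantity $h_{\Pi_{d+e}} + r_{d+e}$ by the explicit factor in the hypotheses: combining the two displayed bounds gives
\[
h_{\Pi_{d+e}} + r_{d+e} \leq (d+e)\mathring{\d}_\Pi + (d+e)s = (d+e)(\mathring{\d}_\Pi + s).
\]
Because all the seminorm values involved are nonnegative, each of the two inequalities assumed in the present proposition dominates the corresponding inequality demanded by Proposition \ref{prpvee}: multiplying a nonnegative seminorm value by $h_{\Pi_{d+e}} + r_{d+e}$ produces something no larger than multiplying it by $(d+e)(\mathring{\d}_\Pi + s)$, which is $< 1/2$ by hypothesis. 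Thus both the ``$\max$'' condition and the condition \eqref{eqexp} on $Q$ transfer from this proposition to Proposition \ref{prpvee}.

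Having checked every hypothesis, I would apply Proposition \ref{prpvee} directly to produce the asserted path through projections in $M_{d+e}(\cB)$ from $Q$ to $\diag(q_1, q_2)$, which completes the argument. There is no genuine obstacle here: the substantive work lies upstream, in the finiteness and linear-in-$d$ growth of the reach and height for bridges with conditional expectations (theorem 5.4 of \cite{R29}) and in the block-diagonal compatibility of the $L^{r}_d$ seminorms that underlies Proposition \ref{prpvee}. The present proposition merely makes the abstract requirements $r_{d+e} \geq r_{\Pi_{d+e}}$ and $h_{\Pi_{d+e}} + r_{d+e} \leq (d+e)(\mathring{\d}_\Pi + s)$ concrete in terms of the constants $\mathring{\g}_\Pi$ and $\mathring{\d}_\Pi$.
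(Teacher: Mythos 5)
Your proposal is correct and is essentially the paper's own argument: the paper likewise sets $r_{d+e} = (d+e)s$, uses the bounds $r_{\Pi_{d+e}} \le (d+e)\mathring{\g}_\Pi$ and $h_{\Pi_{d+e}} \le (d+e)\mathring{\d}_\Pi$ for bridges with conditional expectations to get $h_{\Pi_{d+e}} + r_{d+e} \le (d+e)(\mathring{\d}_\Pi + s)$, and then applies Proposition \ref{prpvee} directly. Your explicit check that $l_{\Pi_{d+e}} < \infty$ is a correct verification of the standing hypothesis that the paper leaves implicit.
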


Thus again, if the inequality \ref{eqexp} is satisfied,
then  if the projective modules
for $p_1$ and $q_1$ correspond, and if those for
$p_2$ and $q_2$ correspond, then the direct sum
modules correspond. But notice that the factor
$d+e$ at the beginning means that as $d$ 
or $e$ get bigger, the remaining term must
be smaller in order for the product to be
$< 1/2$. Thus, for example, suppose that
$p$ and $q$ correspond. This will not in
general imply that 
$(\begin{smallmatrix}
p & 0 \\
0 &  0_e
\end{smallmatrix})$  
and
$(\begin{smallmatrix}
q & 0 \\
0 &  0_e
\end{smallmatrix})$ 
correspond, 
where $0_e$ denotes the $0$ matrix of size $e$.

We can now apply the above results to our basic
example in which $\cA = C(G/H)$ and $\cB^n = \cL(\cH^n)$,
and we have the bridge $\Pi^n$ between them, 
as in the previous section
and earlier.
From the discussion leading to propositions 6.3
and 6.7 of \cite{R29}, which is strongly based on
the results of \cite{R21}, it can be seen that
the constants $\mathring{\g}_{\Pi^n}$ 
and $\mathring{\d}_{\Pi^n}$ converge to 0
as $n \to \infty$. If in Proposition \ref{prpexp}
one sets $s_n=\max \{ \mathring{\g}_{\Pi^n},
\mathring{\d}_{\Pi^n} \}$, then for fixed
$d$ and $e$ the term
$(d+e)(\mathring{\d}_\Pi^n + s_n)$ 
in inequality \ref{eqexp} will converge
to 0 as $n \to \infty$. Consequently, for projections
$p_1$ and $p_2$ as above, one can find a 
sufficiently large $N$ that if $n \geq N$
and if $q_1$ and $q_2$ are corresponding
projections in $M_d(\cB^n)$ and $M_e(\cB^n)$ 
respectively, then inequality \ref{eqexp} is
satisfied, so that 
$(\begin{smallmatrix}
p_1 & 0 \\
0 & p_2
\end{smallmatrix})$  
and 
$(\begin{smallmatrix}
q_1 & 0 \\
0 & q_2
\end{smallmatrix})$  
correspond.


\section{Appendix. Weights}
\label{secapp1}
We give here a precise statement of the conventions we
use concerning weights, weight vectors, etc., and of their
properties, including some proofs (all basically well-known, e.g.
in section VIII.4 of \cite{Smn}).

At first we assume only that $\cH$ is a finite-dimensional
vector space over $\bC$, and that $H$ is a 
non-zero operator
on $\cH$ while $E$ and $F$ are operators on $\cH$
satisfying the relations 
\[
[E,F] = H, \quad \quad [H, E] = 2E, \quad \quad 
\mathrm{and} \quad [H, F] = -2F  .
\]
Let $\xi$ be an eigenvector of $H$ with eigenvalue $r$. Then
\[
H(E\xi) = E(H\xi) + [H,E]\xi = rE\xi + 2E\xi = (r+2)E\xi  ,
\]
so that if $E\xi \neq 0$ then $E\xi$ is an eiginvector 
for $H$ of eigenvalue r+2. In the same way, $F\xi$, if $\neq 0$,
is an eigenvector of $H$ of eigenvalue $r-2$. (So $E$
and $F$ are often called ``ladder operators''.) Since $\cH$
is finite-dimensional, 
it follows that there must be an eigenvector, $\xi_*$, for $H$
such that $E\xi_* = 0$ (a highest weight). 
Fix such a $\xi_*$, and let $r$ be its
eigenvalue. Then for each natural number $a$
we see that $F^a\xi_*$ will be an eigenvector of 
eigenvalue $r-2a$ unless $F^a\xi_* = 0$. Since $\cH$ is
finite-dimensional, there will be a natural number $m$ such
that $F^a\xi_* \neq 0$ for $a \leq m$ but $F^{m+1}\xi_* = 0$.
Now $E(F\xi_*) = [E, F]\xi_* = r\xi_*$. In a similar way we
find by induction that
\begin{align*}
E(F^a\xi_*) &= EF(F^{a-1}\xi_*) = (H+FE)(F^{a-1}\xi_*)  
 \\  \nonumber
&= a(r-a+1)F^{a-1}\xi_*.
\end{align*}
Consequently, since $F^{m+1}\xi_* = 0$, we have
\[
H(F^m\xi_*) = -F(EF^m\xi_*) = -m(r-m+1)F^m\xi_*.
\]
But also $H(F^m\xi_*) = (r-2m)F^m\xi_*$. 
Since $F^m\xi_* \neq 0$, it follows that $r = m$, and
so it is appropriate to denote $\xi_*$ by $e_m$. With this
notation, set
\[
e_{m-2a} = F^a e_m
\]
for $a = 0, \dots, m$. Each of these vectors is an
eigenvector of $H$ with 
corresponding eigenvalue $m-2a$. These
eigenvectors span a subspace of $\cH$ of dimension $m+1$. 
From the calculations done
above, it is clear that this subspace is carried into itself
by the operators $H, E$ and $F$, and that furthermore, 
we have
\begin{equation}
    E(e_{m-2a}) = a(m-a+1)e_{m -2a+2}  .  \label{EFa}
\end{equation}
From this we immediately obtain
\[
FE(e_{m-2a}) = a(m-a+1)e_{m-2a}  .
\]
Since $EF = H+FE$, it then follows that
\begin{equation}
EF(e_{m-2a}) = (a-1)(m-a)e_{m-2a}   .
\end{equation}

Now assume that $\cH$ is a finite-dimensional Hilbert space, and 
let $e_m$ be a highest weight vector of weight $m$
as above, and  
for each $a = 1, \dots, m$ define 
$e_{m-2a}$ as above. Assume now that
$F = E^*$ (the adjoint of $E$).
This implies that $H$ is
self-adjoint, so that its eigenvectors of different eigenvalue are
orthogonal. Consequently the $e_{m-2a}$'s 
are orthogonal vectors, that span a subspace of $\cH$ that
is carried into itself by the operators $H$, $E$, and $F$ (giving
an irreducible unitary representation of $SU(2)$). 
Assume further that $\|e_m\| = 1$. 
We need to know the
norms of the vectors $e_{m-2a}$. 
Notice that from equation \eqref{EFa} 
we see that for $a \geq 1$ we have
\begin{align*}
\|e_{m-2a}\|^2 &= \<Fe_{m-2a+2}, e_{m-2a}\> 
= \<e_{m-2a+2}, Ee_{m-2a}\>    \\
&= a(m-a+1)\|e_{m-2a+2}\|^2   .
\end{align*}

A simple induction argument then shows that
\begin{equation}
\|F^ae_m\|^2 = \|e_{m-2a}\|^2 = a!\Pi_{b = 0}^{a-1} (m-b)
= \frac{a!m!}{(m-a)!}   
\label{keyeq}
\end{equation}
for $a = 1, \dots, m$. 
(We find it convenient not to normalize the
$e_m$'s to length 1.)




\def\dbar{\leavevmode\hbox to 0pt{\hskip.2ex \accent"16\hss}d}
\providecommand{\bysame}{\leavevmode\hbox to3em{\hrulefill}\thinspace}
\providecommand{\MR}{\relax\ifhmode\unskip\space\fi MR }
\providecommand{\MRhref}[2]{%
  \href{http://www.ams.org/mathscinet-getitem?mr=#1}{#2}
}
\providecommand{\href}[2]{#2}

\end{document}